\renewcommand{\epsilon}{\varepsilon}
\numberwithin{equation}{section}
\newtheoremstyle{thmlemcorr}{10pt}{10pt}{\itshape}{}{\bfseries}{.}{10pt}{{\thmname{#1}\thmnumber{
#2}\thmnote{ (#3)}}}
\newtheoremstyle{thmlemcorr*}{10pt}{10pt}{\itshape}{}{\bfseries}{.}\newline{{\thmname{#1}\thmnumber{
\newtheoremstyle{defi}{10pt}{10pt}{\itshape}{}{\bfseries}{.}{10pt}{{\thmname{#1}\thmnumber{
#2}\thmnote{ (#3)}}}
\newtheoremstyle{remexample}{10pt}{10pt}{}{}{\bfseries}{.}{10pt}{{\thmname{#1}\thmnumber{
#2}\thmnote{ (#3)}}}
\newtheoremstyle{ass}{10pt}{10pt}{}{}{\bfseries}{.}{10pt}{{\thmname{#1}\thmnumber{
A#2}\thmnote{ (#3)}}}
\theoremstyle{thmlemcorr}
\newtheorem{theorem}{Theorem}
\numberwithin{theorem}{section}
\newtheorem{lemma}[theorem]{Lemma}
\newtheorem{corollary}[theorem]{Corollary}
\newtheorem{proposition}[theorem]{Proposition}
\theoremstyle{thmlemcorr*}
\newtheorem{theorem*}{Theorem}
\newtheorem{lemma*}[theorem]{Lemma}
\newtheorem{corollary*}[theorem]{Corollary}
\newtheorem{proposition*}[theorem]{Proposition}
\newtheorem{problem*}[theorem]{Problem}
\newtheorem{conjecture*}[theorem]{Conjecture}
\theoremstyle{defi}
\newtheorem{definition}[theorem]{Definition}
\theoremstyle{remexample}
\newtheorem{remark}[theorem]{Remark}
\newtheorem{example}[theorem]{Example}
\theoremstyle{ass}
\newcommand{\Ccal}{\mathcal{C}}
\newcommand{\Fcal}{\mathcal{F}}
\newcommand{\Gcal}{\mathcal{G}}
\newcommand{\Ical}{\mathcal{I}}
\newcommand{\Jcal}{\mathcal{J}}
\newcommand{\Rcal}{\mathcal{R}}
\newcommand{\Tcal}{\mathcal{T}}
\newcommand{\Hbb}{\mathbb{H}}
\newcommand{\NN}{\mathbb{N}}
\newcommand{\Sbb}{\mathbb{S}}
\DeclareMathOperator*{\argmin}{arg\,min}
\DeclareMathOperator{\esssup}{ess\,sup}
\DeclareMathOperator{\essinf}{ess\,inf}
\DeclareMathOperator{\supp}{supp}
\newcommand{\norm}[1]{\|#1\|}
\newcommand{\normlr}[1]{\left\|#1\right\|}
\newcommand{\normB}[1]{\Bigl\|#1\Bigr\|}
\newcommand{\normBB}[1]{\biggl\|#1\biggr\|}
\newcommand{\abs}[1]{|#1|}
\newcommand{\absB}[1]{\Bigl|#1\Bigr|}
\newcommand{\absBB}[1]{\biggl|#1\biggr|}
\newcommand{\dprlr}[1]{\left\langle #1 \right\rangle}
\newcommand{\dd}{\;\mathrm{d}}
\newcommand{\N}{\mathbb{N}}
\newcommand{\R}{\mathbb{R}}
\newcommand{\RR}{\mathbb{R}}
\newcommand{\Z}{\mathbb{Z}}
\newcommand{\weakly}{\rightharpoonup}
\newcommand{\ffi}{\varphi}
\def\dd{{\rm d}}
\def\dx{{\rm d}x}
\def\dy{{\rm d}y}
\def\leq{\leqslant}
\def\geq{\geqslant}
\newcommand{\ucl}{u^{c}}
\newcommand{\uet}{u^{\eta}}
\newcommand{\ubet}{w^{(\beta)}}
 \definecolor{Korange}{rgb}{0.945,0.561,0}
 \newcommand{\dashint}{\fint}
\title[Structural changes through bi-level parameter learning]{Structural
changes in nonlocal denoising models arising through bi-level parameter
learning}
\author[Elisa Davoli]{Elisa Davoli}
\address{TU Wien, Institute of Analysis and Scientific Computing, Wiedner Hauptstrasse 8-10, 1040 Vienna, Austria}
\email{elisa.davoli@tuwien.ac.at}
\author[Rita Ferreira]{Rita Ferreira}
\address{King Abdullah University of Science and Technology (KAUST), CEMSE Division, Thuwal 23955-6900, Saudi Arabia}
\email{rita.ferreira@kaust.edu.sa}
\author[Carolin Kreisbeck]{Carolin Kreisbeck}
\address{Mathematisch-Geographische Fakult\"at, KU Eichst\"att-Ingolstadt, Ostenstrasse 26, 85072, Eichst\"att, Germany}
\email{carolin.kreisbeck@ku.de}
\author[Hidde Sch\"onberger]{Hidde Sch\"onberger}
\address{Mathematisch-Geographische Fakult\"at, KU Eichst\"att-Ingolstadt, Ostenstrasse 26, 85072, Eichst\"att, Germany}
\email{hidde.schoenberger@ku.de}
\begin{document}

 
\maketitle

 \begin{abstract}  
\vspace{-12pt} 

We introduce a unified framework based on bi-level optimization schemes to deal with parameter learning in the context of image processing. The goal is to identify the optimal regularizer within a family depending on a parameter in a general topological space. Our focus lies on the situation with non-compact parameter domains, which is, for example, relevant when the commonly used box constraints are disposed of. To overcome this lack of compactness, we propose a natural extension of the upper-level functional to the closure of the parameter domain via Gamma-convergence, which captures possible structural changes in the reconstruction model at the edge of the domain. Under two main assumptions, namely, Mosco-convergence of the regularizers and uniqueness of minimizers of the lower-level problem, we prove that the extension coincides with the relaxation, thus admitting minimizers that relate to the parameter optimization problem of interest. 
We apply our abstract framework to investigate a quartet of practically relevant models in image denoising, all featuring nonlocality. The associated families of regularizers exhibit qualitatively different parameter dependence, describing a weight factor, an amount of nonlocality, an integrability exponent, and a fractional order, respectively. After the asymptotic analysis that determines the relaxation in each of the four settings, we finally establish theoretical conditions on the data that guarantee structural stability of the models and give examples of when stability is lost.

\vspace{8pt}

 \noindent\textsc{MSC (2020):} 49J21, 49J45
 
 \noindent\textsc{Keywords:}  bi-level learning scheme, parameter optimization, $\Gamma$-convergence, nonlocal regularizers, image denoising models \color{black}
 
 \noindent\textsc{Date:} \today.
 \end{abstract}

 \section{Introduction}

\thispagestyle{empty}

One of the most widely used methods to solve image restoration problems is the variational regularization approach. This variational approach  consists of minimizing a reconstruction functional
that decomposes into a fidelity and a regularization terms, which give rise to competing effects. While the fidelity term ensures that the reconstructed image is close to the (noisy) data, the regularization term is designed to remove the noise by incorporating prior information on the clean image. 
In the case of a simple $L^2$-fidelity term, the reconstruction functional is given by
\[
\Jcal(u) = \norm{u-u^{\eta}}^2_{L^2(\Omega)} + \Rcal(u), \qquad \text{for $u \in L^2(\Omega)$},
\]
where $\Omega \subset \R^n$ is the image domain, $u^{\eta} \in L^2(\Omega)$ the noisy image, and $\Rcal:L^2(\Omega) \to [0,\infty]$ the regularizer.

A common choice for $\Rcal$ is the total variation ($TV$) regularization proposed by Rudin, Osher,  \& Fatemi \cite{ROF92}, which penalizes sharp oscillations, but does not exclude edge discontinuities, as they appear in most images. Since its introduction, the $TV$-model has inspired a variety of more advanced regularization terms,
like the infimal-convolution total variation ($ICTV$) \cite{ChL97}, the total generalized variation ($TGV$) \cite{BKP10}, and many more, cf.~\cite{BeB18} and the references therein. Due to the versatility of the variational formulation, regularizers of a completely different type can be used as well. Recently, a lot of attention has been directed towards regularizers incorporating nonlocal effects, such as those~induced by difference quotients \cite{AuKo09, GiOs08, BrNg18, BEPS11} and fractional operators \cite{Antiletal22, AnB17, AnR19}. Nonlocal regularizers have the advantage  of not requiring the existence of (full) derivatives, allowing to work with functions that are less regular than those in the local counterpart.

With an abundance of available choices, finding a suitable regularization term for a specific application is paramount for obtaining accurate reconstructions. This is often done by fixing a parameter-dependent family of regularizers and tuning the parameter in accordance with the noise and data. Carrying out this process via trial and error
can be hard and inefficient, which led to the development of a more structured approach in the form of bi-level optimization. We refer, e.g., to  \cite{DeScVa16,DeScVa17}
(see also \cite{ChPoRaBi13,ChRaPo14,Do12,TaLiAdFr07}) and to the references therein, as well as to \cite{DeZ20} for a detailed overview. 
The idea behind bi-level optimization is to employ a supervised learning scheme based on a set of training data consisting of noisy images and their corresponding clean versions. To determine an optimal parameter, one minimizes a selected cost functional which quantifies the error with respect to the training data. Overall, this results in a nested variational problem with upper- and lower-level optimization steps related to the cost and reconstruction functional, respectively.
Key aspects of the mathematical study of these 
bi-level learning schemes include establishing the existence of solutions and deriving optimality conditions, which lay the foundation for devising reliable numerical solution methods.

In recent years, there has been a rapid growth in the literature devoted to addressing the above questions.
To mention but a few examples, we first refer the paper \cite{HKB18} dealing with learning real-valued weight parameters in front of the regularization terms for a rather general class of inverse problems;
in \cite{ADK20,BaW20}, the authors   optimize the fractional parameter of a regularizer depending on the spectral fractional Laplacian;
spatially dependent weights are determined through training via other nonlocal bi-level schemes (e.g.,~inside the Gagliardo semi-norm \cite{HoK22} or in a type of fractional gradient \cite{DDM21}), and in classical $TV$-models \cite{ChDeSc17, HiR17, PPR21}; as done in \cite{DeSc13}, one can also learn the fidelity term instead of the regularizer.

A common denominator in the above references is the presence of certain a priori compactness constraints on the set of admissible parameters, such as box constraints like in \cite{HKB18}, where the weights are assumed to lie in some compact interval away from 0 and infinity. These conditions make it possible to prove stability of the lower-level problem and obtain existence of optimal parameters within a class of structurally equivalent regularizers. However, imposing artificial restrictions to the parameter range like these may lead to suboptimal results depending on the given training data.

It is then  substantial
to consider removing such constraints in order to work on maximal domains naturally associated with the parameters, which is also our focus in this  paper. An inherent effect of this approach is that qualitative changes in the structure of the regularizer may occur at the edges of the domain. If optimal parameters are attained at the boundary, this indicates that the chosen class of regularization terms is not well-suited to the training data. To exclude these degenerate cases, it is of interest to provide analytic conditions to guarantee that the optimal parameters are attained in the interior of the domain, thereby preserving the structure of the regularizer.
The first work to address the aforementioned tasks is \cite{DeScVa16} by De Los Reyes, Sch\"{o}nlieb,  \&  Valkonen, where optimization is carried out for  weighted sums of local regularizers of different type with each weight factor allowed to take any value in $[0,\infty]$.  
As such, their bi-level scheme is able to encompass multiple regularization structures at once, like $TV$ and $TV^2$ and their interpolation $TGV$. 
Similarly, the authors in \cite{LiS19}  vary the weight factor in the whole range $[0,\infty]$ as well as the underlying finite-dimensional norm of the total variation regularizer.
We also mention \cite{DaL18}, where the order of a newly introduced nonlocal counterpart of the $TGV$-regularizer is tuned, and \cite{DFL19}, which studies a bi-level scheme covering the cases of $TV$, $TGV^2$, and $NsTGV^2$ in a comprehensive way.\medskip 

In this paper, we introduce a unified framework to deal with parameter learning beyond structural stability in the context of bi-level optimization schemes.
In contrast to the above references, where the analysis is tailored to a specifically chosen type of parameter dependence, our regularizers can exhibit a general dependence on parameters in a topological space.
Precisely, we consider a parametrized family of regularizers $\Rcal_{\lambda}:L^2(\Omega) \to [0,\infty]$ with $\lambda$ ranging over a subset $\Lambda$ of a topological space $X$, which is assumed to be first countable. If we focus for brevity on a single data point $(u^c,u^\eta) \in L^2(\Omega)\times L^2(\Omega)$, with $u^c$ and $u^\eta$ the clean and noisy images (see Section~\ref{sec:general} for larger data sets), 
the bi-level optimization problem reads:
\begin{align*}
\begin{split}
({\rm Upper\text{-}level}) \qquad& \text{Minimize} \ \ \Ical(\lambda):=\inf_{w \in K_{\lambda}}\norm{w-u^c}^2_{L^2(\Omega)} \ \ \text{over $\lambda \in \Lambda$,}\\
({\rm Lower\text{-}level}) \qquad& K_{\lambda}:=\argmin_{u \in L^2(\Omega)}\Jcal_{\lambda}(u)
,\qquad
\end{split}
\end{align*}
where $\Jcal_{\lambda}(u):=\norm{u-u^{\eta}}^2_{L^2(\Omega)} + \Rcal_{\lambda}(u)$ is the reconstruction functional.

Our approach for studying this general bi-level learning scheme relies on asymptotic tools from the calculus of variations. We define a suitable notion of stability for the lower-level problems that requires the family of functionals $\{\Jcal_{\lambda}\}_{\lambda \in \Lambda}$ to be closed under taking $\Gamma$-limits; see \cite{Dal93, Bra02} for a comprehensive introduction on $\Gamma$-convergence. Since $\Gamma$-convergence ensures the convergence of sequences of minimizers, one can conclude that, in the presence of stability, the upper-level functional $\Ical$ admits a minimizer (Theorem~\ref{theo:suff}).  

A different strategy is required to obtain the existence of solutions when stability fails. Especially relevant here is the case of real-valued parameters when box constraints are disposed of and 
 non-closed intervals $\Lambda$ are considered; clearly, stability is then lost for the simple fact that a sequence of parameters can converge to the boundary of $\Lambda$. To overcome this issue, we propose a natural extension $\overline{\Ical}:\overline{\Lambda} \to [0,\infty]$ of $\Ical$, now defined on the closure of our parameter domain, and identified via $\Gamma$-convergence of the lower-level functionals. Precisely, 
\begin{align*}
\begin{split}
({\rm Upper\text{-}level}) \qquad& \text{Minimize} \ \ \overline{\Ical}(\lambda):=\inf_{w \in \overline{K}_\lambda}\norm{w-u^c}^2_{L^2(\Omega)} \ \ \text{over $\lambda \in \overline{\Lambda}$,}\qquad \ \ \\
({\rm Lower\text{-}level}) \qquad& \overline{K}_\lambda:= \argmin_{u \in L^2(\Omega)}\overline{\Jcal}_{\lambda}(u),
\end{split}
\end{align*}
where the functionals $\overline{\Jcal}_\lambda :L^2(\Omega)\to [0,\infty]$ are characterized as $L^2$-weak $\Gamma$-limits (if they exist) of functionals $\Jcal_{\lambda'}$ with $\lambda' \to \lambda$. To justify the choice of this particular extension, we derive an intrinsic connection with relaxation theory in the calculus of variations (for an introduction, see, e.g.,~\cite[Chapter~9]{Dac08} and the references therein). Explicitly, the relaxation of the upper-level functional $\Ical$ is given by its lower semicontinuous envelope (after the trivial extension to $\overline{\Lambda}$ by $\infty$), 
\[
\Ical^{\rm rlx}(\lambda):=\inf\Big\{\liminf_{k\to \infty} \Ical (\lambda_k): (\lambda_k)_k\subset \Lambda, \lambda_k\to \lambda \text{ in $\overline{\Lambda}$}\Big\} \enspace \text{ for \(\lambda\in\overline\Lambda\)}.
\]
This relaxed version of $\Ical$ has the desirable property that it admits a minimizer (if $\overline{\Lambda}$ is compact) and minimizing sequences of $\Ical$ have subsequences that converge to an optimal parameter of $\Ical^{\rm rlx}$. Our main theoretical result (Theorem~\ref{theo:relax2}) shows that the extension $\overline{\Ical}$ coincides with the relaxation $\Ical^{\rm rlx}$ under suitable assumptions and therefore inherits the same properties (cf.~Corollary~\ref{cor:relax}). 

Besides the generic conditions that each $\Rcal_{\lambda}$ is weakly lower semicontinuous and has non-empty domain (see~\eqref{eq:assumptions}), which ensure that $\Jcal_{\lambda}$ possesses a minimizer, we work under two main assumptions:
\begin{itemize}
\item[$(i)$] The Mosco-convergence of the regularizers, i.e., $\Gamma$-convergence with respect to the strong and weak $L^2$-topology, and

\item[$(ii)$]  the uniqueness of minimizers of $\overline{\Jcal}_{\lambda}$ for $\lambda \in \overline{\Lambda} \setminus \Lambda$.
\end{itemize} 
We demonstrate in Example~\ref{ex:optimal} that these assumptions are in fact optimal. Due to $(i)$, the $\Gamma$-limits $\overline{\Jcal}_{\lambda}$ preserve the additive decomposition into the $L^2$-fidelity term and a regularizer, 
and coincide with $\Jcal_{\lambda}$ inside $\Lambda$. As a consequence of the latter, it follows that $\overline{\Ical}=\Ical$ in $\Lambda$, making $\overline{\Ical}$ a true extension of $\Ical$. For the parameter values at the boundary, $\lambda \in \overline{\Lambda}\setminus \Lambda$, however, the regularizers present in $\overline{\Jcal}_{\lambda}$ can have a completely different structure from the family of regularizers $\{\Rcal_{\lambda}\}_{\lambda \in \Lambda}$ that we initially started with.  When the optimal parameter of the extended problem is attained inside $\Lambda$, one recovers instead a solution to the original training scheme, yielding structure preservation. For a discussion on related results in the context of optimal control problems \cite{BuD82,But87,BBF93}, we refer to the end of Section~\ref{sec:general}. \medskip

To demonstrate the applicability of our abstract framework, we investigate 
a quartet of practically relevant scenarios 
with 
families of nonlocal regularizers that induce qualitatively different structural changes; namely, learning the optimal weight, varying the amount of nonlocality, optimizing the integrability exponent, and tuning the fractional parameter. More precisely, in all these four applications, our starting point is a non-closed real interval $\Lambda\subset [-\infty,\infty]$ and we seek to determine the extension $\overline{\Ical}$ on the closed interval $\overline{\Lambda}$, which admits a minimizer by the theory outlined above. The first step is to calculate the  Mosco-limits of the regularizers, which reveals the type of structural change occurring at the boundary points. \color{black} Subsequently, we study for which training sets of clean and noisy images the optimal parameters are attained either inside $\Lambda$ or at the edges. In two cases, we determine explicit analytic conditions on the data that guarantee structure preservation for the optimization process.

The first setting 
involves a rather general nonlocal regularizer $\Rcal:L^2(\Omega) \to [0,\infty]$ multiplied by a weight parameter $\alpha$ in $\Lambda = (0,\infty)$. 
Inside the domain, we observe structural stability as $\overline{\Jcal}_{\alpha}=\Jcal_{\alpha}$ for all $\alpha \in \Lambda$; in contrast, the regularization disappears when $\alpha =0$ and forces the solutions to be constant when $\alpha=\infty$. 
Moreover, we derive sufficient conditions in terms of the data that prevent the optimal parameter from being attained at  the boundary points; for a single data point $(u^c,u^\eta)$, they specify to
\[
\Rcal(u^c) < \Rcal(u^\eta) \quad \text{and} \quad \norm{u^{\eta}-u^{c}}_{L^2(\Omega)}^2< \normlr{\dashint_{\Omega} u^{\eta}\,\dx - u^{c}}_{L^2(\Omega)}^2,
\]
see Theorem~\ref{thm:pos}. 
Notice that the first of these two conditions is comparable to the one in \cite[Eq.~(10)]{DeScVa16} and shows positivity of optimal weights.

Inspired by the use of different $L^p$-norms in image processing, such as~in the form of quadratic, $TV$, and Lipschitz regularization \cite[Section~4]{PCBC10}, we focus our second case on the integrability exponent of nonlocal regularizers of double-integral type; precisely, functionals of the form
\[
\Rcal_{p}(u) = \left(\frac{1}{|\Omega\times \Omega|}\int_\Omega\int_\Omega f^p(x,y, u(x), u(y)) \,\dx\,\dy \right)^{1/p} \quad \text{for $p \in \Lambda=[1,\infty)$},
\]
with a suitable $f:\Omega\times\Omega\times\R\times\R \to [0,\infty)$. Possible choices for the integrand $f$ include bounded functions or functions of difference-quotient type. We prove stability of the lower-level problem in $\Lambda$, 
and determine the Mosco-limit for $p \to \infty$ via $L^p$-approximation techniques as in \cite{ChPaPr04, KRZ22}. In particular, we show that it is given by a double-supremal functional of the form
\[
\mathcal R_\infty (u)= \esssup_{(x,y)\in \Omega\times \Omega} f(x,y, u(x), u(y)).
\]
In order to see how this structural change affects the image reconstruction, we highlight examples of training data for which the supremal regularizer performs better or worse than the integral counterparts.

As a third application, we consider two families of nonlocal regularizers $\{\Rcal_{\delta}\}_{\delta \in \Lambda}$ with $\Lambda = (0,\infty)$, which were introduced by Aubert \& Kornprobst \cite{AuKo09} and Brezis \& Nguyen in \cite{BrNg18}, respectively, and are closely related to nonlocal filters frequently used in image processing. The parameter $\delta$ reflects the amount of nonlocality in the regularizer. It is known that the functionals $\Rcal_{\delta}$ tend, as $\delta \to 0$, to a multiple of the total variation in the sense of $\Gamma$-convergence. Based on these results, we prove in both cases that the reconstruction functional of our bi-level optimization scheme turns into the classical $TV$-denoising model when $\delta =0$, whereas the regularization vanishes at the other boundary value, $\delta = \infty$. As such, the extended bi-level schemes encode simultaneously nonlocal and total variation regularizations.   
We round off the discussion by presenting some instances of training data where the optimal parameters are attained either at the boundary or in the interior of $\Lambda$. 

Our final bi-level optimization problem features a different type of nonlocality arising from fractional operators; to be precise, we consider, in the same spirit as in \cite{AnB17}, the $L^2$-norm of the spectral fractional Laplacian as a regularizer. 
The parameter of interest here is the order $s/2$ of the fractional Laplacian, which is taken in the fractional range $s \in \Lambda = (0,1)$. 
At the values $s=0$ and $s=1$, we recover local models with regularizers equal to the $L^2$-norm of the function and its gradient, respectively.  Thus, one expects the fractional model to perform better than the two local extremes. We quantify this presumption by deriving analytic conditions in terms of the eigenfunctions and eigenvalues of the classical Laplacian on $\Omega$  ensuring the optimal parameters to be attained in the truly fractional regime.  These conditions on the training data are established by proving and exploiting the differentiability of the extended upper-level functional $\overline{\Ical}$.

For completeness, we mention that practically relevant scenarios when $\Lambda$ is a topological space include  those in which the reconstruction parameters are space-dependent, and thus described by functions. The analysis of this class of applications is left open for future investigations.

The outline of the paper is as follows. In Section~\ref{sec:general}, we present the general abstract bi-level framework, and prove the results regarding the existence of optimal parameters and the two types of extensions of bi-level optimization schemes. The Sections~\ref{sec:weight}--\ref{sec: fractional} then deal with the four applications mentioned in the previous paragraphs.

\section{Establishing the unified framework}\label{sec:general}

Let $\Omega\subset \R^n$ be an open bounded set, and let
\begin{align*}
\bigcup_{j=1}^N (u_j^{c}, u_j^{\eta})\subset L^2(\Omega)\times L^2(\Omega),\quad N\in \N,
\end{align*}
be a set of available square-integrable training data, where each  $u_j^{c}$ represents a clean image and $u_j^{\eta}$ a distorted version thereof, which can be obtained, for instance, by applying some noise to $u_j^{c}$. These data are collected in the vector-valued functions $u^{c} := (u_1^{c}, \ldots, u_{N}^{c})\in L^2(\Omega;\R^N)$ and $u^{\eta} := (u_1^{\eta}, \ldots, u_{N}^{\eta})\in L^2(\Omega;\R^N)$.  As for notation, $\norm{v}_{L^2(\Omega;\R^N)}^2 = \sum_{j=1}^N \norm{v_j}_{L^2(\Omega)}^2$ stands for the \(L^{2}\)-norm of  a function  $v\in L^2(\Omega;\R^N)$.

To reconstruct each damaged image,  $u_j^{\eta}$, we consider denoising  models that consist of a simple fidelity term and a (possibly nonlocal) regularizer; precisely, we minimize functionals  $\Jcal_{\lambda, j}: L^2(\Omega)\to [0, \infty]$ of the form
\begin{align}\label{general_model}
\Jcal_{\lambda, j}(u) =  \|u-u_j^{\eta}\|^2_{L^2(\Omega)}+ \Rcal_\lambda(u), \qquad u\in L^2(\Omega),
\end{align}
where the regularizer $\Rcal_\lambda:L^2(\Omega)\to [0, \infty]$, with $\text{Dom\,} \Rcal_\lambda= \{v\in L^2(\Omega): \Rcal_\lambda(u)<\infty\}$, is a (possibly nonlocal) functional parametrized over $\lambda\in \Lambda $ with $\Lambda$ a subset of a topological space $X$  satisfying the first axiom of countability. Throughout the paper, we always assume that for every $\lambda\in \Lambda$, we have
\begin{equation}\label{eq:assumptions}\tag{H}
\begin{cases}
\text{Dom\,} \Rcal_\lambda \ \text{is non-empty},\\
\Rcal_\lambda \ \text{is weakly $L^2$-lower semicontinuous.}
\end{cases}
\end{equation}
Observe that the functionals $\Jcal_{\lambda,j}$
then  have a minimizer by the direct method in the calculus of variations.

\color{black}
The result of the reconstruction process, meaning the quality of the reconstructed image resulting as a minimizer of~\eqref{general_model}, is known to depend on the choice of the regularizing term $\Rcal_\lambda$.
Our goal is to set up a training scheme that is able to learn how to select a ``good'' parameter $\lambda$ within a corresponding
 given family $\{\Rcal_\lambda\}_{\lambda\in
\Lambda}$ of regularizers.
Here, as briefly described in the Introduction for the single data point case (\(N=1)\), we follow the approach introduced in  \cite{DeScVa16,DeScVa17}
 in the spirit of machine
learning optimization schemes, where training the regularization term means to solve the nested variational problem 
\begin{align}\label{training}\tag{$\Tcal$}
\begin{split}
({\rm Upper\text{-}level}) \qquad& \text{Minimize} \ \ {\Ical}(\lambda):=\inf_{w \in K_{\lambda}}\norm{w-u^c}^2_{L^2(\Omega;\R^N)} \ \ \text{over $\lambda \in \Lambda$,} \\
({\rm Lower\text{-}level}) \qquad& K_{\lambda} : = \Bigl\{w\in L^2(\Omega;\R^N): w_j\in \argmin_{u\in L^2(\Omega)} \Jcal_{\lambda,
j}(u) \hbox{ for all } j\in\{1, \ldots, N\}\Bigr\},
\end{split}
\end{align}
with $\Jcal_{\lambda,j}$ as in~\eqref{general_model}.
 Notice that $K_\lambda\neq\emptyset$ because for all $j\in\{1, \ldots, N\}$, we have
\begin{align}\label{eq:argminK}
K_{\lambda, j} : = {\rm argmin}_{u\in L^2(\Omega)} \Jcal_{\lambda,
j}(u)\not=\emptyset
\end{align}
by Assumption~\eqref{eq:assumptions}.

  To study the training scheme \eqref{training}, we start by introducing a notion of weak \(L^2\)-stability for the family  \(\{\Jcal_{\lambda}\}_{\lambda\in \Lambda}\), with
 \begin{align}\label{eq:famfunct}
 \Jcal_\lambda:=(\Jcal_{\lambda, 1}, \ldots, \Jcal_{\lambda,
N}):L^2(\Omega)\to
[0, \infty]^N\enspace \hbox{ for \(\lambda\in\Lambda\).}
\end{align}
This notion relies on the concept of $\Gamma$-convergence and is related to the notion of (weak) stability as in~\cite[Definition~2.3]{HKB18}, which is defined in terms of minimizers of the lower-level problem.

\begin{definition}[Weak \boldmath{$L^2$}-stability]\label{def:stability}
The family in \eqref{eq:famfunct} is called weakly $L^2$-stable if for every sequence $(\lambda_k)_k\subset\Lambda$ such that  $(\Jcal_{\lambda_k, j})_k$ $\Gamma$-converges with respect to the weak $L^2$-topology for all $j\in\{1, \ldots, N\}$, there exists $\lambda\in \Lambda$ such that  
\begin{align*}
\Gamma(w\text{-}L^2)\text{-}\lim_{k\to \infty} \Jcal_{\lambda_k, j} = \Jcal_{\lambda, j}
\end{align*}
for all $j\in\{1, \ldots, N\}$.
\end{definition}

Before  proceeding, we briefly recall the definition and some properties of $\Gamma$-convergence in the setting relevant to us; for more on this topic, see~\cite{Dal93,Bra02} for instance.

\begin{definition}[\boldmath{$\Gamma$}- and Mosco-convergence]
Let $\Fcal_k:L^2(\Omega) \to [0,\infty]$ for $k \in \N$ and $\Fcal:L^2(\Omega) \to [0,\infty]$ be functionals. The sequence $(\Fcal_k)_k$  (sequentially) $\Gamma$-converges to $\Fcal$ with respect to the weak $L^2$-topology,  written $\Fcal=\Gamma(w\text{-}L^2)\text{-}\lim_{k \to \infty} \Fcal_k$, if:
\begin{itemize}
\item (Liminf inequality) For every sequence $(u_k)_k\subset L^2(\Omega)$ and $u\in L^2(\Omega)$ with $u_k \rightharpoonup u$ in $L^2(\Omega)$, it holds that
\[
\Fcal(u) \leq \liminf_{k \to \infty} \Fcal_k(u_k).
\]
\item (Limsup inequality) For every $u\in L^2(\Omega)$, there exists a sequence $(u_k)_k\subset L^2(\Omega)$ such that $u_k \rightharpoonup u$ in $L^2(\Omega)$ and
\[
\Fcal(u) \geq \limsup_{k \to \infty} \Fcal_k(u_k). \qedhere
\]
\end{itemize}
The sequence $(\Fcal_k)_k$ converges in the sense of Mosco-convergence in $L^2(\Omega)$ to $\Fcal$,  written $\Fcal=\text{\rm Mosc}(L^2)$-$\lim_{k\to \infty}\Fcal_k$, if, in addition, the limsup inequality can be realised by a sequence converging strongly in $L^2(\Omega)$.
\end{definition}
If the liminf inequality holds, then the sequence from the limsup inequality automatically satisfies $\lim_{k \to \infty} \Fcal_k(u_k) = \Fcal(u)$, and is therefore often called a recovery sequence. 
We note that the above sequential definition of $\Gamma$-convergence coincides with the topological definition \cite[Proposition~8.10]{Dal93}  for equi-coercive sequences $(\Fcal_k)_k$, i.e., $\Fcal_k \geq \Psi$ for all $k \in \N$ and for some $\Psi:L^2(\Omega) \to [0,\infty]$ with $\Psi(u) \to \infty$ as $\norm{u}_{L^2(\Omega)}\to \infty $. In particular, the theory implies that the $\Gamma$-limit $\Fcal$ is (sequentially) $L^2$-weakly lower semicontinuous. The   $\Gamma$-convergence has the key property of yielding the convergence of solutions (if they exist) to those of the limit problem, which makes it a suitable notion of variational convergence. Precisely, if $u_k$ is a minimizer of $\Fcal_{k}$ for all $k \in \N$ and $u$ a cluster point of the sequence $(u_k)_k$, then $u$ is a minimizer of $\Fcal$ and $\min_{L^2(\Omega)} \Fcal_k = \Fcal_k(u_k) \to \Fcal(u) = \min_{L^2(\Omega)} \Fcal$, see~\cite[Corollary~7.20]{Dal93}. Notice that the existence of cluster points is implied by the assumption of equi-coercivity.
 In the special case when $(\Fcal_k)_k$ is a constant sequence of functionals, say $\Fcal_k=\Gcal$ for all $k\in \N$,  the $\Gamma$-limit corresponds to the relaxation of $\Gcal$, i.e.,~its $L^2$-weakly lower semicontinuous envelope. Observe that replacing each $\Fcal_k$ by its relaxation does not affect the $\Gamma$-limit of $(\Fcal_k)_k$, see \cite[Proposition~6.11]{Dal93}. 
 
 \color{black}
As we discuss next, weak \(L^2\)-stability provides existence of solutions to the  training scheme
\eqref{training}. We note that the family of functionals $\{\Jcal_\lambda\}_{\lambda \in\Lambda}$ as in~\eqref{eq:famfunct} is equi-coercive in a componentwise sense.

\begin{theorem}
\label{theo:suff}
Let $\Jcal_{\lambda}:L^2(\Omega)\to [0, \infty]^N$ be given by \eqref{eq:famfunct} for each $\lambda\in \Lambda$. If the family $\{\Jcal_\lambda\}_{\lambda\in \Lambda}$ is weakly $L^2$-stable, then $\Ical$ in \eqref{training} has a minimizer.
\end{theorem}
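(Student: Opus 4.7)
The plan is to argue by the direct method on the upper level: take a minimizing sequence $(\lambda_k)_k\subset \Lambda$ for $\Ical$, pick near-optimal minimizers $w_k\in K_{\lambda_k}$, extract from $(w_k)_k$ a weakly $L^2$-convergent subsequence and, independently, from $(\Jcal_{\lambda_k,j})_k$ a subsequence that $\Gamma$-converges, and then invoke weak $L^2$-stability to identify the $\Gamma$-limit with $\Jcal_{\lambda,j}$ for some $\lambda\in\Lambda$. The convergence-of-minima property of $\Gamma$-convergence will finally show that the weak limit of $(w_k)_k$ lies in $K_\lambda$, making $\lambda$ a minimizer of $\Ical$.

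First, I would observe that $\Ical$ is everywhere finite on $\Lambda$ by \eqref{eq:argminK}, so minimizing sequences exist. For each $k\in \N$, I would choose $w_k=(w_{k,1},\dots,w_{k,N})\in K_{\lambda_k}$ with $\|w_k-u^c\|_{L^2(\Omega;\R^N)}^2\leq \Ical(\lambda_k)+1/k$. Since the right-hand side remains bounded in $k$, the sequence $(w_k)_k$ is bounded in $L^2(\Omega;\R^N)$, and up to a not relabelled subsequence, $w_{k,j}\weakly w_j$ in $L^2(\Omega)$ for every $j\in\{1,\dots,N\}$.

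Next, I would extract a $\Gamma$-convergent subsequence of the lower-level functionals. The uniform lower bound $\Jcal_{\lambda,j}(u)\geq \|u-u_j^\eta\|_{L^2(\Omega)}^2$ makes $(\Jcal_{\lambda_k,j})_k$ equi-coercive with respect to the weak $L^2$-topology, and the weak topology is metrizable on bounded subsets of the separable reflexive space $L^2(\Omega)$. Hence, the standard $\Gamma$-compactness theorem (see~\cite[Chapter~8]{Dal93}) applies and yields a further subsequence along which, simultaneously for every $j$, the sequence $(\Jcal_{\lambda_k,j})_k$ $\Gamma$-converges in the weak $L^2$-topology. Weak $L^2$-stability then supplies $\lambda\in \Lambda$ with
\[
\Gamma(w\text{-}L^2)\text{-}\lim_{k\to\infty}\Jcal_{\lambda_k,j}=\Jcal_{\lambda,j}\quad\hbox{for all } j\in\{1,\dots,N\}.
\]

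Since each $w_{k,j}$ minimizes $\Jcal_{\lambda_k,j}$ and $w_{k,j}\weakly w_j$, the convergence-of-minima property of $\Gamma$-convergence~\cite[Corollary~7.20]{Dal93} forces $w_j$ to be a minimizer of $\Jcal_{\lambda,j}$, so that $w:=(w_1,\dots,w_N)\in K_\lambda$. Combining this with the weak $L^2$-lower semicontinuity of $v\mapsto \|v-u^c\|_{L^2(\Omega;\R^N)}^2$ yields
\[
\Ical(\lambda)\leq \|w-u^c\|_{L^2(\Omega;\R^N)}^2\leq \liminf_{k\to\infty}\|w_k-u^c\|_{L^2(\Omega;\R^N)}^2\leq \liminf_{k\to\infty}\Ical(\lambda_k)=\inf_\Lambda \Ical,
\]
so $\lambda$ is optimal. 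The main obstacle I anticipate is the $\Gamma$-compactness step, since weak $L^2(\Omega)$ is not globally metrizable; the remedy is to work on equi-coercive sublevel sets where the weak topology is separable and metrizable, so that the abstract theory of~\cite[Chapter~8]{Dal93} applies without modification.
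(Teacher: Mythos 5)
Your proposal is correct and follows essentially the same route as the paper's proof: a minimizing sequence for $\Ical$, near-optimal $w_k\in K_{\lambda_k}$ bounded in $L^2$, the $\Gamma$-compactness theorem of \cite[Chapter~8]{Dal93} under equi-coercivity, weak $L^2$-stability to identify the limit functionals as $\Jcal_{\lambda,j}$, the convergence-of-minimizers property \cite[Corollary~7.20]{Dal93} to get $w\in K_\lambda$, and weak lower semicontinuity of the fidelity term to conclude. Your explicit remarks on the $1/k$-near-optimal selection and on metrizability of the weak topology on bounded sets are minor technical elaborations of steps the paper treats implicitly.
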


\begin{proof} The statement follows directly from the direct method and the classical properties of $\Gamma$-convergence. 

Let $(\lambda_k)_k\subset \Lambda$ be a minimizing sequence for $\Ical$. Then, 
for each \(k\in\N\),  there is $w_k \in K_{\lambda_k}$ such that
\begin{align}\label{minimizingsequence}
\lim_{k\to\infty}\norm{w_k-u^{c}}^2_{L^2(\Omega;\R^N)} = \inf_{\lambda\in \Lambda} \Ical(\lambda). 
\end{align}
In particular, $(w_k)_k$ is uniformly bounded in $L^2(\Omega;\R^N)$;  hence, extracting a  subsequence if necessary, one may assume that  $w_k\weakly w$ in $L^2(\Omega;\R^N)$ as $k\to \infty$ for some $w\in L^2(\Omega;\R^N)$. 
 Using the equi-coercivity, we apply the compactness result for $\Gamma$-limits \cite[Corollary~8.12]{Dal93} to find a further  subsequence of $(\lambda_k)_k$ (not relabeled) such that $(\Jcal_{\lambda_k, j})_k$ $\Gamma(w\text{-}L^2)$-converges for all \(j\in\{1,...,N\}\).
Consequently, by the weak $L^2$-stability assumption and the properties of \(\Gamma\)-convergence on minimizing sequences,  there exists  $\tilde\lambda\in \Lambda$ such that $w \in K_{\tilde \lambda}$. 
Then, along with~\eqref{minimizingsequence},  
\begin{align*}
\Ical(\tilde \lambda) \leq \norm{w-u^{c}}^2_{L^2(\Omega;\R^N)}\leq \liminf_{k\to \infty} \norm{w_k-u^{c}}_{L^2(\Omega;\R^N)}^2  
= \inf_{\lambda\in \Lambda} 
\Ical(\lambda)\leq\Ical(\tilde \lambda) ,
\end{align*}
which finishes the proof.
\end{proof}

\begin{remark}\label{rem:stability1}
We give a simple counterexample to illustrate that minimizers for  
$\Ical$ may not exist in general. Take $\Lambda = (0,\infty)\subset \R$, a single data point \((u^c,u^\eta)\) with $u^{c}=u^{\eta}\not = 0$, and $\Rcal_{\lambda}(u)=\lambda \norm{u}^2_{L^2(\Omega)}$ for $\lambda\in \Lambda$. Then,  $\Jcal_\lambda(u)=\norm{u-u_\eta}^2_{L^2(\Omega)} + \lambda\norm{u}_{L^2(\Omega)}^2$ for $u\in L^2(\Omega)$ and $K_{\lambda}=\{u^{\eta}/(1+\lambda)\}=\{u^c/(1+\lambda)\}$, so that
\[
\Ical(\lambda) = \left(\frac{\lambda}{1+\lambda}\right)^2\norm{u^c}^2_{L^2(\Omega)},
\]
which does not have a minimizer on $\Lambda=(0,\infty)$. By the previous theorem,  the family must fail to be weakly $L^2$-stable. Indeed, $\Gamma(w\text{-}L^2)\text{-}\lim_{\lambda\to 0}\Jcal_\lambda$ coincides with the pointwise limit and is equal to $\norm{\cdot \ -u_\eta}_{L^2(\Omega)}^2$, which is not an element of $\{\Jcal_\lambda\}_{\lambda\in (0,\infty)}$. 
\end{remark}

 Theorem~\ref{theo:suff} is useful in many situations, including the basic case when the parameter set $\Lambda$ is a compact real interval. However, weak $L^2$-stability is not always guaranteed, as Remark~\ref{rem:stability1} illustrates. If, for instance, we have a sequence $(\lambda_k)_k$ converging to a point in $X$ outside $\Lambda$, then there is no reason to expect that 
\[
\Gamma(w\text{-}L^2)\text{-}\lim_{k\to \infty} \Jcal_{\lambda_k, j} = \Jcal_{\lambda, j}
\]
holds for some $\lambda \in \Lambda$. 

To overcome this issue and provide a more general existence framework, we will look at a suitable replacement of the bi-level scheme. In the following, we denote by $\overline{\Lambda}$ the closure of $\Lambda$ and suppose that for each $j\in\{1, \ldots, N\}$ and $\lambda\in \overline{\Lambda}$, the $\Gamma$-limits \begin{align}\label{eq:gammaextension}
\overline{\Jcal}_{\lambda, j} := \Gamma(w\text{-}L^2)\text{-}\lim_{\lambda'\to \lambda} \Jcal_{\lambda', j}
\end{align} 
exist, where \(\lambda'\) takes values on an arbitrary sequence in \(\Lambda\). We further set $$\overline{\Jcal}_{\lambda} := (\overline{\Jcal}_{\lambda,1}, \ldots, \overline{\Jcal}_{\lambda, N}):\overline{\Lambda}\to [0, \infty]^N.$$

Based on these definitions, we introduce $\overline{\Ical}:\overline{\Lambda}\to [0, \infty]$ as   the extension of the upper level functional $\Ical$ given by
\begin{align}\label{eq:extI}
\overline{\Ical}(\lambda) :=\min_{w\in \overline{K}_{\lambda}}\|w-u^{c}\|^2_{L^2(\Omega;\R^N)},
\end{align}
where $\overline{K}_{\lambda,j}:= {\rm argmin}_{u\in L^2(\Omega)} \overline{\Jcal}_{\lambda, j}(u)$ and $\overline{K}_{\lambda} := \overline{K}_{\lambda,1}\times \overline{K}_{\lambda,2} \times \dots \times \overline{K}_{\lambda,N}$ for $\lambda\in \overline{\Lambda}$. Observe that $\overline{K}_{\lambda,j}$ is $L^2$-weakly closed because the functional $\overline{\Jcal}_{\lambda,j}$, as a $\Gamma(w\text{-}L^2)$-limit by~\eqref{eq:gammaextension}, is $L^2$-weakly lower semicontinuous. Hence, the minimum in the definition of 
$\overline \Ical$ 
is actually attained. Notice that taking  constant
sequences in the parameter space in~\eqref{eq:gammaextension} and using the weak lower semicontinuity of the regularizers $\Rcal_{\lambda}$ in \eqref{eq:assumptions}, we conclude that  $\overline{\Jcal}_{\lambda}$ coincides with $\Jcal_{\lambda}$ whenever $\lambda\in \Lambda$.  In that sense, we can think of $\{\overline{\Jcal}_{\lambda}\}_{\lambda \in \overline{\Lambda}}$ 
as the extension of the family $\{\Jcal_{\lambda}\}_{\lambda \in \Lambda}$ to the closure of $\Lambda$.

All together, this leads to the extended bi-level problem
\begin{align}\label{trainingextended}\tag{$\overline{\Tcal}$}
\begin{split}
({\rm Upper\text{-}level}) \qquad& \text{Minimize} \ \ {\overline{\Ical}}(\lambda):=\min_{w \in \overline{K}_{\lambda}}\norm{w-u^c}^2_{L^2(\Omega;\R^N)} \ \ \text{over $\lambda \in \overline{\Lambda}$,} \\
({\rm Lower\text{-}level}) \qquad& \overline{K}_{\lambda} : = \Bigl\{w\in L^2(\Omega;\R^N): w_j\in \argmin_{u\in L^2(\Omega)} \overline{\Jcal}_{\lambda,
j}(u) \hbox{ for all } j\in\{1, \ldots, N\}\Bigr\}. 
\end{split}
\end{align}

The theorem below compares the extended upper level functional $\overline{\Ical}$ with the relaxation of $\Ical$ (after trivial extension to $\overline{\Lambda}$ by $\infty$), that is, with its lower semicontinuous envelope  $\Ical^{\rm rlx}: \overline{\Lambda} \to [0, \infty]$ given by 
\begin{align}\label{Irlx}
\Ical^{\rm rlx}(\lambda):=\inf\big\{\liminf_{k\to \infty} 
\Ical (\lambda_k): (\lambda_k)_k\subset \Lambda, \lambda_k\to \lambda \text{ in $\overline{\Lambda}$}\big\}.
\end{align}
As we will see, the key assumption to obtain the equality between $\overline{\Ical}$ and $\Ical^{\rm rlx}$ is the Mosco-convergence of the family of regularizers in \eqref{eq:Moscoconvergence}, which is stronger than the $\Gamma$-convergence of the reconstruction functionals in \eqref{eq:gammaextension}.
It even implies the Mosco-convergence
\begin{align*}
\overline{\Jcal}_{\lambda, j} =\text{Mosc}(L^2)\text{-}\lim_{\lambda'\to \lambda} \Jcal_{\lambda', j}
\end{align*}
and, in this case, the limit passage can be performed additively in the fidelity and regularizing term; thus,  for all $j\in \{1, \ldots, N\}$, we have
\begin{align}\label{eq67}
\overline{\Jcal}_{\lambda, j}(u)= \norm{u - u_j^\eta}_{L^2(\Omega)} + \overline{\Rcal}_\lambda(u) \qquad \text{for $u\in L^2(\Omega)$.}
\end{align}

\begin{theorem}\label{theo:relax2}
Consider the bi-level optimization problems~\eqref{training} and~\eqref{trainingextended}, assume \eqref{eq:gammaextension}, and recall the definitions in \eqref{eq:extI} and \eqref{Irlx}. 
Suppose in addition that
\begin{itemize}
\item[$(i)$] the Mosco-limits 
\begin{align}\label{eq:Moscoconvergence}
\overline{\Rcal}_{\lambda} := \text{\rm Mosc}(L^2)\text{-}\lim_{\lambda'\to \lambda} \Rcal_{\lambda'}
\end{align} 
exist for each $\lambda\in \overline{\Lambda}$, with $\lambda'$ taking values on sequences in $\Lambda$, and
\item[$(ii)$] $\overline{K}_\lambda$ is a singleton for every $\lambda\in \overline{\Lambda}\setminus \Lambda$.
\end{itemize}
Then, the extension $\overline{\Ical}$ of $\Ical$ to the closure $\overline{\Lambda}$ coincides with the relaxation of $\Ical$, i.e., $\overline{\Ical}=\Ical^{\rm rlx}$ on $\overline{\Lambda}$.
\end{theorem}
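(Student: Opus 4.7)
The plan is to prove the two inequalities $\overline{\Ical}\leq \Ical^{\rm rlx}$ and $\Ical^{\rm rlx}\leq \overline{\Ical}$ on $\overline{\Lambda}$ separately, with assumptions $(i)$ and $(ii)$ needed only for the second direction.

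For the upper bound, I would fix $\lambda\in\overline{\Lambda}$ and any sequence $(\lambda_k)_k\subset\Lambda$ with $\lambda_k\to\lambda$. For each $k$, select a minimizer $w_k\in K_{\lambda_k}$ realizing $\Ical(\lambda_k)=\|w_k-u^c\|_{L^2(\Omega;\R^N)}^2$; such a minimizer exists because each $K_{\lambda_k,j}$ is weakly closed and componentwise bounded, the latter following from the convergence of minimal values along \eqref{eq:gammaextension}, which yields a uniform control $\|w_{k,j}-u_j^\eta\|_{L^2(\Omega)}^2\leq \min \Jcal_{\lambda_k,j}$. Extracting a weakly convergent subsequence $w_k\weakly w$ in $L^2(\Omega;\R^N)$ and applying, componentwise, the standard fact that weak cluster points of minimizers of a $\Gamma(w\text{-}L^2)$-convergent sequence minimize the limit, I obtain $w\in\overline{K}_\lambda$. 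Weak lower semicontinuity of the squared norm then gives $\overline{\Ical}(\lambda)\leq \|w-u^c\|_{L^2(\Omega;\R^N)}^2\leq \liminf_k \Ical(\lambda_k)$, and passing to the infimum over admissible sequences yields $\overline{\Ical}(\lambda)\leq \Ical^{\rm rlx}(\lambda)$.

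For the lower bound, the case $\lambda\in\Lambda$ is immediate by testing the definition of $\Ical^{\rm rlx}$ with the constant sequence and using that $\overline{\Ical}=\Ical$ on $\Lambda$, noted after~\eqref{eq:extI}. The main case is $\lambda\in\overline{\Lambda}\setminus\Lambda$, where assumption~$(ii)$ gives $\overline{K}_\lambda=\{w^*\}$ for a single $w^*$. Pick any $(\lambda_k)_k\subset\Lambda$ with $\lambda_k\to\lambda$ and any $w_k\in K_{\lambda_k}$. By the argument above, every weak cluster point of $(w_k)_k$ lies in $\overline{K}_\lambda$, so uniqueness forces the whole sequence to satisfy $w_k\weakly w^*$. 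The critical step is then to upgrade this to strong $L^2$-convergence using Mosco-convergence \eqref{eq:Moscoconvergence}. Fixing $j$, the convergence of minima gives $\Jcal_{\lambda_k,j}(w_{k,j})\to \overline{\Jcal}_{\lambda,j}(w_j^*)$, which by the additive split \eqref{eq67} provided by assumption~$(i)$ reads $\|w_{k,j}-u_j^\eta\|_{L^2(\Omega)}^2+\Rcal_{\lambda_k}(w_{k,j})\to \|w_j^*-u_j^\eta\|_{L^2(\Omega)}^2+\overline{\Rcal}_\lambda(w_j^*)$. Weak lower semicontinuity of the squared norm and the Mosco-liminf inequality yield separate liminf bounds on the two summands; since their sum converges to the sum of the bounds, both must hold with equality. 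In particular $\|w_{k,j}-u_j^\eta\|_{L^2(\Omega)}^2\to \|w_j^*-u_j^\eta\|_{L^2(\Omega)}^2$, and combined with $w_{k,j}\weakly w_j^*$ the Radon--Riesz property of $L^2$ upgrades this to strong convergence $w_{k,j}\to w_j^*$. Consequently $\Ical(\lambda_k)=\|w_k-u^c\|_{L^2(\Omega;\R^N)}^2\to \|w^*-u^c\|_{L^2(\Omega;\R^N)}^2=\overline{\Ical}(\lambda)$, giving $\Ical^{\rm rlx}(\lambda)\leq \overline{\Ical}(\lambda)$.

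The hardest part is precisely this upgrade from weak to strong convergence of the fidelity distances, and it is here that both assumptions are used essentially: without Mosco-convergence (only under \eqref{eq:gammaextension}) some of the fidelity mass could be absorbed by the weak $\Gamma$-limit of the regularizer, producing a strict liminf inequality that breaks the recovery bound; and without uniqueness of $\overline{K}_\lambda$, different weak cluster points of $(w_k)_k$ could correspond to different values of the upper-level cost, preventing us from selecting in the limit the specific minimizer of $\overline{\Jcal}_\lambda$ that attains $\overline{\Ical}(\lambda)$. The optimality of both conditions is to be addressed in Example~\ref{ex:optimal}.
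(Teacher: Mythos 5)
Your proof is correct and takes essentially the same route as the paper: the upper bound by passing weak limits of lower-level minimizers through \eqref{eq:gammaextension} and weak lower semicontinuity of the norm, and the lower bound by combining the uniqueness assumption on $\overline{K}_\lambda$ with the additive split \eqref{eq67} furnished by Mosco-convergence to upgrade weak to strong convergence of the minimizers (Radon--Riesz), exactly as in the paper's argument. The only cosmetic deviations are that the paper uses $\delta$-approximate minimizers in the upper bound where you invoke exact ones, that you should first pass to a subsequence of $(\lambda_k)_k$ realizing $\liminf_{k}\Ical(\lambda_k)$ before extracting the weakly convergent subsequence of $(w_k)_k$, and that for an arbitrary $w_k\in K_{\lambda_k}$ the final relation $\Ical(\lambda_k)=\norm{w_k-u^c}^2_{L^2(\Omega;\R^N)}$ should be an inequality $\leq$, which still suffices.
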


\begin{proof}
To show that $\overline{\Ical} \leq \Ical^{\rm rlx}$, we take  $ \lambda\in \overline{\Lambda}$
and let $(\lambda_k)_k\subset \Lambda$ with  $\lambda_k\to \lambda$ in $\overline\Lambda$ be an admissible sequence  for \(\Ical^{\rm rlx}(\lambda)\)
in~\eqref{Irlx}.
We  may even assume that 
 $\infty>\liminf_{k\to \infty} \Ical (\lambda_k)=\lim_{k\to \infty} \Ical (\lambda_k)$.
Then,  recalling \eqref{eq:argminK}  and  fixing $\delta>0$, we can find $w_k\in K_{\lambda_k}$ such that 
\begin{align*}
\lim_{k\to \infty} \Ical(\lambda_k) \geq \liminf_{k\to \infty} \norm{w_k - u^{c}}_{L^2(\Omega;\R^N)}^2-\delta.
\end{align*}
In particular, $({w_k})_k$ is uniformly bounded in $L^2(\Omega;\R^N)$, which allows us to extract an \(L^2\)-weakly converging subsequence (not relabeled) with limit $\bar w\in L^2(\Omega;\R^N)$.  By the properties of $\Gamma$-convergence,
we infer from \eqref{eq:gammaextension} that  $\bar w_j \in {\rm argmin}_{u\in L^2(\Omega)}\overline{\Jcal}_{ \lambda, j}(u) $ for all $j\in\{1,\cdots,N\}$;  in other words,
\(\bar w \in\overline{K}_{\lambda} \). Thus,
\begin{align*}
\lim_{k\to \infty} 
\Ical (\lambda_k) \geq  \norm {\bar w - u^{c}} _{L^2(\Omega;\R^N)}^2-\delta \geq \overline \Ical( \lambda) -\delta.
\end{align*}
By letting \(\delta\to0\) first, and then taking the infimum over all admissible sequences for \(\Ical^{\rm rlx}(\lambda)\) in~\eqref{Irlx}, it follows that $\overline{\Ical}(\lambda) \leq \Ical^{\rm rlx}(\lambda)$.

To prove the reverse inequality, it suffices to consider $ \lambda\in \overline{\Lambda}\setminus \Lambda$ and find a sequence $(\lambda_k)_k\subset
\Lambda$ converging to $\lambda$ in $\overline{\Lambda}$ and satisfying $\liminf_{k\to \infty} 
\Ical(\lambda_k)  \leq \overline{\Ical}(\lambda)$. To that end, take any $(\lambda_k)_k\subset \Lambda$ with $\lambda_k\to \lambda$ in $\overline{\Lambda}$, and let
 $w_k \in K_{\lambda_k}$ for $k\in \N$. Recalling $(ii)$, denote by  $w_\lambda=(w_{\lambda,
1}, \ldots, w_{\lambda, N})$  the unique element in $\overline{K}_\lambda$. Then, using ~\eqref{eq:gammaextension} and the equi-coercivity of $(\Jcal_{\lambda})_{\lambda\in \Lambda}$, we obtain by the theory of $\Gamma$-convergence (see ~\cite[Corollary 7.24]{Dal93})  that $(w_k)_k$ converges weakly in $L^2(\Omega;\R^N)$ to $w_\lambda$; moreover, it holds for all $j\in \{1, \ldots, N\}$  that
 \begin{align}\label{eq:con4}
 \Jcal_{\lambda_k, j}(w_{k, j})\to \overline{\Jcal}_{\lambda, j}(w_{\lambda, j})\qquad \text{as $k\to \infty$.}
 \end{align} 
 
The following shows that $(w_k)_k$ converges even strongly in $L^2(\Omega;\R^N)$. Indeed, fixing $j\in\{1, \ldots, N\}$, we infer from~\eqref{eq:con4} along with the Mosco-convergence of the regularizers in $(i)$ and~\eqref{eq67} that 
\begin{align*}
\norm{w_{\lambda, j} - u_j^\eta}^2_{L^2(\Omega)} +\overline{\Rcal}_\lambda(w_{\lambda, j}) &= \overline{\Jcal}_{\lambda, j}(w_{\lambda, j}) = \lim_{k\to \infty} \Jcal_{\lambda_k, j} (w_{k, j}) \nonumber \\& = \lim_{k\to \infty} \Bigl[\norm{w_{k, j} - u_j^\eta}^2_{L^2(\Omega)} +{\Rcal}_{\lambda_k}(w_{k, j})\Bigr] \\ 
& \geq \limsup_{k\to \infty} \norm{w_{k, j} - u_j^\eta}^2_{L^2(\Omega)} + \overline{\Rcal}_\lambda(w_{\lambda, j}). \nonumber
\end{align*}
 Hence,  \(\norm{w_{\lambda, j} - u_j^\eta}^2_{L^2(\Omega)}\geq \limsup_{k\to \infty} \norm{w_{k, j} - u_j^\eta}^2_{L^2(\Omega)} \), which  together with the weak lower semicontinuity of the $L^2$-norm yields 
 \begin{align*}
 \lim_{k\to \infty} \norm{w_{k, j}-u_j^\eta}_{L^2(\Omega)}^2= \norm{w_{\lambda, j} -u_j^\eta}_{L^2(\Omega)}^2;
 \end{align*}
 thus, $w_{k}\to w_{\lambda}$ strongly in $L^2(\Omega;\R^N)$ using the combination of weak convergence and convergence of norms. 
With this, we finally conclude that 
\begin{align*}
\liminf_{k\to \infty} 
\Ical(\lambda_k)  & 
\leq \liminf_{k\to \infty} \norm{w_k- u^{c}}_{L^2(\Omega;\R^N)}^2   =  \norm{w_\lambda- u^{c}}_{L^2(\Omega;\R^N)}^2 =  \min_{w\in \overline{K}_\lambda}\norm{w- u^{c}}_{L^2(\Omega;\R^N)}^2 = \overline{\Ical}(\lambda),
\end{align*}
 finishing the proof. 
\end{proof}

\begin{remark}\label{rem:MoscovsGamma}
By inspecting the proof, it becomes clear that the estimate $\overline{\Ical} \leq \Ical^{\rm rlx}$  holds without the additional assumptions $(i)$ and $(ii)$ from the previous theorem;  in other words, $\overline{\Ical}$ always provides a lower bound for the relaxation of $\Ical$.
\end{remark}

The identity $\overline{\Ical}=\Ical^{\rm rlx}$ mail fail if either of the assumptions $(i)$ or $(ii)$ in Theorem~\ref{theo:relax2} is  dropped as the following example shows.  
\begin{example}\label{ex:optimal}
a) To see why $(i)$ is necessary, consider $\Lambda =(0,1]$, a single data point \((u^c,u^\eta)\) with $u^c=u^{\eta}=0$, and $$\Rcal_{\lambda}=\frac{1}{\lambda}\norm{\cdot\,-v_{\lambda}}^2_{L^2(\Omega)} \qquad \text{with 
 $v_{\lambda}=v(\cdot/\lambda)\in L^2(\Omega)$}$$
 for a given $v \in L^{\infty}(\R^n)$ with the properties that $v$ is $(0,1)^n$-periodic, $v \in \{-1,1\}$ almost everywhere,  and $\int_{(0,1)^n} v\,\dx=0$. Under these specifications, the $\Gamma$-limits $\overline{\Jcal}_\lambda=\Gamma(w\text{-}L^2)\text{-}\lim_{\lambda'\to \lambda}\Jcal_{\lambda'}$ (cf.~\eqref{eq:gammaextension} and~\eqref{general_model}) exist and are given by
\begin{align}\label{Lcal_remark}
\overline{\Jcal}_{\lambda}(u)=\begin{cases}
\norm{u}^2_{L^2(\Omega)}+\frac{1}{\lambda}\norm{u-v_{\lambda}}_{L^2(\Omega)}^2 & \text{for $\lambda \in (0,1],$}\\ 
\abs{\Omega}+\chi_{\{0\}}(u) & \text{for $\lambda=0$},
\end{cases}
\end{align}
where $\chi_{E}$ denotes the indicator function of a set $E \subset L^2(\Omega)$, i.e.,
\begin{equation*}
\chi_{E}(u)=\begin{cases}
0 &\text{if $u \in E$},\\
\infty &\text{if $u \not \in E$},
\end{cases} \qquad \text{for $u \in L^2(\Omega)$}.
\end{equation*} 

The non-trivial case is  when $\lambda=0$. In this case,  we observe that we can take   $(v_{\lambda'})_{\lambda'}$ as a recovery sequence for $u=0$ because it converges  weakly  in $L^2(\Omega)$ as $\lambda'\to 0$ to  $\int_{(0,1)^n}v\, \dx =0$ by the Riemann--Lebesgue lemma for periodically oscillating sequences. For the liminf inequality, let $u_{\lambda'} \rightharpoonup u$ as $\lambda' \to 0$ and suppose without loss of generality that $\sup_{\lambda'}\Rcal_{\lambda'}(u_{\lambda'})<\infty$. Then, $u_{\lambda'} = v_{\lambda'}+r_{\lambda'}$ with $r_{\lambda'} \to 0$ in $L^2(\Omega)$ as $\lambda' \to 0$, which implies $u=0$ and, recalling that $v\in \{-1,1\}$ almost everywhere,
\[
\liminf_{\lambda' \to 0}\Jcal_{\lambda'}(u_{\lambda'})\geq \lim_{\lambda' \to 0}\norm{v_{\lambda'}+r_{\lambda'}}^2_{L^2(\Omega)}=\lim_{\lambda' \to 0}\norm{v_{\lambda'}}^2_{L^2(\Omega)}=\abs{\Omega} = \overline{\Jcal}_0(0),
\] 
 which completes the proof of~\eqref{Lcal_remark} when \(\lambda=0\). 

In view of~\eqref{Lcal_remark}, one can now read off that $K_{\lambda}=\overline{K}_\lambda=\{v_{\lambda}/(1+\lambda)\}$ for $\lambda \in (0,1]$ and $\overline{K}_0=\{0\}$. In particular,  condition $(ii)$ on the uniqueness of minimizers of the extended lower-level problem is fulfilled here. Hence, 
\begin{align}\label{Ical_ex27}
\Ical(\lambda) = \Bigl(\frac{1}{1+\lambda}\Bigr)^2\abs{\Omega} 
\end{align}
for $\lambda\in (0,1],$
and
\[ \overline{\Ical}(\lambda) = \begin{cases}
\displaystyle \Bigl(\frac{1}{1+\lambda}\Bigr)^2\abs{\Omega} &\text{if $\lambda \in (0,1]$},\\
0 &\text{if $\lambda =0$} \quad 
\end{cases}
\]
for $\lambda\in [0,1]$. It is immediate to see from~\eqref{Ical_ex27} that 
$$\overline{\Ical}(0) =0 <  |\Omega|=\Ical^{\rm rlx}(0).$$ Notice  that this example hinges on the fact that the minimizers $v_{\lambda}/(1+\lambda)$ only converge weakly as $\lambda \to 0$, which, in view of the proof of Theorem~\ref{theo:relax2}, implies that the family of regularizers $\{\Rcal_\lambda\}_{\lambda\in \Lambda}$ does not Mosco-converge in $L^2(\Omega)$ in the sense of~\eqref{eq:Moscoconvergence},   thus failing to satisfy $(i)$. 
\smallskip

b) For the necessity of $(ii)$, consider $\Lambda=(0,1]$, a single data point \((u^c,u^\eta)\) with $u^c=0$ and $\norm{u^{\eta}}^2_{L^2(\Omega)}=1$, and
\[
\Rcal_{\lambda}(u) =\begin{cases}
\lambda &\text{if $u=0$},\\
1 &\text{if $u \not = 0$}.
\end{cases}
\]
While it is straightforward to check that condition \((i)\) in Theorem~\ref{theo:relax2} regarding the Mosco-limits of $\{\Rcal_\lambda\}_{\lambda \in \Lambda}$ is satisfied with
\[
\overline{\Rcal}_{\lambda}(u) =\begin{cases}
\lambda &\text{if $u=0$},\\
1 &\text{if $u \not = 0$}
\end{cases}
\]
for $\lambda \in [0,1]$, which clearly coincides with $\Rcal_{\lambda}$ for $\lambda \in \Lambda=(0,1]$, condition $(ii)$ fails. Indeed, it follows from~\eqref{eq67} that $\overline{\Jcal}_{\lambda}(u^{\eta})= \overline{\Rcal}_\lambda(u^\eta)=1$ and $\overline{\Jcal}_{\lambda}(0)=\norm{u^{\eta}}^2_{L^2(\Omega)}+\lambda = 1+\lambda$ for all $\lambda \in [0,1]$.  Consequently, for $\lambda\in (0,1]$, we have $\overline{\Jcal}_\lambda=\Jcal_\lambda$  and   $u^\eta$ is its  unique minimizer; in contrast, for \(\lambda=0\), $\overline{\Jcal}_0$ has two minimizers, namely  
 $\overline{K}_{0}=\{u^{\eta},0\}=\{u^{\eta},u^c\}$.  
Finally, we observe that the conclusion of Theorem~\ref{theo:relax2} fails here because\[
\overline{\Ical}(0)=0 \qquad \text{and} \qquad \Ical(\lambda)= \norm{u^c-u^{\eta}}^2_{L^2(\Omega)} =1 \ \ \text{for all $\lambda \in (0,1]$},
\]
which yields $\overline{\Ical}(0)=0 < 1= \Ical^{\rm rlx}(0)$.
\end{example}

The following result is a direct consequence of Theorem~\ref{theo:relax2} and standard properties of relaxation.

\begin{corollary}\label{cor:relax}
Under the assumptions of 
 Theorem~\ref{theo:relax2} and if $\overline{\Lambda}$ is compact, it holds that:
\begin{itemize}
\item[$(i)$] The extension $\overline{\Ical}$ has at least one minimizer and
\[
\min_{\overline{\Lambda}} \overline{\Ical} = \inf_{\Lambda} \Ical.
\]
\item[$(ii)$] Any minimizing sequence $(\lambda_k)_k \subset \Lambda$ of $\Ical$ converges up to subsequence to a minimizer $\lambda \in \overline{\Lambda}$ of $\overline{\Ical}$.

\item[$(iii)$] If $\lambda \in \Lambda$ minimizes $\overline{\Ical}$, then $\lambda$ is also a minimizer of $\Ical$.
\end{itemize}

\end{corollary}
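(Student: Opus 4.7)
The plan is to leverage the identification $\overline{\Ical}=\Ical^{\rm rlx}$ from Theorem~\ref{theo:relax2} together with standard facts about lower semicontinuous envelopes on compact spaces. Since $X$ is first countable, so is $\overline{\Lambda}$ with the subspace topology, which means sequential characterizations (as used in \eqref{Irlx}) coincide with the topological ones; combined with compactness of $\overline{\Lambda}$, this is the main structural input.

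For item $(i)$, I would first observe that $\Ical^{\rm rlx}$ is by construction lower semicontinuous on $\overline{\Lambda}$ (taking into account first countability so that the sequential relaxation equals the topological one). Being lower semicontinuous on the compact space $\overline{\Lambda}$, it attains its minimum, hence so does $\overline{\Ical}=\Ical^{\rm rlx}$. To identify the minimum value, I would use the two standard inequalities: on the one hand, $\Ical^{\rm rlx}\leq \Ical$ on $\Lambda$ (take the constant sequence in \eqref{Irlx}), so $\min_{\overline{\Lambda}}\overline{\Ical}\leq \inf_{\Lambda}\Ical$; on the other hand, for any $\lambda\in\overline{\Lambda}$ and any admissible sequence $(\lambda_k)_k\subset\Lambda$ with $\lambda_k\to\lambda$, one has $\liminf_k \Ical(\lambda_k)\geq \inf_{\Lambda}\Ical$, whence $\overline{\Ical}(\lambda)=\Ical^{\rm rlx}(\lambda)\geq \inf_{\Lambda}\Ical$.

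For item $(ii)$, let $(\lambda_k)_k\subset\Lambda$ be a minimizing sequence for $\Ical$. By compactness and first countability of $\overline{\Lambda}$, a subsequence (not relabeled) converges to some $\lambda\in\overline{\Lambda}$. Using the definition of $\Ical^{\rm rlx}$ in \eqref{Irlx} applied to this very sequence, together with $(i)$,
\begin{align*}
\overline{\Ical}(\lambda)=\Ical^{\rm rlx}(\lambda)\leq \liminf_{k\to\infty}\Ical(\lambda_k)=\inf_{\Lambda}\Ical=\min_{\overline{\Lambda}}\overline{\Ical},
\end{align*}
so $\lambda$ is a minimizer of $\overline{\Ical}$.

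For item $(iii)$, I would invoke the fact noted in the paragraph following \eqref{trainingextended}: on $\Lambda$ the extension coincides with the original, i.e., $\overline{\Ical}(\lambda)=\Ical(\lambda)$ for every $\lambda\in\Lambda$ (this follows by choosing the constant sequence in \eqref{eq:gammaextension} and using the weak lower semicontinuity \eqref{eq:assumptions}, which forces $\overline{\Jcal}_\lambda=\Jcal_\lambda$ and hence $\overline{K}_\lambda=K_\lambda$). Therefore, if $\lambda\in\Lambda$ minimizes $\overline{\Ical}$, then by $(i)$, $\Ical(\lambda)=\overline{\Ical}(\lambda)=\min_{\overline{\Lambda}}\overline{\Ical}=\inf_{\Lambda}\Ical$, so $\lambda$ also minimizes $\Ical$. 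No real obstacle is expected here; the only subtlety is ensuring that the sequential formulation of $\Ical^{\rm rlx}$ in \eqref{Irlx} truly produces a (topologically) lower semicontinuous function on $\overline{\Lambda}$, which is where first countability is implicitly used.
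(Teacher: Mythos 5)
Your proposal is correct and follows exactly the route the paper intends: the paper proves this corollary only by citing Theorem~\ref{theo:relax2} together with ``standard properties of relaxation,'' and your argument simply fills in those standard facts (lower semicontinuity of $\Ical^{\rm rlx}$ via first countability, attainment on the compact set $\overline{\Lambda}$, preservation of the infimum, subsequence extraction for minimizing sequences, and $\overline{\Ical}=\Ical$ on $\Lambda$ for item $(iii)$). No gaps; the subtleties you flag (sequential versus topological envelope, sequential compactness) are precisely where first countability and compactness are used.
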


 We conclude this section on the theoretical framework with a brief comparison with related works on optimal control problems. 
 By setting $K=\{(w,\lambda) \in L^2(\Omega)\times \Lambda \,:\, w \in K_{\lambda}\}$, the bi-level optimization problem~\eqref{training} can be equivalently rephrased into minimizing
\[
\widehat{\Ical}(u,\lambda) = \norm{u-u^c}^2_{L^2(\Omega)} + \chi_{K}(u,\lambda),\qquad (u, \lambda)\in L^2(\Omega) \times \Lambda, 
\]
as a functional of two variables; observe that 
\begin{align*}
\Ical(\lambda)= \inf_{w\in L^2(\Omega)} \widehat{\Ical}(w, \lambda). 
\end{align*}
Similar functionals and their relaxations have been studied in the literature, including~\cite{BuD82, But87, BBF93}. Especially the paper~\cite{BBF93} by Belloni, Buttazzo, \& Freddi, where the authors propose to extend the control space to its closure and find a description of the relaxed optimal control problem, shares many parallels with our results.
Apart from some differences in the assumptions and abstract set-up, the main reason why their results are not applicable here is the continuity condition of the cost functional with respect to the state variable \cite[Eq.~(2.11)]{BBF93}. In our setting, this would translate into weak continuity of the $L^2$-norm, which is clearly false. The argument in the proof of Theorem~\ref{theo:relax2} exploiting the Mosco-convergence of the regularizers (see \eqref{eq:Moscoconvergence}) is precisely what circumvents this issue.

\section{Learning the optimal weight of the regularization term}\label{sec:weight}
In this section, we study the optimization of a weight factor, often called tuning parameter, in front of
a fixed regularization term. 
Such tuning parameters are typically employed in practical implementations of variational denoising models to adjust the best level of regularization. This setting constitutes a simple,
yet non-trivial, application of our general theory and therefore helps to exemplify the abstract results from the previous section. 

As above, $\Omega\subset \R^n$ is a bounded open set and $u^{c}$, $u^{\eta}\in L^2(\Omega;\R^N)$ are the given data representing pairs of clean
and noisy images. We take \(\Lambda=(0,\infty)\) describing  the range of a weight factor and, to distinguish
the various parameters throughout this paper, denote by \(\alpha\) an
arbitrary point in \(\overline\Lambda=[0,\infty]\). 
For a fixed map $\Rcal:L^2(\Omega)\to
[0, \infty]$ with the properties that
\begin{enumerate}
 \item[(H$1_{\alpha}$)] $\Rcal$ is convex,
vanishes exactly on constant functions, and ${\rm Dom}\, \Rcal$ is dense
in $L^2(\Omega)$, 
  \item[(H$2_{\alpha}$)] $\Rcal$ is lower semicontinuous on $L^2(\Omega)$,
 \end{enumerate}
 we define the weighted regularizers   
\begin{align}\label{Ralpha}
\Rcal_\alpha=\alpha\Rcal \qquad \text{ for $\alpha\in (0, \infty)$}.
\end{align}
 Note that~(H$1_{\alpha}$) and (H$2_{\alpha}$) imply that the family $\{\Rcal_{\alpha}\}_{\alpha \in (0, \infty)}$ satisfies \eqref{eq:assumptions} because convexity and lower semicontinuity yield weak lower semicontinuity, making this setting match with the framework of Section~\ref{sec:general}.

Following the definition of the training scheme~\eqref{training}, we introduce here for $\alpha\in (0, \infty)$ and $j\in \{1, \ldots, N\}$ the reconstruction functionals
  \begin{align*}
  \Jcal_{\alpha, j}(u) = \norm{u-u_j^{\eta}}_{L^2(\Omega)}^2 + \Rcal_\alpha(u) \qquad \text{for $u\in L^2(\Omega)$,}
  \end{align*} 
  cf.~\eqref{general_model},
 and consider accordingly the upper level functional  $\Ical:(0, \infty)\to [0, \infty)$ given by
  \begin{align}\label{training_sec2}
  \Ical(\alpha) = \inf_{w\in K_\alpha}\|w-u^{c}\|^2_{L^2(\Omega;\R^N)}
\enspace \text{ for  \(\alpha\in(0,\infty)\)}, \quad
  \end{align} 
  with $K_\alpha=K_{\alpha, 1}\times \ldots\times K_{\alpha, N}$ and $K_{\alpha, j} = \argmin_{u\in L^2(\Omega)} \Jcal_{\alpha, j}(u)$, cf.~ \eqref{eq:argminK}.
Further, the following set of hypotheses on 
the training data will play a crucial role for our main result in this section (Theorem~\ref{thm:pos}):  
 \begin{enumerate}
\item[(H$3_{\alpha}$)] It holds that
  \begin{align*}
\sum_{j=1}^N \Rcal(u^{c}_j)<\sum_{j=1}^N\Rcal(u^{\eta}_j);
  \end{align*}
  \item[(H$4_{\alpha}$)] the data $u^\eta$ and $u^c$ satisfy  \begin{align*}
 \norm{u^{\eta}-u^{c}}_{L^2(\Omega;\R^N)}^2< \normB{\dashint_{\Omega} u^{\eta}\,\dx
- u^{c}}_{L^2(\Omega;\R^N)}^2.
  \end{align*} 
 \end{enumerate}

 \begin{remark}[Discussion of the hypotheses (H\boldmath{$1_{\alpha}$})--(H\boldmath{$4_{\alpha}$})]\label{rem:H0H3}
 a)  Note that  (H$1_{\alpha}$) implies that the set of minimizers for the reconstruction functionals, $K_{\alpha}$, has cardinality one, owing to the convexity of $\Rcal$ and the strict convexity of the fidelity term, considering also that $\Jcal_{\alpha, j}\not\equiv \infty$. In the following, we write $w^{(\alpha)}=(w^{(\alpha)}_1, \ldots, w^{(\alpha)}_N)\in L^2(\Omega;\R^N)$ for the single element of $K_{\alpha}$, i.e.,
$K_\alpha=\{w^{(\alpha)}\}$.\smallskip 

b) An example of a nonlocal regularizer satisfying (H$1_{\alpha}$) and (H$2_{\alpha}$) is
\begin{align*}
\mathcal{R}(u):=\int_{\Omega}\int_{\Omega}a(x,y)\,g(u(x)-u(y))\,\dx\,\dy
\quad \text{for $u\in L^2(\Omega)$,}
\end{align*} 
where $g:\R \to [0,\infty)$ is a convex function such that $g^{-1}(0)=\{0\}$ and $a:\Omega\times 
\Omega\to[0,\infty]$ 
is a suitable kernel ensuring that $C_c^{\infty}(\Omega)\subset \textrm{Dom}\,\Rcal$. As an explicit choice, one can take $g(t)=t^p$ for $t\in \R$ and $a(x, y)=|y-x|^{-n-sp}$ for $x,\, y\in \Omega$ with some $s\in (0,1)$ and $p\geq 1$, which corresponds to a fractional Sobolev regularization. \smallskip

c) Assumption (H$3_{\alpha}$) asserts that the regularizer penalizes the noisy images more
than the clean ones on average. This is a  natural condition because any good regularizer
should reflect the prior knowledge on the training data, favoring
the clean images. 
\smallskip

d) The second condition on the data,  (H$4_{\alpha}$), means that the noisy image lies closer to the
clean image than its mean value, which can be considered a reasonable assumption in the case
of moderate noise and a non-trivial ground truth.  Indeed, suppose the noise is bounded by $\norm{u_j^{\eta} - u_j^{c}}_{L^2(\Omega)}\leq
\delta$ for all $j\in\{1,\dots,N\}$  and some $\delta>0$; then, (H$4_{\alpha}$) is satisfied if
\begin{align*}
 \normB{\dashint_{\Omega}u^{c}_j\,\dx-u^{c}_j} _{L^2(\Omega)}> \delta\big(1+|\Omega|^{-\frac12}\big)\enspace
\text{ for all } j\in\{1,...,N\}
 \end{align*}
 because  
\begin{align*}
 \normB{\dashint_{\Omega}u^{\eta}_j\,\dx-u^{c}_j} _{L^2(\Omega)}&\geq \normB{\dashint_{\Omega}u^{c}_j\,\dx-u^{c}_j}
_{L^2(\Omega)}-\normB{\dashint_{\Omega}(u_j^{\eta}-u_j^{c})\,\dx}_{L^2(\Omega)} \\ &> 
\delta\big(1+|\Omega|^{-\frac12}\big)- |\Omega|^{-\frac12} \norm{u_j^{\eta} - u_j^{c}}_{L^2(\Omega)}\\ &\geq
\delta\geq\norm{u_j^{\eta}
- u_j^{c}}_{L^2(\Omega)},
 \end{align*}
 where the second inequality is due to Jensen's inequality.
 \end{remark}

     \color{black}
Next, we prove that the assumptions (H$1_{\alpha}$)--(H$4_{\alpha}$) on the regularization term and
on the training set give rise to optimal weight parameters that stay away from the
extremal regimes, $\alpha=0$ and $\alpha=\infty$.
Thus, in this case, the bi-level parameter optimization procedure preserves
the structure of the original denoising model. 
 \begin{theorem}[Structure preservation]
 \label{thm:pos}
Suppose that {\rm (H$1_{\alpha}$)--(H$4_{\alpha}$)} hold. 
 Then, the learning scheme corresponding to the minimization of $\Ical$ in~\eqref{training_sec2}
admits a
solution $\bar{\alpha}\in (0,\infty)$.
 \end{theorem}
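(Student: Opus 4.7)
The plan is to apply the abstract framework of Section~\ref{sec:general}, specifically Theorem~\ref{theo:relax2} and Corollary~\ref{cor:relax}, to the family $\{\Rcal_{\alpha}\}_{\alpha\in\Lambda}$ with $\Lambda=(0,\infty)$ and compact closure $\overline{\Lambda}=[0,\infty]$. I would first obtain a minimizer $\bar{\alpha}\in\overline{\Lambda}$ of the extended upper-level functional $\overline{\Ical}$, and then exclude $\bar{\alpha}\in\{0,\infty\}$ by means of (H$3_{\alpha}$) and (H$4_{\alpha}$), so that Corollary~\ref{cor:relax}$(iii)$ delivers the desired minimizer of $\Ical$ itself.

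The first step is to compute the Mosco-limits~\eqref{eq:Moscoconvergence} at the boundary. For $\alpha\to 0^{+}$, I claim $\overline{\Rcal}_{0}\equiv 0$, with a strong recovery sequence produced by a diagonal argument based on the density of $\operatorname{Dom}\Rcal$ in $L^{2}(\Omega)$ from (H$1_{\alpha}$). For $\alpha\to\infty$, combining (H$1_{\alpha}$) with the lower semicontinuity in (H$2_{\alpha}$) yields $\overline{\Rcal}_{\infty}=\chi_{C}$, where $C\subset L^{2}(\Omega)$ is the closed subspace of a.e.-constant functions: constants serve as their own recovery sequence, while the liminf inequality follows from the fact that any weak limit with bounded $\alpha\Rcal$-values must lie in $C$. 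In view of the additive representation~\eqref{eq67}, the extended lower-level functionals $\overline{\Jcal}_{\alpha,j}$ then admit the unique minimizers $\overline{K}_{0,j}=\{u_{j}^{\eta}\}$ and $\overline{K}_{\infty,j}=\{\dashint_{\Omega}u_{j}^{\eta}\,\dx\}$, so condition~$(ii)$ of Theorem~\ref{theo:relax2} holds; Corollary~\ref{cor:relax} then provides a minimizer $\bar{\alpha}\in[0,\infty]$ of $\overline{\Ical}$, with explicit boundary values
\[
\overline{\Ical}(0)=\|u^{\eta}-u^{c}\|_{L^{2}(\Omega;\R^{N})}^{2},\qquad\overline{\Ical}(\infty)=\normB{\dashint_{\Omega}u^{\eta}\,\dx-u^{c}}_{L^{2}(\Omega;\R^{N})}^{2}.
\]

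Hypothesis (H$4_{\alpha}$) immediately gives $\overline{\Ical}(0)<\overline{\Ical}(\infty)$, ruling out $\bar{\alpha}=\infty$. The main obstacle is then to exclude $\bar{\alpha}=0$, which amounts to exhibiting some $\alpha>0$ with $\Ical(\alpha)<\overline{\Ical}(0)$. Let $w^{(\alpha)}$ denote the unique element of $K_{\alpha}$ (cf.\ Remark~\ref{rem:H0H3}a), set $r^{\alpha}:=w^{(\alpha)}-u^{\eta}$, and abbreviate $\Rcal(v):=\sum_{j=1}^{N}\Rcal(v_{j})$. From the Mosco-convergence at $\alpha=0$ and the uniqueness of $u^{\eta}$ as minimizer of $\overline{\Jcal}_{0}$, standard $\Gamma$-convergence arguments (in particular, the convergence of energies) force $w^{(\alpha)}\to u^{\eta}$ strongly in $L^{2}(\Omega;\R^{N})$ as $\alpha\to 0^{+}$. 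The lower semicontinuity from (H$2_{\alpha}$) together with (H$3_{\alpha}$) then yields $\liminf_{\alpha\to 0^{+}}\Rcal(w^{(\alpha)})\geq\Rcal(u^{\eta})>\Rcal(u^{c})$. To close the argument, I would test the minimality of $w^{(\alpha)}_{j}$ at the convex combination $w^{(\alpha)}_{j}+t(u_{j}^{c}-w^{(\alpha)}_{j})$, use the convexity of $\Rcal$, divide by $t>0$, and let $t\to 0^{+}$ to obtain the first-order inequality
\[
\alpha[\Rcal(u^{c})-\Rcal(w^{(\alpha)})]\geq 2\dpr{r^{\alpha},\,u^{\eta}-u^{c}}_{L^{2}(\Omega;\R^{N})}+2\|r^{\alpha}\|_{L^{2}(\Omega;\R^{N})}^{2}.
\]
Combined with the elementary expansion $\Ical(\alpha)-\overline{\Ical}(0)=2\dpr{r^{\alpha},u^{\eta}-u^{c}}_{L^{2}(\Omega;\R^{N})}+\|r^{\alpha}\|_{L^{2}(\Omega;\R^{N})}^{2}$, this gives
\[
\Ical(\alpha)-\overline{\Ical}(0)\leq\alpha[\Rcal(u^{c})-\Rcal(w^{(\alpha)})]-\|r^{\alpha}\|_{L^{2}(\Omega;\R^{N})}^{2},
\]
whose right-hand side is strictly negative for all sufficiently small $\alpha>0$ because $\Rcal(w^{(\alpha)})>\Rcal(u^{c})$ in that regime. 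Hence $\bar{\alpha}=0$ is also excluded, and Corollary~\ref{cor:relax}$(iii)$ finally yields a minimizer $\bar{\alpha}\in(0,\infty)$ of $\Ical$.
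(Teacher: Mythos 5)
Your proposal is correct and follows essentially the same route as the paper: Mosco-limits $\overline{\Rcal}_0=0$ and $\overline{\Rcal}_\infty=\chi_C$ as in Proposition~\ref{lem:relax1}, the identification $\overline{K}_0=\{u^\eta\}$ and $\overline{K}_\infty=\bigl\{\dashint_\Omega u^\eta\,\dx\bigr\}$, exclusion of the boundary points via (H$3_\alpha$)--(H$4_\alpha$), and then Corollary~\ref{cor:relax}\,$(iii)$; in particular, your first-order inequality from testing with convex combinations is exactly the subdifferential estimate in the paper's Lemma~\ref{lemma:not-min}\,$(i)$, and your strong convergence $w^{(\alpha)}\to u^\eta$ plus lower semicontinuity replaces the paper's exact limit $\Rcal(w^{(\alpha)}_j)\to\Rcal(u^\eta_j)$ in~\eqref{walphato0} with the same effect. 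The only (harmless) deviation is that you rule out $\alpha=\infty$ directly from (H$4_\alpha$) via $\overline{\Ical}(0)<\overline{\Ical}(\infty)$, whereas the paper's Lemma~\ref{lemma:not-min}\,$(ii)$ exhibits interior parameters beating $\overline{\Ical}(\infty)$; both arguments are valid and lead to the same conclusion.
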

 
A related statement in the same spirit can be found in~\cite[Theorem~1]{DeScVa16}, although some of the details of the proof were not entirely clear to us.
Our proof of Theorem~\ref{thm:pos} is based on a different approach and hinges on the following two lemmas,
the first of which determines the Mosco-limits of the regularizers, and thereby provides an explicit formula of the extension $\overline{\Ical}$
of $\Ical$ as introduced in \eqref{eq:extI}.
 
\begin{proposition}[Mosco-convergence of the regularizer]\label{lem:relax1}
Let $\Rcal:L^2(\Omega)\to[0, \infty]$ satisfy {\rm (H$1_{\alpha}$)} and {\rm (H$2_{\alpha}$)}, and let $\{\Rcal_{\alpha}\}_{\alpha\in (0, \infty)}$ be as in~\eqref{Ralpha}. Then,
\begin{equation}\label{eq:moscoweight}
\overline{\Rcal}_{\alpha} := \text{\rm Mosc}(L^2)\text{-}\lim_{\alpha'\to \alpha} \Rcal_{\alpha'}=\begin{cases}
\Rcal_{\alpha} & \text{for $\alpha \in (0, \infty)$},\\
0 & \text{for $\alpha=0$},\\
\chi_{C} & \text{for $\alpha=\infty$},
\end{cases} 
\end{equation}
for $\alpha \in [0,\infty]$, where $\chi_C$ is the indicator function of $C:=\{u \in L^2(\Omega): u\ \mathrm{ is \ constant}\}$.
\end{proposition}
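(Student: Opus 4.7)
The proof naturally splits into three cases according to the value of $\alpha$, and in each case the two Mosco-convergence inequalities (liminf with respect to weak $L^2$, limsup with respect to strong $L^2$) must be verified. Throughout, I will use that $\Rcal$ is weakly $L^2$-lower semicontinuous thanks to Mazur's theorem applied to the convex and strongly lower semicontinuous functional $\Rcal$ via (H$1_{\alpha}$) and (H$2_{\alpha}$).

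\textbf{Case $\alpha \in (0,\infty)$.} Given a sequence $\alpha'\to\alpha$ in $(0,\infty)$ and $u_{\alpha'}\weakly u$ in $L^2(\Omega)$, the liminf inequality follows by writing $\alpha'\Rcal(u_{\alpha'}) = \alpha\Rcal(u_{\alpha'}) + (\alpha'-\alpha)\Rcal(u_{\alpha'})$, using weak lower semicontinuity of $\Rcal$ on the first term, and observing that the second term is negligible after noting that we may assume $\sup_{\alpha'}\Rcal(u_{\alpha'})<\infty$ (otherwise the inequality is trivial). The constant sequence $u_{\alpha'}=u$ serves as a strong recovery sequence.

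\textbf{Case $\alpha=0$.} The liminf inequality is trivial because $\alpha'\Rcal(u_{\alpha'})\geq 0$. For the (strong) recovery, fix $u \in L^2(\Omega)$; if $u\in\dom\Rcal$, the constant sequence $u_{\alpha'}=u$ works since $\alpha'\Rcal(u)\to 0$. If $u\notin\dom\Rcal$, use the density part of (H$1_{\alpha}$) to pick $v_k\in\dom\Rcal$ with $v_k\to u$ strongly in $L^2(\Omega)$, and construct a recovery sequence by a standard diagonal argument: for any sequence $\alpha'\to 0$, choose $k(\alpha')\to\infty$ slowly enough so that both $v_{k(\alpha')}\to u$ in $L^2(\Omega)$ and $\alpha'\Rcal(v_{k(\alpha')})\to 0$.

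\textbf{Case $\alpha=\infty$.} For the liminf, let $u_{\alpha'}\weakly u$ in $L^2(\Omega)$. If $u\in C$, the inequality is trivial since $\chi_C(u)=0$. Otherwise, suppose for contradiction that $\liminf_{\alpha'\to\infty}\alpha'\Rcal(u_{\alpha'})=:M<\infty$; then, along a subsequence, $\Rcal(u_{\alpha'})\leq (M+1)/\alpha' \to 0$, and weak $L^2$-lower semicontinuity of $\Rcal$ forces $\Rcal(u)=0$. By the characterization of the zero set of $\Rcal$ in (H$1_{\alpha}$), $u\in C$, contradiction. For the recovery, if $u\in C$ the constant sequence $u_{\alpha'}=u$ satisfies $\alpha'\Rcal(u)=0=\chi_C(u)$, while if $u\notin C$ any sequence works since $\chi_C(u)=\infty$.

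The main obstacle, if any, is merely making sure the recovery sequence in the case $\alpha = 0$ can be chosen to converge \emph{strongly} (not just weakly) in $L^2(\Omega)$; this is precisely why the density of $\dom\Rcal$ in $L^2(\Omega)$ from (H$1_{\alpha}$) is needed, and it is what upgrades $\Gamma$-convergence to Mosco-convergence at the boundary point $\alpha=0$. All other steps are short and essentially use only the assumptions (H$1_{\alpha}$)--(H$2_{\alpha}$) together with standard properties of $\Gamma$- and Mosco-convergence.
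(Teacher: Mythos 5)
Your proposal is correct and follows essentially the same route as the paper's proof: weak $L^2$-lower semicontinuity of the convex regularizer (from (H$1_{\alpha}$)--(H$2_{\alpha}$)) for the liminf bounds at $\alpha\in(0,\infty)$ and $\alpha=\infty$, constant recovery sequences in those two regimes, and the density of $\dom\Rcal$ combined with a diagonal/slow-down argument to get a strongly converging recovery sequence at $\alpha=0$. Your closing remark correctly identifies the only delicate point, which is exactly where the paper also invests its (brief) effort.
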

\begin{proof} 
Using standard arguments, we show that  the 
Mosco-limit of $(\Rcal_{\alpha_k})_k$ exists for every sequence $(\alpha_k)_k$ of positive real
numbers with $\alpha_k\to \alpha \in [0,\infty]$, and corresponds to the right hand side of \eqref{eq:moscoweight}. \smallskip 

\textit{Case 1: $\alpha \in (0,\infty)$.}  Using (H$2_{\alpha}$) for the liminf inequality and a constant recovery sequence for the upper bound, we conclude that the Mosco-limit of $(\Rcal_{\alpha_k})_k$ coincides with $\Rcal_{\alpha}$.  
 \smallskip

\textit{Case 2: $\alpha=0$.} The liminf inequality is trivial. For the
recovery sequence, take $u \in L^2(\Omega)$ and let $(u_k)_k \subset {\rm
Dom}\,\Rcal$ converge strongly to $u$ in \(L^2(\Omega) \), which is feasible
due to (H$1_{\alpha}$). By possibly repeating certain entries of the sequence $(u_k)_k$ (not relabeled),
one can slowdown the speed at which $\Rcal(u_k)$ potentially blows up and assume that $\alpha_{k}\Rcal(u_k) \to 0$ as $k \to \infty$.
Thus,
\[
\lim_{k \to \infty} \Rcal_{\alpha_k}(u_k)= \lim_{k \to \infty} \alpha_k\Rcal(u_k) =0. 
\]

\textit{Case 3: $\alpha = \infty$.} The limsup inequality follows by choosing
constant recovery sequences. For the proof of the lower bound, consider 
 $u_k\weakly  u$ in $L^2(\Omega)$ with $r:=\sup_{k\in \N}\alpha_k\Rcal(u_k) = \sup_{k\in \N} \Rcal_{\alpha_k}(u_k)<\infty$. Then, along with the weak lower semicontinuity of $\Rcal$ (see Remark~\ref{rem:H0H3}\,a)),
\begin{align*}
\Rcal(u) \leq \liminf
_{k\to \infty} \Rcal(u_k) &\leq\lim_{k\to
\infty} \frac{r}{\alpha_k} = 0.
\end{align*}
This shows that $\Rcal(u)=0$, which implies by the assumption on the zero level set
of $\Rcal$ in (H$1_{\alpha}$) that $u$ is constant, i.e., $u\in C$.  
\end{proof}

As a consequence of the previous lemma, we deduce that the extension $\overline{\Ical}:\overline{\Lambda} \to [0,\infty]$ of $\Ical$ in the sense of~\eqref{eq:extI} can be explicitly determined as
\begin{equation}\label{Ical_relax_alpha} 
\overline{\Ical}(\alpha) = \begin{cases}\Ical(\alpha)
& \text{for $\alpha\in (0, \infty)$,}\\
 \norm{u^{\eta}-u^{c}}^2_{L^2(\Omega;\R^N)} & \text{for $\alpha=0$,}\\[0.1cm]
 \normB{\displaystyle \dashint_\Omega u^{\eta} \,\dx
- u^{c}}^2_{L^2(\Omega;\R^N)} & \text{for $\alpha=\infty$. }
\end{cases}
\end{equation}
Indeed, a straight-forward calculation of the unique componentwise minimizer of the extended reconstruction functionals $\overline{\Jcal}_{\alpha}$ at the boundary points $\alpha=0$ and $\alpha=\infty$ leads to
\begin{align*}
\overline{K}_0=\{u^\eta\}\quad \text{ and }\quad \overline{K}_\infty=\Bigl\{\dashint_\Omega u^{\eta}\,\dx\Bigr\}. 
\end{align*}
Since the assumptions $(i)$ and $(ii)$ of Theorem~\ref{theo:relax2} are satisfied, 
$\overline{\Ical}$ 
coincides with the relaxation $\Ical^{\rm rlx}$. 
By Corollary~\ref{cor:relax}\,$(i)$, $\overline{\Ical}$ attains its minimum at some $\bar{\alpha} \in [0,\infty]$. The degenerate cases $\bar{\alpha
} \in \{0,\infty\}$ cannot be excluded a priori, but the next lemma shows that the minimum is attained in the interior $(0,\infty)$ under suitable assumptions on the training data. \color{black}
\begin{lemma}
\label{lemma:not-min} Suppose that~{\rm (H$1_{\alpha}$)} and {\rm (H$2_{\alpha}$)} hold, and let $K_\alpha=\{w^{(\alpha)}\}$ with $w^{(\alpha)}=(w_1^{(\alpha)}, \ldots, w_N^{\alpha})\in L^2(\Omega;\R^N)$ for $\alpha\in (0, \infty)$, cf.~Remark~\ref{rem:H0H3}\,a).
\begin{itemize}
\item[$(i)$] Under the additional assumption {\rm (H$3_{\alpha}$)},
there exists ${\alpha}\in (0,\infty)$ such that 
\begin{equation*}
\|w^{(\alpha)}-u^{c}\|_{L^2(\Omega;\R^N)}^2<\|u^{\eta}-u^{c}\|_{L^2(\Omega;\R^N)}^2.
\end{equation*}

\item[$(ii)$] Under the additional assumption {\rm (H$4_{\alpha}$)},
there exists  $\alpha_0\in (0,\infty)$  such that, for all $\alpha\in (0,\alpha_0)$, 
\begin{equation}
\label{eq:smaller_average}
\|w^{(\alpha)}-u^{c}\|_{L^2(\Omega;\R^N)}^2< \Bigl\| \dashint_{\Omega} u^{\eta}\,\dx
-u^{c}\Bigr\|_{L^2(\Omega;\R^N)}^2.
\end{equation}
\end{itemize}
\end{lemma}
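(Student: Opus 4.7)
The plan is to first establish the strong convergence $w^{(\alpha)} \to u^{\eta}$ in $L^{2}(\Omega;\R^{N})$ as $\alpha \to 0^{+}$, which will serve both parts. By Proposition~\ref{lem:relax1}, the Mosco-limit $\overline{\Rcal}_{0}$ is identically zero, so the limit reconstruction functional $\overline{\Jcal}_{0,j}$ reduces to $\|\cdot - u_{j}^{\eta}\|_{L^{2}(\Omega)}^{2}$, whose unique minimizer is $u_{j}^{\eta}$. The strong convergence of minimizers then follows from precisely the same Mosco-based argument carried out in the second half of the proof of Theorem~\ref{theo:relax2}. Alternatively, one can establish it directly by testing the minimality of $w_{j}^{(\alpha)}$ against elements $v \in \mathrm{Dom}\,\Rcal$ approximating $u_{j}^{\eta}$, using the density in (H$1_{\alpha}$) and a diagonal argument.

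Given the strong convergence, part $(ii)$ follows immediately by continuity: $\lim_{\alpha \to 0^{+}}\|w^{(\alpha)} - u^{c}\|^{2}_{L^{2}(\Omega;\R^{N})} = \|u^{\eta} - u^{c}\|^{2}_{L^{2}(\Omega;\R^{N})}$, which is strictly less than $\|\dashint_{\Omega}u^{\eta}\,\dx - u^{c}\|^{2}_{L^{2}(\Omega;\R^{N})}$ by (H$4_{\alpha}$), so the inequality~\eqref{eq:smaller_average} persists on some interval $(0,\alpha_{0})$.

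For part $(i)$, I would exploit the $2$-strong convexity of $\Jcal_{\alpha,j}$, inherited from the fidelity term (while the convex $\alpha\Rcal$ contributes non-negatively). This gives the sharpened minimality inequality $\Jcal_{\alpha,j}(u) \geq \Jcal_{\alpha,j}(w_{j}^{(\alpha)}) + \|u - w_{j}^{(\alpha)}\|_{L^{2}(\Omega)}^{2}$. Plugging in $u = u_{j}^{c}$, dropping the nonpositive term $-\|w_{j}^{(\alpha)} - u_{j}^{\eta}\|^{2}_{L^{2}(\Omega)}$, and summing over $j$ delivers
\begin{equation*}
\|u^{c} - w^{(\alpha)}\|^{2}_{L^{2}(\Omega;\R^{N})} \leq \|u^{c} - u^{\eta}\|^{2}_{L^{2}(\Omega;\R^{N})} + \alpha\sum_{j=1}^{N}\bigl[\Rcal(u_{j}^{c}) - \Rcal(w_{j}^{(\alpha)})\bigr].
\end{equation*}
It then remains to show that the bracketed sum is strictly negative for all sufficiently small $\alpha > 0$. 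Noting that (H$3_{\alpha}$) forces $\Rcal(u_{j}^{c}) < \infty$ for every $j$, the lower semicontinuity from (H$2_{\alpha}$) combined with the strong convergence above gives $\liminf_{\alpha\to 0^{+}}\sum_{j}\Rcal(w_{j}^{(\alpha)}) \geq \sum_{j}\Rcal(u_{j}^{\eta})$; hence
\begin{equation*}
\limsup_{\alpha\to 0^{+}}\sum_{j=1}^{N}\bigl[\Rcal(u_{j}^{c}) - \Rcal(w_{j}^{(\alpha)})\bigr] \leq \sum_{j=1}^{N}\bigl[\Rcal(u_{j}^{c}) - \Rcal(u_{j}^{\eta})\bigr] < 0
\end{equation*}
by (H$3_{\alpha}$), and picking $\alpha$ sufficiently small closes the argument.

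The main (mild) obstacle to be careful about is the case $u_{j}^{\eta} \notin \mathrm{Dom}\,\Rcal$. Then $\Rcal(u_{j}^{\eta}) = \infty$, and one needs the density from (H$1_{\alpha}$) to obtain the strong convergence and to handle the accounting of infinite values in the correction term; the negativity of that term is only reinforced, since $\Rcal(w_{j}^{(\alpha)}) \to \infty$ while $\alpha \Rcal(w_{j}^{(\alpha)}) \to 0$ by the same density-plus-competitor reasoning.
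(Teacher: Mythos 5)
Your proposal is correct, and its skeleton matches the paper's proof: both arguments first establish the strong convergence $w^{(\alpha)}\to u^{\eta}$ in $L^2(\Omega;\R^N)$ as $\alpha\to 0$ (your ``direct'' variant, testing minimality against a competitor $v\in\mathrm{Dom}\,\Rcal$ chosen by the density in (H$1_{\alpha}$), is literally the paper's argument, while the detour through Proposition~\ref{lem:relax1} and the strong-convergence step in the proof of Theorem~\ref{theo:relax2} is a heavier but non-circular alternative), and both deduce $(ii)$ from this convergence together with (H$4_{\alpha}$). The one genuine difference lies in $(i)$: the paper derives the key estimate $\|w^{(\alpha)}-u^{c}\|^2_{L^2(\Omega;\R^N)}\leq \|u^{\eta}-u^{c}\|^2_{L^2(\Omega;\R^N)}+\alpha\sum_{j=1}^N\bigl(\Rcal(u_j^{c})-\Rcal(w_j^{(\alpha)})\bigr)$ from the first-order optimality condition $2(u_j^{\eta}-w_j^{(\alpha)})\in\alpha\,\partial\Rcal(w_j^{(\alpha)})$ and the subgradient inequality, whereas you obtain the very same estimate from the quadratic-growth inequality at the minimizer of the $2$-strongly convex functional $\Jcal_{\alpha,j}$, retaining and then discarding the extra nonnegative terms. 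Your route is slightly more elementary (no subdifferential sum rule is invoked), while the paper's makes the Euler--Lagrange condition explicit; both are sound. The conclusion of $(i)$ is then the same in substance: (H$3_{\alpha}$) forces $\sum_j\Rcal(u_j^{c})<\infty$, and lower semicontinuity (H$2_{\alpha}$) along the strongly convergent minimizers gives $\liminf_{\alpha\to 0}\sum_j\Rcal(w_j^{(\alpha)})\geq \sum_j\Rcal(u_j^{\eta})$ (the paper proves the full limit, but only this liminf bound is needed), so the correction term is strictly negative for all small $\alpha>0$; your bookkeeping of the case $\Rcal(u_j^{\eta})=\infty$ is also fine, since $\Rcal(w_j^{(\alpha)})<\infty$ always holds by minimality and only the liminf estimate enters.
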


\begin{proof} 
We start by providing two useful auxiliary results about the asymptotic behavior of the reconstruction vector $w^{(\alpha)}$ as $\alpha$ tends to zero; precisely,
 \begin{align}\label{walphato0}
\lim_{\alpha\to 0} \norm{w^{(\alpha)}-u^{\eta}}_{L^2(\Omega;\R^N)}
= 0\qquad  \text{and} \qquad\lim_{\alpha\to 0}\Rcal(w_j^{(\alpha)})=
\Rcal(u^{\eta}_j) \ \text{for every $j\in \{1, \ldots, N\}$}. 
\end{align}
Fix $j\in \{1, \ldots, N\}$ and let $(\alpha_k)_k\subset (0,\infty)$ be such that $\alpha_k\to 0$ as
$k\to \infty$. Take $u \in \mathrm{Dom}\,\Rcal$ with $\norm{u-u^\eta_j}^2_{L^2(\Omega)} \leq \varepsilon$ for some $\varepsilon>0$, which is possible by (H$1_{\alpha}$). Then, the minimality of $w_j^{(\alpha_k)}$
for $\Jcal_{\alpha_k, j}$ yields
\begin{align*}
\norm{w_j^{(\alpha_k)}-u_j^{\eta}}^2_{L^2(\Omega)}  \leq \Jcal_{\alpha_k,
j}(w_j^{(\alpha_k)})\leq \Jcal_{\alpha_k, j}(u) = 
 \norm{u-u^{\eta}_j}_{L^2(\Omega)}^2+\alpha_k \Rcal(u)\leq \varepsilon +\alpha_k \Rcal(u). 
\end{align*}
Since $\Rcal(u) < \infty$, we find
\[
\limsup_{k \to \infty} \,\norm{w_j^{(\alpha_k)}-u_j^{\eta}}^2_{L^2(\Omega)} \leq \varepsilon,
\]
which proves the first part of~\eqref{walphato0} due to the arbitrariness of $\varepsilon$.  
Exploiting the minimality of $w_j^{(\alpha)}$ for $\Jcal_{\alpha,j}$ again with $\alpha\in (0, \infty)$ entails
 \begin{align*}
\alpha \Rcal(w_j^{(\alpha)}) = \Rcal_\alpha(w_j^{(\alpha)})  \leq  \Jcal_{\alpha,
j}(w_j^{(\alpha)})\leq \Jcal_{\alpha, j}(u_j^{\eta}) = 
 \Rcal_\alpha(u_j^{\eta}) = \alpha \Rcal(u_j^{\eta});
\end{align*} 
 hence, $\Rcal(w_j^{(\alpha)})\leq \Rcal(u_j^{\eta})$ and,
together with the first part of~\eqref{walphato0} and the lower semicontinuity of $\Rcal$ by (H$2_{\alpha}$), it follows then that
\begin{align*}
\Rcal(u_j^{\eta})\geq  \limsup_{k\to \infty}\Rcal(w_j^{(\alpha_k)})\geq \liminf_{k\to
\infty}\Rcal(w_j^{(\alpha_k)})\geq \Rcal(u_j^{\eta}).
\end{align*}
Thus, $\lim_{k\to \infty}\Rcal(w_j^{(\alpha_k)})=\Rcal(u_j^{\eta})$, showing the second part of~\eqref{walphato0}. \smallskip

Regarding~\((i)\), we observe that
the minimality of $w_j^{(\alpha)}$
for $\Jcal_{\alpha, j}$ for any $\alpha\in (0, \infty)$ and $j\in \{1, \ldots, N\}$ imposes the necessary condition 
$0\in \partial \Jcal_{\alpha, j}(w_j^{(\alpha)})$
or, equivalently, 
\begin{align*}
2(u_j^{\eta}-w_j^{(\alpha)})\in \partial \Rcal_\alpha(w_j^{(\alpha)}) = \alpha
\partial \Rcal(w_j^{(\alpha)}),
\end{align*}
where $\partial \Ccal(u)\in L^2(\Omega)'\cong L^2(\Omega)$ is the subdifferential
of a convex function $\Ccal:L^2(\Omega)\to [0,\infty]$ at $u\in L^2(\Omega)$. 
Then, 
\begin{align*}
\|u_j^{\eta}-u_j^{c}\|_{L^2(\Omega)}^2-\|w_j^{(\alpha)}-u_j^{c}\|_{L^2(\Omega)}^2&=
2\langle u^{\eta}_j-w^{(\alpha)}_j,w^{(\alpha)}_j-u^{c}_j\rangle_{L^2(\Omega)} +\|w^{(\alpha)}_j-u^{\eta}_j\|^2_{L^2(\Omega)}\\ 
& \geq \Rcal_\alpha(w_j^{(\alpha)}) -\Rcal_\alpha(u_j^{c}) = \alpha\bigl(\Rcal(w_j^{(\alpha)})
-\Rcal(u_j^{c})\bigr),
\end{align*}
where $\langle\cdot, \cdot\rangle_{L^2(\Omega)}$ denotes the standard $L^2(\Omega)$-inner product.
 \color{black} 
 Summing both sides over $j \in \{1,\dots,N\}$ results in
\[
\norm{u^{\eta}-u^c}^2_{L^2(\Omega;\R^N)} - \norm{w^{(\alpha)}-u^c}^2_{L^2(\Omega;\R^N)} \geq \alpha \sum_{j=1}^N \bigl(\Rcal(w^{(\alpha)}_j)-\Rcal(u^c_j)\bigr).
\] 
By (H$3_{\alpha}$) in combination with the second part of~\eqref{walphato0}, there exists $\alpha_0>0$ such that 
\[
\textstyle \sum_{j=1}^N\Rcal(w_j^{(\alpha)})
> \sum_{j=1}^N \Rcal(u_j^{c})
\]
for all $\alpha\in(0,\alpha_0)$, so that choosing $\bar{\alpha}\in (0, \alpha_0)$ concludes the proof of $(i)$. \smallskip

To show $(ii)$, we exploit the first limit in~\eqref{walphato0}. Due to~(H$4_{\alpha}$), it follows then for any $(\alpha_k)_k$ of positive real
numbers with $\alpha_k\to 0$ as $k\to \infty$ that
\begin{align*}
\limsup_{k\to \infty}\,\|w^{(\alpha_k)}-u^{c}\|_{L^2(\Omega;\R^N)} &\leq \limsup_{k\to
\infty}\,\|w^{(\alpha_k)}-u^{\eta}\|_{L^2(\Omega;\R^N)}+ \|u^{\eta}-u^{c}\|_{L^2(\Omega;\R^N)}\\
&<\normB{ \dashint_{\Omega} u^{\eta}\,\dx-u^{c}}_{L^2(\Omega;\R^N)},
\end{align*}
which gives rise to~\eqref{eq:smaller_average} for all \(k\) sufficiently large.
\end{proof}

\color{black}
\begin{proof}[Proof of Theorem~\ref{thm:pos}] 
Since $\overline{\Ical}$ in~\eqref{Ical_relax_alpha} attains its infimum at a point $\bar{\alpha}\in (0, \infty)$ 
by Lemma~\ref{lemma:not-min}, we conclude from Corollary~\ref{cor:relax}\,$(iii)$ that $\bar{\alpha}$ is also a minimizer of $\Ical$. 
\end{proof}

Let us finally remark that the assumptions~(H$3_{\alpha}$) and (H$4_{\alpha}$) on the training data are necessary to obtain structure preservation in the sense of Theorem~\ref{thm:pos}.

\begin{remark} 
To see that~(H$3_{\alpha}$) and (H$4_{\alpha}$) can generally not be dropped, consider, for example, a regularizer $\Rcal:L^2(\Omega)\to[0,\infty]$ that satisfies (H$1_{\alpha}$) and (H$2_{\alpha}$) and is $2$-homogeneous, i.e., $\Rcal(\mu u)=\mu^2\Rcal(u)$ for all $u \in L^2(\Omega)$ and $\mu \in \R$. With a single, non-constant noisy image $u^{\eta} \in L^2(\Omega)$, so that $\Rcal(u^{\eta})\not=0$, one has for any $\alpha \in (0,\infty)$ that the quadratic polynomial
\[
\mu\mapsto\Jcal_{\alpha}(\mu u^{\eta})=(1-\mu)^2\norm{u^{\eta}}^2_{L^2(\Omega)}+\mu^2\alpha \Rcal(u^{\eta}),
\]
is not minimized at $\mu=0$ or $\mu=1$ because the derivative with respect to $\mu$ does not vanish there.  Hence, $$w^{(\alpha)} \notin\{0, u^{\eta}\}.$$  If we now take $u^c=0$ and suppose additionally that $u^{\eta}$ has zero mean value, then $\Ical(\alpha)>0$ for all $\alpha \in (0,\infty)$, while clearly $\overline{\Ical}(\infty)=0$, that is, the minimum of $\overline{\Ical}$ is only attained at the boundary point $\alpha=\infty$. Similarly, for $u^c=u^{\eta}$, the unique minimizer of $\overline{\Ical}$ is $\alpha=0$.
\end{remark}

\section{Optimal integrability exponents}\label{sec:4}

Here, we study  the optimization of an integrability parameter, $p$, for a fixed nonlocal regularizer. Our motivation comes from the appearance of different $L^p$-norms in image processing, such as in quadratic,  $TV$, and Lipschitz regularization \cite[Section~4]{PCBC10}. We focus on the parameter range $\Lambda= [1,\infty)$ 
with closure $\overline \Lambda = [1, \infty]$, paying particular attention to the structural change occurring at $p= \infty$. 

Let \(\Omega\subset \RR^n\) be a bounded Lipschitz domain and consider a function $f:\Omega \times \Omega\times \R\times \R\to [0, \infty)$ that is Carath\'eodory, i.e., measurable in the first two and continuous with respect to the last two variables, and
that satisfies the following bounds  and  convexity condition: 
 \begin{itemize} 
\item[(H$1_p$)] 
There exist $M, \delta >0$ and $\beta \in [0,1]$ such that for all $\xi,\zeta \in \R$,
we have\[
 f(x,y,\xi,\zeta) \leq M\left(\frac{\abs{\xi-\zeta}}{\abs{x-y}^{\beta}}+\abs{\xi}+\abs{\zeta}+1\right) \quad\text{for a.e.~$x, y\in \Omega$,}
\]
and
\[
M^{-1}\frac{\abs{\xi-\zeta}}{\abs{x-y}^{\beta}}-M \leq f(x,y,\xi,\zeta) \quad \text{for a.e.~$x, y\in \Omega$ with $\abs{x-y}<\delta$.}
\]

\item[(H$2_p$)] $f$ is separately convex in the second two variables, i.e., $f(x, y, \cdot, \zeta)$ and $f(x, y, \xi, \cdot)$ are convex for a.e.~$x,y \in \Omega$ and every $\xi, \zeta\in \R^n$. 
\end{itemize}

In this setting, we take  $p\in [1, \infty)$ and consider the regularization term $\Rcal_p:L^2(\Omega)\to [0, \infty]$  defined by
\begin{align}\label{eq:Rp}
\mathcal R_p(u) := 
\bigg(\frac{1}{|\Omega\times \Omega|}\int_\Omega\int_\Omega f^p(x,y, u(x), u(y)) \,\dx\,\dy \bigg)^{1/p}.
\end{align}

\begin{remark}\label{rem:p}
a) Since the regularizer $\Rcal_p$ is invariant under symmetrization, one can assume without loss of generality that $f$ is symmetric in both pairs of variables, i.e., $f(x, y, \xi,\zeta) = f(y, x, \zeta, \xi)$ and $f(x, y, \xi,\zeta) = f(x, y, \xi, \zeta)$ for all $x, y\in \Omega$ and $\xi, \zeta\in \R$. \smallskip

b) Let $p$, $q\in [1, \infty)$ with $p>q$.  H\"older's inequality then yields for every $u\in {\rm Dom\,} \Rcal_p=\{u\in L^2(\Omega): \Rcal_p(u)<\infty\}$ that
\begin{align*}
\Bigl(\int_\Omega\int_\Omega f^{p}(x,y, u(x), u(y))\,\dx\, \dy\Bigr)^{1/p} \geq |\Omega\times\Omega|^{\frac{q-p}{pq}} \Bigl(\int_\Omega\int_\Omega f^q(x,y, u(x), u(y))\, \dx\, \dy \Bigr)^{1/q},
\end{align*}
which translates into $\Rcal_p(u)\geq 
 \Rcal_q(u)$; in particular, ${\rm Dom\,} \Rcal_p \subset {\rm Dom\,} \Rcal_q$.
\end{remark}

 A basic example of  a symmetric Carath\'eodory function $f$ satisfting (H$1_p$) with $\beta=0$ and (H$2_p$)  is 
\begin{align*}
f(x, y, \xi, \zeta) = a(x-y) |\xi-\zeta|\quad \text{for  $x$, $y\in \Omega$ and $\xi$, $\zeta\in \R$},
\end{align*} 
where $a \in L^{\infty}(\R^n)$ is an even function such that \(\essinf_{\R^n} a >0\). 
Another example of such a function \(f\) with $\beta=1$ in (H$1_p$) is \[
f(x, y, \xi, \zeta) =b\frac{|\xi-\zeta|}{\abs{x-y}}\quad \text{for  $x$, $y\in \Omega$ and $\xi$, $\zeta\in \R$,}
\] 
with $b>0$; note that for the  \(p>n\) case, the corresponding regularizer $\Rcal_p$ is, up to a multiplicative constant, the Gagliardo semi-norm of the fractional Sobolev space $W^{1-\frac{n}{p},p}(\Omega)$. \smallskip

Before showing how the framework of Section~\ref{sec:general} can be applied here, let us first collect and discuss a few properties of the regularizers $\Rcal_p$ with $p\in [1, \infty)$: 

\begin{itemize}
\item[$(i)$] \textit{Upper and lower bounds.} As a consequence of the coercivity bound on the double-integrand $f$ in (H$1_p$), one can deduce constants $C,\, c>0$, depending on $n$, $p$, $\Omega$, $M$, $\delta$, and $\beta$, such that  
\begin{align}\label{est_Lp<Rp}
\norm{u}_{L^p(\Omega)} \leq C\bigl(\Rcal_p(u) + \norm{u}_{L^2(\Omega)}+1\bigr) 
\end{align}
and \begin{align}\label{est_upbeta<Rp}
[u]_{p, \beta}\leq c\bigl(\Rcal_p(u) + \norm{u}_{L^p(\Omega)}+1\bigr)
\end{align}
for all $u\in L^{2}(\Omega)$, where
\[
[u]_{p, \beta}:=\left(\int_\Omega\int_\Omega \frac{|u(x)-u(y)|^p}{|x-y|^{\beta p}}\,\dd{x}\,\dd{y}\right)^{1/p}.
\]
In fact, for \eqref{est_Lp<Rp}, we use the nonlocal Poincar\'{e} inequality in \cite[Proposition~4.2]{BeMC14}, which also holds for $u \in L^2(\Omega)$ via a truncation argument.
From the upper bound in (H$1_p$), we conclude for all $u\in L^2(\Omega)$ that 
\begin{align}\label{est_Rp<upbeta}
\Rcal_p(u)\leq  C\bigl([u]_{p,\beta} + \norm{u}_{L^p(\Omega)}+1\bigr)
\end{align}
with a constant $C=C(p, \Omega, M)>0$.
\smallskip

\item[$(ii)$] \textit{Characterization of the domain.} By combining~\eqref{est_Lp<Rp} and~\eqref{est_upbeta<Rp} with \eqref{est_Rp<upbeta}, it holds for any $p\in [1, \infty)$ that
\begin{align*}
\text{Dom\,} \Rcal_p=\bigl\{u\in L^p(\Omega)\cap L^2(\Omega): [u]_{p, \beta} 
<\infty\bigr\}.
\end{align*}
In particular, $C_c^\infty(\R^n) \subset \text{Dom\,}\Rcal_p$, 
where the functions in $C^\infty_c(\R^n)$  
are implicitly restricted to $\Omega$. 
We observe that for $\beta p > n$, the quantity $[u]_{p,\beta}$ corresponds to the Gagliardo semi-norm of the fractional Sobolev space $W^{\beta-\frac{n}{p},p}(\Omega)$ (cf.~e.g.~\cite{DPV12}),  and so 
\begin{align}\label{chardom}
\text{Dom\,}\Rcal_p=W^{\beta-\frac{n}{p}, p}(\Omega) \cap L^2(\Omega). 
\end{align}
If $\beta p < n$, a simple computation shows that $[u]_{p,\beta}<\infty$ for all $u \in L^p(\Omega)$, which implies $\text{Dom\,}\Rcal_p = L^p(\Omega)\cap L^2(\Omega)$. \smallskip

\item[$(iii)$] \textit{Extension property.}  For any $u \in {\rm Dom\,}\Rcal_p$, there is a $\bar{u} \in L^p(\R^n)\cap L^2(\R^n)$ with compact support inside some bounded open set \(\Omega'\) with $\Omega \subset \Omega' \subset \R^n$ satisfying $\bar{u} = u$ on $\Omega$ and 
\begin{align}\label{extension'}
\int_{\Omega'}\int_{\Omega'} \frac{\abs{\bar{u}(x)-\bar{u}(y)}^p}{\abs{x-y}^{\beta p}}\,\dd{x}\,\dd{y} <\infty.
\end{align}
Indeed, if \(\beta>\frac{n}{p}\), 
this follows directly from well-established extension results for fractional Sobolev spaces on $\Omega$ to those on $\R^n$ (cf.~\cite[Theorem~5.4]{DPV12}), considering~\eqref{chardom}. 
If $1\leq \beta p \leq n$, the map $x \mapsto \abs{x-y}^{-\beta p}$ is no longer integrable at infinity, yet, minor modifications to the arguments in \cite[Section~5]{DPV12} allow us to still deduce~\eqref{extension'}. \smallskip

\item[$(iv)$] \textit{Smooth approximation.} 
 For every $u\in \text{Dom\,}\Rcal_p$, there exists a sequence $(u_{l})_l \subset C_c^{\infty}(\R^n)$ such that $u_l \to u$ in  $L^p(\Omega)$ and $\lim_{l\to \infty}\Rcal_p(u_l) = \Rcal_p(u)$ as $l \to \infty$. 
 
For the proof of this statement, let $\bar u$ be an extension of $u$ as in $(ii)$. 
 We define $u_l = \ffi_{1/l}*\bar{u} \in C_c^{\infty}(\R^n)$ for $l \in \N$ with $(\ffi_{\varepsilon})_{\varepsilon>0}$ a family of smooth standard mollifiers  satisfying $0\leq \ffi_\varepsilon\leq 1$ and  $\int_{\R^n} \ffi_\varepsilon\,
\dd{x}=1$, and  whose support lies in the ball centered at the origin and with radius $\varepsilon>0$,  $\supp \ffi_\varepsilon \subset B_\varepsilon(0)\subset \R^n$. Then,  $u_l \to u$ in $L^p(\Omega)$ and $u_l\to u$ pointwise a.e.~in $\Omega$ as $l \to \infty$. 
 To show that Lebesgue's dominated convergence theorem can be applied, we use the upper bound in (H$1_p$) to derive the following estimate for any $l\in \N$:
 \begin{align}\label{est_fp}
f^p(x, y, u_l(x), u_l(y)) &\leq 4^{p-1}M^p\,\left(\frac{|u_l(x)-u_l(y)|^p}{|x-y|^{\beta p}}+ |u_l(x)|^p + |u_l(y)|^p+1\right)  
\end{align}
for a.e. $(x,y)\in \Omega\times \Omega$. 
By Jensen's inequality and Fubini's theorem, 
\begin{align*} 
[u_l]_{p, \beta}^p& \leq \int_{B_{1/l}(0)}\ffi_{1/l}(z)\int_{\Omega}\int_{\Omega} \frac{\abs{\bar u(x-z)-\bar u(y-z)}^p}{\abs{x-y}^{\beta p}}\,\dd{x}\,\dd{y}\,\dd{z} \\ 
&\leq  \int_{\Omega_{1/l}}\int_{\Omega_{1/l}}\frac{\abs{\bar u(x)-\bar u(y)}^p}{\abs{x-y}^{\beta p}}\,\dd{x}\,\dd{y}<\infty,
\end{align*}
with $\Omega_{1/l} = \{ x \in \R^n\,:\, d(x,\Omega) < 1/l\}$; thus,  $\limsup_{l\to \infty} [u_l]^p_{p, \beta}\leq [u]_{p, \beta}^p$.
Conversely,  the a.e.~pointwise convergence of the mollified sequence gives $\liminf_{l \to \infty} [u_l]^p_{p, \beta} \geq [u]^p_{p, \beta}$ by Fatou's lemma.  Along with the $L^p$-convergence of $(u_l)_l$, the upper bound in~\eqref{est_fp} is thus a converging sequence in $L^1(\Omega\times \Omega)$. This concludes the proof of $\lim_{l \to \infty} \Rcal_p(u_l)= \Rcal_p(u)$. \smallskip

\item[$(v)$]   \textit{Weak lower semicontinuity.} The regularizer $\Rcal_p$ is $L^2$-weak lower semicontinuous.
This is an immediate consequence of the nonnegativity of $f$ and (H$2_p$), see e.g.~\cite[Theorem~2.5]{Ped16} or~\cite{Mun09}; more generally, we refer to~\cite{BeMC18, Elb11, Ped97} for a discussion on sufficient (and necessary) conditions for the weak lower semicontinuity of inhomogeneous double-integral functionals. 
\end{itemize}
\smallskip

Observe that $(ii)$ and $(v)$ imply in particular that the hypothesis \eqref{eq:assumptions} from Section~\ref{sec:general} is fulfilled.

Given a collection of noisy images $u^{\eta}\in L^2(\Omega;\R^N)$ and $p\in [1, \infty)$, we set, for each $j\in\{1,\cdots, N\}$, 
\begin{align*}
\Jcal_{p, j}(u) := \|u-u^{\eta}_j\|^2_{L^2(\Omega)}+ \mathcal R_{p}(u) \qquad\text{for $u\in L^2(\Omega)$,}
\end{align*}
with $K_{p,j}:=\argmin \Jcal_{p,j} \not = \emptyset$ since \eqref{eq:assumptions} is satisfied. As in \eqref{training}, we define $\Ical:[1, \infty)\to
[0, \infty)$ by
  \begin{align*}
  \Ical(p) = \inf_{w \in K_p}\|w-u^{c}\|^2_{L^2(\Omega;\R^N)}
\enspace \text{ for  \(p\in[1,\infty)\)}, \quad
  \end{align*}
where $K_p = K_{p,1} \times K_{p,2} \times \cdots \times K_{p,N}$. Next, we prove the Mosco-convergence result that will provide us with an extension of $\Ical$ to $\overline{\Lambda}=[0, \infty]$. It is an $L^p$-approximation statement in the present nonlocal setting, which can be obtained from a modification of the arguments by Champion, De Pascale, \& Prinari \cite{ChPaPr04} in the local case, 
 and those by Kreisbeck, Ritorto, \& Zappale \cite[Theorem~1.3]{KRZ22}, where the case of homogeneous double-integrands is studied.

\begin{proposition}[Mosco-convergence of the regularizers]\label{prop:Mosco_p}
Let $\Lambda=[1,\infty)$, $\Rcal_p$ for $p\in [1, \infty)$ as in~\eqref{eq:Rp}, and suppose that {\rm (H$1_p$)--(H$3_p$)} are satisfied. Then,
for $p \in \overline{\Lambda}=[1,\infty]$,\begin{align}\label{Mosco_result_p}
\overline{\Rcal}_{p}:=\text{\rm Mosc}(L^2)\text{-}\lim_{p'\to p}\Rcal_{p'}=\begin{cases}
\Rcal_p &\text{if} \ p \in [1,\infty),\\
\Rcal_{\infty} &\text{if} \ p=\infty,
\end{cases} \quad \text{}
\end{align}
with $\Rcal_\infty:L^2(\Omega)\to [0, \infty]$ given by
\begin{align*}
\mathcal R_\infty (u):= \esssup_{(x,y)\in \Omega\times \Omega} f(x,y, u(x), u(y)). 
\end{align*}
\end{proposition}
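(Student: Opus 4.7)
The idea is to reinterpret $\Rcal_p(u)$ as $\|g_u\|_{L^p(\mu)}$, where $g_u(x,y):=f(x,y,u(x),u(y))$ and $\mu$ is the normalized Lebesgue measure on $\Omega\times\Omega$; the proposed limit $\Rcal_\infty(u)$ is then precisely $\|g_u\|_{L^\infty(\mu)}$. Since $\mu$ is a probability measure, the map $r\mapsto\Rcal_r(u)$ is nondecreasing on $[1,\infty]$ (Remark~\ref{rem:p}\,b)). Combining dominated convergence (using $g_u^q\leq g_u^p+1$ when $q\leq p$) in the finite case with Fatou's lemma plus the monotonicity of $q\mapsto\|g_u\|_{L^q(\mu)}$ in the divergent case, one obtains $\lim_{q\nearrow p}\Rcal_q(u)=\Rcal_p(u)$ for every $p\in[1,\infty]$ and every $u\in L^2(\Omega)$, regardless of whether this limit is finite or not. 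Fix now an arbitrary sequence $(p_k)_k\subset[1,\infty)$ with $p_k\to p\in\overline\Lambda=[1,\infty]$ and verify the liminf and limsup inequalities separately.

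For the liminf inequality, take $u_k\weakly u$ in $L^2(\Omega)$ and any $q\in[1,p)$ (respectively $q\in[1,\infty)$ when $p=\infty$). For $k$ sufficiently large, $p_k>q$, and the monotonicity gives $\Rcal_q(u_k)\leq\Rcal_{p_k}(u_k)$; the weak $L^2$-lower semicontinuity of $\Rcal_q$ from property $(v)$ then yields $\Rcal_q(u)\leq\liminf_k\Rcal_{p_k}(u_k)$. Sending $q\nearrow p$ and invoking the continuity established above produces the required estimate $\Rcal_p(u)\leq\liminf_k\Rcal_{p_k}(u_k)$ in both the finite and the infinite case.

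For the limsup inequality at $p=\infty$, the constant sequence $u_k=u$ already works: either $\Rcal_\infty(u)=\infty$ (and nothing is to be shown) or $g_u\in L^\infty(\mu)\subset\bigcap_{r\geq 1}L^r(\mu)$, in which case the classical convergence $\|g_u\|_{L^{p_k}(\mu)}\to\|g_u\|_{L^\infty(\mu)}$ on the finite measure space $(\Omega\times\Omega,\mu)$ concludes the argument. For $p\in[1,\infty)$, however, the constant sequence can fail because $\mathrm{Dom}\,\Rcal_p\not\subset\mathrm{Dom}\,\Rcal_{p_k}$ when $p_k>p$ in general, so $\Rcal_{p_k}(u)$ may be infinite. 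The plan is to circumvent this via the smooth approximation in $(iv)$: for $u\in\mathrm{Dom}\,\Rcal_p$, choose mollifications $(u_l)_l\subset C_c^\infty(\R^n)$ with $u_l\to u$ in $L^p(\Omega)\cap L^2(\Omega)$ and $\Rcal_p(u_l)\to\Rcal_p(u)$. Since each $u_l$ is Lipschitz with compact support and $\beta\in[0,1]$, the upper bound in (H$1_p$) forces $g_{u_l}\in L^\infty(\Omega\times\Omega)$, so $\Rcal_r(u_l)$ is finite and continuous in $r\in[1,\infty]$; in particular $\Rcal_{p_k}(u_l)\to\Rcal_p(u_l)$ as $k\to\infty$ for each fixed $l$. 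A standard diagonal extraction then produces a sequence $u_{l(k)}\to u$ strongly in $L^2(\Omega)$ with $\Rcal_{p_k}(u_{l(k)})\to\Rcal_p(u)$, which is the required recovery sequence.

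The crux of the argument is precisely this finite-$p$ limsup step: the lack of monotonicity of the domains $\mathrm{Dom}\,\Rcal_r$ in $r$ prevents the use of a constant recovery sequence, and the smooth approximation property $(iv)$ combined with the boundedness of $g_{u_l}$ for Lipschitz inputs (which rests on $\beta\leq 1$) is what makes the diagonal construction go through. Once this is in place, the structure of the proof mirrors the local $L^p$-approximation results of~\cite{ChPaPr04} and~\cite{KRZ22}, adapted to the present inhomogeneous double-integral setting.
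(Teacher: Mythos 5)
Your proof is correct, and for finite $p$ (both inequalities) as well as for the recovery sequences at $p=\infty$ it follows essentially the paper's path: monotonicity in the exponent plus weak lower semicontinuity of $\Rcal_q$ and Fatou for the lower bound, and the smooth approximation property $(iv)$ with dominated convergence and a diagonal extraction for the upper bound (one cosmetic point: for $p=1$ your interval $[1,p)$ is empty, but the same step works verbatim with $q=1$, since $p_k\geq 1$ for all $k$, exactly as the paper's choice ``$s=1$ if $p=1$''). The genuine difference is the lower bound at $p=\infty$. The paper argues via Young measures: it takes the Young measure $\{\nu_x\}$ generated by $(u_k)_k$, uses \cite[Proposition~2.3]{Ped97} to pass to the product measure $\{\nu_x\otimes\nu_y\}$ generated by the nonlocal fields, applies Young-measure lower semicontinuity to $\Rcal_q$, and then, letting $q\to\infty$, invokes the Jensen-type inequality for separately convex functions from \cite[Lemma~3.5]{KrZ20} to come back from the Young measure to the barycenter $u$. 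You instead fix $q$, use the monotonicity $\Rcal_q\leq \Rcal_{p_k}$ for $p_k\geq q$ (Remark~\ref{rem:p}\,b)) together with the weak $L^2$-lower semicontinuity of $\Rcal_q$ from property $(v)$ applied directly at the weak limit $u$, and then send $q\to\infty$ using the elementary fact that $\Rcal_q(u)=\norm{g_u}_{L^q(\mu)}\nearrow \norm{g_u}_{L^\infty(\mu)}=\Rcal_\infty(u)$ on the normalized (probability) measure space, which holds for any measurable $g_u$, also when the limit is infinite. This is a valid and genuinely more elementary route: it bypasses the Young-measure machinery entirely and relies only on ingredients already established before the proposition (property $(v)$ and the monotonicity remark), with the two limits taken in the more convenient order (first $k$ at fixed $q$, then $q$ at the fixed limit function). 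What the paper's argument buys is a Young-measure representation of the asymptotic lower bound, in the spirit of the relaxation theory for nonlocal supremal functionals in \cite{KrZ20, KRZ22}; for the statement of the proposition itself, your shortcut suffices.
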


\begin{proof}
To show  \eqref{Mosco_result_p}, it suffices to show that for every sequence $(p_k)_k \subset [1,\infty)$  converging to \(p \in [1,\infty]\), \eqref{Mosco_result_p} holds with \(p'\) replaced by \(p_{k}\). We divide the proof into two cases. \smallskip

\textit{Case~1: $p \in [1,\infty)$.} 
For the recovery sequence, consider $u \in \mathrm{Dom}\,\Rcal_p$ and take $(u_l)_{l}\subset C_c^\infty(\R)$ as in $(iv)$, satisfying
  $u_l \to u$ in $L^p(\Omega)$ and $\Rcal_p(u_l) \to \Rcal_p(u)$ as $l \to \infty$.  
 In view of~$(ii)$, we know that $(u_l)_l$ is contained in ${\rm Dom\,}\Rcal_{p}$ and ${\rm Dom\,}\Rcal_{p_k}$ for all $k\in \N$, and we conclude via Lebesgue's dominated convergence theorem that
\[
\lim_{k \to \infty} \Rcal_{p_k}(u_l) = \Rcal_p(u_l)
\]
for every $l \in \N$. Hence,
\begin{align*}
\lim_{l\to \infty}\lim_{k\to \infty} \Rcal_{p_k}(u_l) = \lim_{l\to \infty}\Rcal_{p}(u_l) = \Rcal_{p}(u),
\end{align*} 
so that one can find a recovery sequence by extracting an appropriate diagonal sequence. 

To prove the lower bound, let $u_k\weakly u$ in $L^2(\Omega)$ be such that $\lim_{k\to \infty} \Rcal_{p_k}(u_k)=\liminf_{k\to \infty} \Rcal_{p_k}(u_k)<\infty$, and 
fix $s\in (1, p)$ (or $s=1$ if $p=1$). Observe that $p_k\geq s$ for all $k$ sufficiently large because  $p_k\to p$ for $k\to \infty$. Then, Remark~\ref{rem:p}\,b) and the weak lower semicontinuity of $\Rcal_s$ according to $(v)$ imply that
\begin{align*}
\lim_{k\to \infty} \Rcal_{p_k}(u_k) \geq \liminf_{k\to \infty} \Rcal_{s}(u_k)\geq \Rcal_s(u).
\end{align*} 
If $s=p=1$ the argument is complete, whereas in the case $p>1$, an additional application of Fatou's lemma shows $\liminf_{s \nearrow p} \Rcal_s(u) \geq \Rcal_p(u)$, giving rise to the desired liminf inequality. \smallskip

\textit{Case~2: $p=\infty$.} That constant sequences serve as recovery sequences results from the observation that $\Rcal_{p_k}(u) \to \Rcal_{\infty}(u)$ as $k \to \infty$ for all $u \in \mathrm{Dom}\,\Rcal_{\infty}$. The latter is an immediate consequence of classical $L^p$-approximation, i.e., the well-known fact that $\lim_{p\to \infty}\norm{v}_{L^p(V)} = \norm{v}_{L^\infty(V)}=\esssup_{x\in V}|v(x)|$ for all $v\in L^\infty(V)$ with $V\subset \R^m$ open and bounded.

To prove the lower bound, we argue via Young measure theory (see, e.g.,~\cite{Ped97,FoL07} for a general
introduction). Let $u_k\weakly u$ in $L^2(\Omega)$, and denote by $\nu=\{\nu_x\}_{x\in\Omega}$  the Young measure generated by a (non-relabeled) subsequence of $(u_k)_k$. The barycenter of $[\nu_x]:= \int_{\R} \xi \, \dd{\nu_x}(\xi)$ then coincides with $u(x)$ for a.e.~$x \in \Omega$.
Without loss of generality, one can suppose that $\infty>\liminf_{k\to \infty}\Rcal_{p_k}(u_k)=\lim_{k\to \infty}\Rcal_{p_k}(u_k)$. 
Recalling Remark~\ref{rem:p}\,b), we have that
\begin{align}\label{356}
\lim_{k\to \infty}\Rcal_{p_k}(u_k) \geq \liminf_{q\to \infty} \liminf_{k\to \infty}\Rcal_{q}(u_k). 
\end{align}

 On the other hand,  with the nonlocal field $v_u$ associated with some $u:\Omega\to \R$ defined by 
\begin{align*}
v_u(x,y):=(u(x), u(y))\qquad\text{ for $(x,y)\in \Omega\times \Omega$},
\end{align*}
the statement of~\cite[Proposition~2.3]{Ped97} allows us to extract a subsequence $(v_{u_k})_k$ that generates the Young measure $\{\nu_x\otimes \nu_y\}_{(x,y)\in \Omega\times \Omega}$. Hence, a standard result on Young measure lower semicontinuity (see e.g.~\cite[Section~8.1]{FoL07}) yields
\begin{align*}
\liminf_{k\to \infty} \Rcal_{q}(u_k) & \geq\Bigl(\frac{1}{|\Omega\times \Omega|} \int_\Omega\int_\Omega\int_{\R }\int_{\R } f^q(x,y,\xi, \zeta) \,\dd \nu_x(\xi)\,\dd \nu_{y}(\zeta)\,\dd{x}\,\dd{y}\Bigr)^{1/q}. 
\end{align*}

Letting $q\to \infty$, we use classical $L^p$-approximation results and the Jensen's type inequality for separately convex functions in~\cite[Lemma~3.5]{KrZ20} to conclude that 
\begin{align*}
\liminf_{q\to \infty}\liminf_{k\to \infty} \Rcal_{q}(u_k) & 
\geq \esssup_{(x, y)\in \Omega\times \Omega} (\nu_x\otimes \nu_y)\text{-}\esssup_{(\xi, \zeta)\in \R \times \R } f(x,y, \xi, \zeta) \\ &\geq \esssup_{(x, y)\in \Omega\times \Omega} f(x,y, [\nu_x], [\nu_y]) \\ &= \esssup_{(x, y)\in \Omega\times \Omega} f(x,y, u(x), u(y)) = \Rcal_\infty(u).
\end{align*}
 Finally, the lower bound follows from the previous estimate and~\eqref{356}.
\end{proof}

The above result implies that the reconstruction functional for $p=\infty$ and $j \in \{1,\cdots,N\}$ is given by
\begin{align*}
\overline{\Jcal}_{\infty, j}(u) := \|u-u^{\eta}_j\|^2_{L^2(\Omega)}+ \mathcal R_{\infty}(u) \qquad\text{for $u\in L^2(\Omega)$.}
\end{align*}
Under the additional convexity condition on the given function $f:\Omega\times \Omega\times\R^n\times \R^n\to \R$ that 
\begin{itemize}
\item[(H$3_p$)] $f$ is (jointly) level convex in its last two variables,
\end{itemize}
where level convexity means convexity of the sub-level sets of the function, the supremal functional $\Rcal_\infty$ also becomes level convex. In combination with the strict convexity of the fidelity term, the reconstruction functional $\overline{\Jcal}_{\infty,j}$ then admits a unique minimizer. Since level convexity is weaker than convexity, we do not necessarily have that $\Jcal_{p,j}$ for $p \in [1,\infty)$ is (level) convex, and it may have multiple minimizers.

If we suppose that $f$ fulfills (H$1_p$)--(H$3_p$), then Theorem~\ref{theo:relax2} and Proposition~\ref{prop:Mosco_p} imply that the extension $\overline{\Ical}:[1,\infty] \to [0,\infty]$ is given by
\begin{align*}
\overline{\Ical}(p) = \begin{cases}
\Ical(p) & \text{for $p\in [1, \infty)$,}\\
\norm{w^{(\infty)}-u^c}^2_{L^2(\Omega;\R^N)} & \text{for $p=\infty$,}
\end{cases} \quad 
\end{align*}
\text{for $p \in [1,\infty]$,}
where $w^{(\infty)}$ denotes the unique componentwise minimizer of $\overline{\Jcal}_{\infty}$. 
In particular, the hypothesis $(ii)$ of Theorem~\ref{theo:relax2} is satisfied, which shows that $\overline{\Ical}$ is the relaxation of $\Ical$ and, thus, admits a minimizer $\bar{p} \in \overline{\Lambda}=[1,\infty]$.

 We conclude this section with a discussion of examples when optimal values of the integrability exponents are obtained in the interior of the original interval $\Lambda$ or at its boundary, respectively. In one case, the presence of noise causes $\Rcal_{\infty}$ to penalize $u^c$ more than $u^{\eta}$, while $\Rcal_q$ for some $q \in [1,\infty)$ prefers the clean image. This entails that the optimal parameter is attained in $\Lambda=[1,\infty)$. In the second case instead, the reconstruction functional for $p=\infty$ gives back the exact clean image and outperforms the reconstruction functionals for other parameter values. 
\begin{example}\label{ex:exponent}
a) Let $f=\alpha \widehat{f}:\Omega\times \Omega\times \R^n\times \R^n\to \R$, for some $\alpha >0$ to be specified later, be a double-integrand satisfying (H$1_p$),  (jointly) convex in the last two variables, and vanishing exactly on $\{(x,y,\xi,\xi)\,:\, x,y \in \Omega, \ \xi \in \R\}$. Following~\eqref{eq:Rp}, we set $$\Rcal_p(u)= \alpha \bigg(\frac{1}{|\Omega\times \Omega|}\int_\Omega\int_\Omega \widehat{f}^p(x,y, u(x), u(y)) \,\dd{x}\,\dd{y} \bigg)^{1/p} =: \alpha \widehat{\Rcal}_p(u)$$ for $u\in L^2(\Omega)$ and $p\in [1, \infty)$. 

We further introduce the following two conditions on the given data $u^\eta, u^c\in L^2(\Omega;\R^N)$: \smallskip
 \begin{itemize} 
\item[(H$4_p$)] $\sum_{j=1}^N\Rcal_q(u_j^c)<\sum_{j=1}^N\Rcal_q(u_j^{\eta}) \ \text{for some $q \in [1,\infty)$};$\medskip
\item[(H$5_p$)] $\sum_{j=1}^N\Rcal_\infty(2u_j^{\eta} -u_j^c) < \sum_{j=1}^N\Rcal_\infty(u^{\eta}_j).$
\end{itemize} 
\color{black}
\smallskip
By applying Lemma~\ref{lemma:not-min}\,$(i)$ from the previous section with $\Rcal=\widehat{\Rcal}_q$ --- the conditions (H$1_\alpha$), (H$2_\alpha$), and (H$3_\alpha$) are immediate to verify in view of $(ii)$, $(v),$ and (H$4_p$) --- we can then deduce for small enough $\alpha$ that $\overline{\Ical}(q) < \norm{u^{\eta}-u^{c}}^2_{L^2(\Omega;\R^N)}$. \color{black} On the other hand, due to (H$5_p$), the same lemma can be applied to $\Rcal=\widehat{\Rcal}_\infty$ with $\widehat{\Rcal}_\infty(u)=\esssup_{(x, y)\in \Omega\times \Omega}  \widehat f(x, y, u(x),u(y))$ for $u\in L^2(\Omega)$ to find 
\begin{equation}
\label{eq:Iinfty-worse}
\|w^{(\infty)}-(2u^{\eta}-u^c)\|_{L^2(\Omega;\R^N)}^2<\|u^{\eta}-u^{c}\|_{L^2(\Omega;\R^N)}^2,
\end{equation}
provided $\alpha$ is sufficiently small. 
\color{black} The reverse triangle inequality then yields 
\begin{align*}
&\overline{\Ical}(\infty) \geq \left(\norm{w^{(\infty)}-(2u^{\eta}-u^c)}^2_{L^2(\Omega;\R^N)}-2\norm{u^{\eta}-u^{c}}^2_{L^2(\Omega;\R^N)}\right)^2\\
&\qquad> \norm{w^{(\infty)}-(2u^{\eta}-u^c)}^2_{L^2(\Omega;\R^N)}>
 \norm{u^{\eta}-u^{c}}^2_{L^2(\Omega;\R^N)} > \overline{\Ical}(q),
\end{align*}
where in the second and third inequality we have used \eqref{eq:Iinfty-worse}.
This  proves that the optimal parameter is attained inside $[1,\infty)$, and, therefore, is also a minimizer of~$\Ical$. \smallskip

\color{black}
b) We illustrate a) with a specific example. Consider $\Omega = (0,1)$ and let $\widehat f(x,y,\xi,\zeta)= \abs{\xi-\zeta}/\abs{x-y}$ for $x$, $y\in \Omega$ and $\xi$, $\zeta\in \R^n$.  
\color{black} This leads then to the difference quotient regularizers
\begin{align}\label{Rcalp}
\Rcal_p(u) = \alpha \left(\int_0^1\int_0^1 \frac{\abs{u(x)-u(y)}^p}{\abs{x-y}^p}\, \dx \, \dy\right)^{1/p} =: \alpha \widehat{\Rcal}_p(u)
\end{align}
and
\begin{align}\label{Rcalinfty}
\Rcal_{\infty}(u) = \alpha \esssup_{(x,y) \in (0,1)^2}\frac{\abs{u(x)-u(y)}}{\abs{x-y}} = \alpha \mathrm{Lip}(u),
\end{align}
with $\mathrm{Lip}(u)$ denoting the Lipschitz constant of (a representative of) $u$, which could be infinite.

With the sawtooth function
\(v: [0,1] \to \R\) defined by\[
 v(x) = \begin{cases}
x \quad&\text{for $0\leq x\leq 1/4$},\\
-x+1/2 \quad&\text{for $1/4 <x \leq 3/4$},\\
x-1 \quad&\text{for $3/4 < x \leq 1$}, 
\end{cases}
\]
 we take a single clean and noisy image given by
\[
u^{c}(x) = \begin{cases}
0 &\text{for $0< x\leq 1/3$},\\
10v(3x-1) &\text{for $1/3 <x \leq 2/3$}\\
0 &\text{for $2/3 < x < 1$}.
\end{cases}
\quad
\text{and}
\quad
u^{\eta}(x) = \begin{cases}
v(3x) &\text{for $0< x\leq 1/3$},\\
(10-\varepsilon)v(3x-1) &\text{for $1/3 <x \leq 2/3$},\\
v(3x-2) &\text{for $2/3 < x < 1$},
\end{cases}
\]
respectively, where $\varepsilon >0$ is small; see Figure~\ref{fig:exponent}.
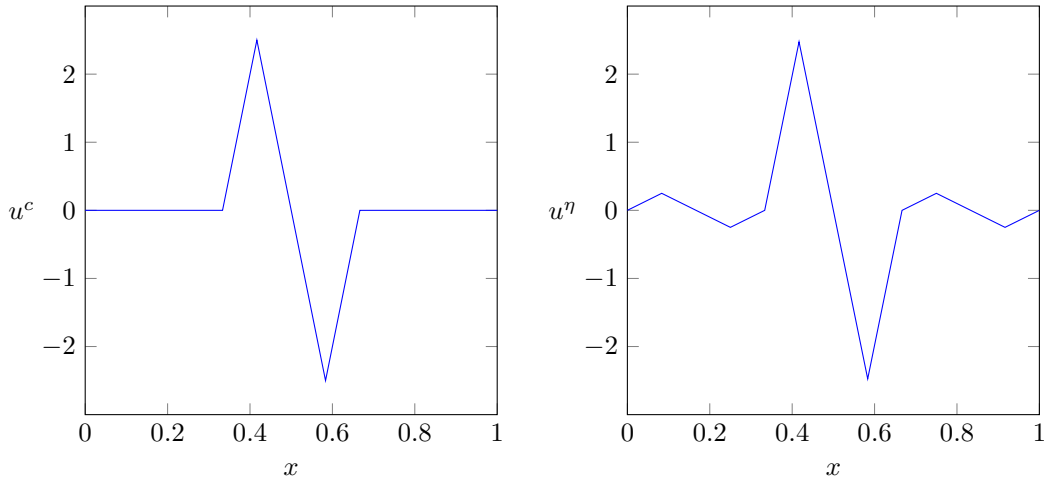
\begin{figure}
\centering
\begin{tabular}{c c} 
\begin{tikzpicture}[
declare function={
v(\x)= and(\x>0, \x<=1/4) * (\x)   +
and(\x>1/4, \x<=3/4) * (-\x+1/2)     +
and(\x>3/4, \x<=1) * (\x-1);
uc(\x) = and(\x>1/3, \x<=2/3) * (10*v(3*\x-1));  
}
]
\begin{axis}[
width = 7cm,
height = 7cm,
xlabel = {$x$},
ylabel = {$u^{c}$},
ylabel near ticks,
ylabel style={at={(axis description cs:-0.1,.5)},rotate=-90},
ytick = {-2,-1,0,1,2},
xmin = 0,
xmax = 1,
ymin = -3,
ymax = 3
]
\addplot[blue, domain=0:1]{uc(x)};
\end{axis}
\end{tikzpicture}
&
\begin{tikzpicture}[
declare function={
v(\x)= and(\x>0, \x<=1/4) * (\x)   +
and(\x>1/4, \x<=3/4) * (-\x+1/2)     +
and(\x>3/4, \x<=1) * (\x-1);
uet(\x) = (\x <= 1/3) * (v(3*\x)) +
and(\x>1/3, \x <=2/3) * (9.9*v(3*\x-1)) +
(\x>2/3) * v(3*\x-2);  
}
]
\begin{axis}[
width = 7cm,
height = 7cm,
xlabel = {$x$},
ylabel = {$u^{\eta}$},
ylabel near ticks,
ylabel style={at={(axis description cs:-0.1,.5)},rotate=-90},
xmin = 0,
xmax = 1,
ymin = -3,
ymax = 3,
ytick = {-2,-1,0,1,2}
]
\addplot[blue, domain=0:1]{uet(x)};
\end{axis}
\end{tikzpicture}
\end{tabular}
\caption{The graphs of the functions $u^{c}$ and $u^{\eta}$ from Example~\ref{ex:exponent}~a) with $\varepsilon =0.1$.}
\label{fig:exponent}
\end{figure}
We observe that  $u^c$ is constant near the boundaries and only slightly steeper than $u^{\eta}$ in the middle of the domain. Numerical calculations show that for small $\varepsilon$, such as~$\varepsilon=0.1$, the estimate $\Rcal_2(u^c)<\Rcal_2(u^{\eta})$, and hence (H$4_p$) with $q=2$, holds; moreover, (H$5_p$) holds since the clean image has a higher Lipschitz constant than the noisy image in the sense that
\[
\mathrm{Lip}(2u^{\eta} -u^c) = 30-6\varepsilon < 30-3\varepsilon = \mathrm{Lip}(u^{\eta}).
\]
Therefore, we find that for $\alpha>0$ small enough, the optimal parameter lies inside $\Lambda=[1,\infty)$.
\smallskip

c) If we work with the same regularizers as in b), there are reasonable images for which the Lipschitz regularizer~in~\eqref{Rcalinfty} performs better than the other regularizers~in \eqref{Rcalp}. Let us consider with $\alpha >0$ chosen as in b), the images 
\begin{align*}
u^{c}(x) = x-1/2\quad \text{ and}\quad  u^{\eta} = (1+6\alpha)u^{c}. 
\end{align*}
Since $u^{\eta}$ is affine, we can show that the reconstruction with the Lipschitz regularizer is also an affine function. Indeed, for every other function, one can find an affine function with at most the same Lipschitz constant without increasing the distance to $u^{\eta}$ anywhere. This, in combination with the fact that the images are odd functions with respect to $x=1/2$, shows that $w^{(\infty)}$ is of the form $w^{(\infty)}(x) = \gamma (x-1/2)=\gamma u^c$ with $\gamma \geq 0$. Due to the optimality of $w^{(\infty)}$, the constant $\gamma$ has to minimize the quantity
\[
\norm{\gamma u^c- u^\eta}_{L^2((0,1))}^2+\alpha{\rm Lip}(\gamma u^c)=\frac{1}{12}(\gamma - (1+6\alpha))^2 + \alpha \gamma,
\]
which yields $\gamma = 1$. \color{black} Hence, $w^{(\infty)}$ coincides with the clean image and therefore $\overline{\Ical}(\infty)=0$, which implies that $p = \infty$ is the optimal parameter in this case.
\end{example}
\section{Varying the amount of nonlocality}\label{sec:amountnonlocality}

Next, we study
two classes
of nonlocal regularizers,  $\Rcal_{\delta}$ with 
$\delta \in \Lambda :=(0,\infty)$, considered
by Brezis \& Nguyen \cite{BrNg18} and 
Aubert \& Kornprobst \cite{AuKo09}, respectively, in the context of
 image processing. In both cases, we aim at optimizing the parameter \(\delta\) that encodes the amount
of nonlocality in the problem. We mention further that  both families  of functionals recover the classical $TV$-reconstruction
model in   the limit $\delta
\to 0$, cf.~\cite{BrNg18,AuKo09}.

To set the stage for our analysis, consider training data $(u^c,u^{\eta}) \in L^2(\Omega;\R^N) \times L^2(\Omega;\R^N)$ and the reconstruction functionals $\Jcal_{\delta,j}:L^2(\Omega) \to [0,\infty]$ with $\delta \in \Lambda$ and $j \in \{1,2,\dots,N\}$  given by
\[
\Jcal_{\delta,j}(u) = \norm{u-u^{\eta}_j}_{L^2(\Omega)}^2 + \Rcal_{\delta}(u).
\]
After showing that the sets 
\begin{equation}
\label{eq:kdeltaj}
K_{\delta,j}=\argmin_{u \in L^2(\Omega)} \Jcal_{\delta,j}(u).
\end{equation}
are non-empty for each of the two choices of the regularizers $\Rcal_{\delta}$, the upper-level functional from \eqref{training} in Section~\ref{sec:general} becomes
\begin{align}\label{Ical_sec5}
\Ical:(0,\infty) \to [0,\infty), \quad \Ical(\delta) = \inf_{w \in K_\delta} \norm{w-u^c}_{L^2(\Omega;\R^N)}^2
\end{align}
with $K_\delta = K_{\delta,1}\times K_{\delta,2} \times \cdots \times K_{\delta,N}$. In order 
to find its extension $\overline{\Ical}$ defined on $\overline{\Lambda}=[0,\infty]$, we determine the Mosco-limits of the regularizers (cf.~\eqref{eq:extI} and Theorem~\ref{theo:relax2}). This is the content of Propositions~\ref{th:BNgammanonlocal} and \ref{th:gammanonlocal} below, which provide the main results of this section.

\subsection{Brezis \& Nguyen setting}
\label{sub:BN}

For every $\delta\in (0,\infty)$ and $u\in L^1(\Omega)$, we consider the regularizers 
\[
\Rcal_\delta(u):=\delta\int_\Omega\int_\Omega \frac{\varphi(|u(x)-u(y)|/\delta)}{|x-y|^{n+1}}\,\dd x\,\dd y,
\]
where, following~\cite{BrNg18}, the function $\varphi:[0, \infty)\to [0, \infty)$ is  assumed to satisfy the following hypotheses: 
\begin{itemize}
\item[(H$1_\delta$)] $\varphi$ is lower semicontinuous in \([0,\infty)\) and continuous in \([0,\infty)\) except at a finite number of points, where it admits left- and right-side limits;\smallskip
\item[(H$2_\delta$)] there exists a constant $a>0$ such that $\ffi(t) \leq \min\{at^2,a\}$ for all $t\in [0,\infty)$; \smallskip
\item[(H$3_\delta$)] $\varphi$ is non-decreasing; \smallskip 
\item[(H$4_\delta$)] it holds that $\displaystyle \gamma_n \int_0^\infty \ffi(t) t^{-2} \, \dd{t} =1$ with $\gamma_n:= \displaystyle \int_{\Sbb^{n-1}} |e\cdot \sigma|\, \dd{\sigma}$ for any $e\in \Sbb^{n-1}$.
\end{itemize}

Note that the assumptions on $\varphi$ imply that the functional $\Rcal_{\delta}$ is never convex. 

\begin{example}\label{egAcal}
Examples of functions $\varphi$ with the properties (H$1_\delta$)--(H$4_\delta$) include suitable normalizations of
\begin{align*}
t\mapsto
\begin{cases}
0 &\text{if } t\leq 1\\
1&\text{if } t> 1
\end{cases}, \qquad 
t\mapsto\begin{cases}
t^2 &\text{if } t\leq 1\\
1&\text{if } t> 1
\end{cases}, \qquad t\mapsto 1-e^{-t^2}
\end{align*}
for $t\geq 0$,
cf.~\cite{BrNg18}.
\end{example}
To guarantee that the functionals $\Rcal_{\delta}$  satisfy a suitable compactness property, see Theorem~\ref{thm:BN-thm1}\,b),  
we must additionally assume that \smallskip
\begin{itemize}\label{eq:positive}
\item[(H$5_\delta$)] $\varphi(t) > 0 \  \text{for all $t>0$}$.
\end{itemize}
\smallskip

 Clearly, the last two functions from Example~\ref{egAcal} satisfy the positivity condition, while the first one does not.
In identifying the Mosco-limits $\overline{\Rcal}_{\delta}$ in each of the three cases \(\delta\in(0,\infty)\), \(\delta=0\), and \(\delta=\infty\), we make repeated use of ~\cite[Theorems~1, 2
and 3]{BrNg18}, which we recall here for the reader's convenience.

\begin{theorem}[cf.~{\cite[Theorems~1--3]{BrNg18}}]\label{thm:BN-thm1}
Let \(\Omega\subset \RR^n\) be a bounded and smooth domain, and let $\varphi$ satisfy {\rm (H$1_\delta$)--(H$4_\delta$)}.

\noindent a) If $(\delta_k)_k\subset (0,\infty)$ is such that \(\delta_k\to0\), then the following statements hold:

\begin{itemize}
\item[$(i)$]

There exists a constant \(K(\ffi) \in (0,1]\), independent of \(\Omega\), such that \((\Rcal_{{\delta_k}})_{k}\) \(\Gamma\)-converges as $k \to \infty$, with respect to
the \(L^1(\Omega)\)-topology, to \(\Rcal_0:L^{1}(\Omega) \to [0,\infty]\) defined for \(u\in L^1(\Omega)\) by
\begin{equation*}
\begin{aligned}
\Rcal_0(u):=\begin{cases}
K(\varphi) |Du|(\Omega) &\text{if } u\in BV(\Omega),\\
\infty &\text{if } u\in L^1(\Omega)\setminus BV(\Omega).
\end{cases}
\end{aligned}
\end{equation*}

\item[$(ii)$] 
If \((u_k)_k\) is a bounded sequence in \(L^1(\Omega)\) with \(\sup_k
\Rcal_{\delta_k}(u_k) <\infty\), then there exist a subsequence \((u_{k_l})_l\)
of \((u_k)_k\)  and a function \(u\in L^1(\Omega)\) such that \(\lim_{l\to \infty} \Vert
u_{k_l} - u\Vert_{L^1(\Omega)}=0\).
\end{itemize}

\noindent b) Suppose that {\rm (H$5_\delta$)} holds in addition to the above conditions, and let \((u_k)_k\) be a bounded sequence in \(L^1(\Omega)\) with \(\sup_k
\Rcal_{\delta}(u_k) <\infty\) for some $\delta>0$. Then, there exists a subsequence \((u_{k_l})_l\)
of \((u_k)_k\)  and a function \(u\in L^1(\Omega)\) such that \(\lim_{l\to \infty} \Vert
u_{k_l} - u\Vert_{L^1(\Omega)}=0\).
\end{theorem}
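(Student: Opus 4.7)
The plan is to prove the three claims separately, sharing technical tools between them. For the compactness statements in (a)(ii) and (b), I would apply a nonlocal Fréchet--Kolmogorov criterion, in the spirit of Ponce's nonlocal compactness theorem, whereby $L^1$-precompactness reduces to a uniform bound on the modulus of continuity $\tau\mapsto \|u_k(\cdot+\tau)-u_k\|_{L^1(\Omega_\tau)}$ as $\tau\to 0$. The monotonicity (H$3_\delta$) together with a layer-cake decomposition of $\varphi$ allow one to bound this modulus by $\Rcal_{\delta_k}(u_k)$ times a $|\tau|$-dependent weight. In part (a)(ii), the scaling $\delta_k\to 0$ makes the continuity modulus vanish with $\tau$ uniformly in $k$; in part (b), the strict positivity (H$5_\delta$) plays the same role by providing, for each fixed $\delta>0$ and threshold $\eta>0$, a lower bound $\varphi(t)\geq c_\eta>0$ on $\{t\geq\eta\}$ that turns $L^1$-oscillations into a controlled fraction of $\Rcal_\delta(u_k)$.

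For the $\Gamma$-convergence in (a)(i), I would address the $\Gamma$-limsup first. Along a constant sequence $u\in C^2(\bar\Omega)$, the change of variables $y=x+r\sigma$ with $\sigma\in\mathbb{S}^{n-1}$, followed by the rescaling $s=r|\nabla u(x)\cdot\sigma|/\delta_k$ and a Taylor expansion of $u(y)-u(x)$, yields formally
\[
\lim_{k\to\infty}\Rcal_{\delta_k}(u)=\Big(\int_0^\infty\varphi(s)\,s^{-2}\,\dd s\Big)\int_\Omega\int_{\mathbb{S}^{n-1}}|\nabla u(x)\cdot\sigma|\,\dd\sigma\,\dd x=\int_\Omega|\nabla u(x)|\,\dd x,
\]
after using (H$4_\delta$) and the identity $\int_{\mathbb{S}^{n-1}}|e\cdot\sigma|\,\dd\sigma=\gamma_n$ (the factors $\gamma_n$ and $\gamma_n^{-1}$ cancel). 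This produces a limsup bound with constant $1$; by density of $C^\infty(\bar\Omega)$ in $BV(\Omega)$ in the strict topology, one extends this bound to every $u\in BV(\Omega)$, obtaining $K(\varphi)\leq 1$. The sharp constant $K(\varphi)$ then arises from better, staircase-type recovery sequences for a one-dimensional jump, and is characterized by an explicit variational formula as the infimum of a 1D energy over admissible transition profiles.

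The main obstacle will be the $\Gamma$-liminf inequality: showing that $u_k\to u$ in $L^1(\Omega)$ with $\liminf_k \Rcal_{\delta_k}(u_k)<\infty$ forces $u\in BV(\Omega)$ together with $K(\varphi)|Du|(\Omega)\leq\liminf_k\Rcal_{\delta_k}(u_k)$. My plan is a directional slicing argument: for each $\sigma\in\mathbb{S}^{n-1}$, decompose the double integral via Fubini along the lines $\{x_0+t\sigma:t\in\mathbb{R}\}$ to obtain, on almost every slice, a one-dimensional nonlocal functional whose $\Gamma$-liminf is $K(\varphi)$ times the 1D total variation. Integrating the resulting 1D bound over $\sigma$ and applying the Cauchy-type formula $|Du|(\Omega)=\tfrac{1}{\gamma_n}\int_{\mathbb{S}^{n-1}}|D_\sigma u|(\Omega)\,\dd\sigma$ for $u\in BV(\Omega)$ then gives the required inequality. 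The delicate points are making the slicing rigorous in the presence of the singular kernel $|x-y|^{-(n+1)}$, passing from slicewise $L^1$-convergence to slicewise lower semicontinuity via Fatou, and approximating $\varphi$ by piecewise-constant profiles to isolate the sharp one-dimensional constant $K(\varphi)$---these steps constitute the technical heart of the argument.
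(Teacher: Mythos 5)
You should first be aware that the paper does not prove this statement at all: Theorem~\ref{thm:BN-thm1} is imported verbatim from Brezis \& Nguyen (cf.~[BrNg18, Theorems~1--3]) and is explicitly ``recalled for the reader's convenience'' as a black box, to be used later in Proposition~\ref{th:BNgammanonlocal}. So the relevant comparison is not with an argument in this paper but with the Brezis--Nguyen proofs themselves, and against that benchmark your proposal is a plausible roadmap rather than a proof. Its sound ingredients are the exact directional decomposition of the kernel $|x-y|^{-(n+1)}$ into one-dimensional functionals with kernel $|s-t|^{-2}$ (the change of variables $y=x+r\sigma$ does give $r^{-2}\,\dd r\,\dd\sigma$, and the integral-geometric identity $\int_{\Sbb^{n-1}}|e\cdot\sigma|\,\dd\sigma=\gamma_n$ matches the normalization (H$4_\delta$)), the extension of a limsup bound from smooth functions to $BV$ by strict density, and the observation that (H$5_\delta$) is what rescues compactness at fixed $\delta$.

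The genuine gaps are exactly the parts you defer as ``the technical heart.'' First, the sharp one-dimensional liminf with the constant $K(\ffi)$ is the main content of the Brezis--Nguyen theorems (building on a series of earlier papers of Nguyen); it does not follow from slicing plus Fatou, which only reduces the dimension, nor from approximating $\ffi$ by step functions, since even the model case $\ffi=c\,\mathbbm{1}_{t>1}$ is the hard case. Second, because $K(\ffi)$ can be strictly smaller than $1$, the $\Gamma$-limsup cannot be settled by the pointwise limit along constant sequences: you must construct oscillatory recovery sequences that do strictly better than the pointwise limit \emph{and} prove that the constant they achieve coincides with the constant in the liminf bound; your ``staircase profiles with an explicit variational formula'' names this task but supplies neither the construction nor the matching argument (in fact no explicit formula for $K(\ffi)$ is known in general). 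Third, the compactness claims are not a routine Fr\'echet--Kolmogorov/layer-cake estimate: since $\ffi$ may vanish identically on a neighborhood of $0$ and is bounded above, $\Rcal_{\delta_k}(u_k)$ gives no direct control of $|u_k(x)-u_k(y)|$ below the threshold $\delta_k$ nor of its size above it, so the translation modulus cannot simply be bounded by ``$\Rcal_{\delta_k}(u_k)$ times a $|\tau|$-dependent weight''; the actual proofs of [BrNg18, Theorems~2 and~3] require genuinely finer averaging/covering arguments. As it stands, your text would prove the theorem only modulo re-proving precisely the results it is meant to establish, which is why the paper cites them instead.
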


We point out that if $\varphi$ fulfills (H$1_\delta$)--(H$5_\delta$), then (H) in Section~\ref{sec:general} holds and the sets $K_{\delta,j}$ defined in \eqref{eq:kdeltaj} are non-empty (cf.~\cite[Corollary~7]{BrNg18}). 
We are now in a position to characterize the asymptotic behavior of the regularizers $\Rcal_{\delta'}$ as $\delta'\to \delta\in \overline{\Lambda}=[0,\infty]$.

\begin{proposition}[Mosco-convergence of regularizers]
\label{th:BNgammanonlocal}
Let $\Lambda=(0,\infty)$ and \(\Omega\subset \RR^n\) be a bounded and smooth domain. Under the assumptions 
 {\rm (H$1_\delta$)--(H$5_\delta$)} on $\varphi: [0, \infty)\to [0, \infty)$, it holds that
\begin{equation}\label{eq:BNgammanonlocal}
\overline{\Rcal}_{\delta}:=\text{\rm Mosc}(L^2)\text{-}\lim_{\delta'\to \delta}\Rcal_{\delta'}=\begin{cases}
\Rcal_\delta &\text{if} \ \delta \in (0,\infty),\\
\Rcal_{0} &\text{if} \ \delta=0,\\
0 &\text{if $\delta=\infty$},
\end{cases} \quad \text{for $\delta \in \overline{\Lambda}=[0,\infty]$.}
\end{equation}
\end{proposition}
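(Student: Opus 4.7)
The plan is to verify the Mosco-convergence claim case by case, for any sequence $(\delta_k)_k\subset(0,\infty)$ with $\delta_k\to\delta$ in $\overline{\Lambda}=[0,\infty]$, using Theorem~\ref{thm:BN-thm1} of Brezis--Nguyen as the essential input. A useful elementary consequence of (H$2_\delta$), exploited repeatedly, is the uniform bound
\begin{equation*}
\delta'\varphi(t/\delta')\leq a\min\bigl\{t^2/\delta',\;\delta'\bigr\}\qquad\text{for all }t\geq 0,\;\delta'>0.
\end{equation*}

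For the interior case $\delta\in(0,\infty)$, the liminf inequality is obtained as follows: given $u_k\rightharpoonup u$ in $L^2(\Omega)$ with $\sup_k\Rcal_{\delta_k}(u_k)<\infty$, I would apply Theorem~\ref{thm:BN-thm1}\,b) (valid since $\delta_k\geq\delta/2$ for $k$ large) to extract a subsequence converging in $L^1(\Omega)$, whose limit must equal $u$ by the weak-$L^1$ embedding; passing to a further almost everywhere convergent subsequence and combining the lower semicontinuity of $\varphi$ from (H$1_\delta$) with Fatou's lemma then yields $\liminf_k\Rcal_{\delta_k}(u_k)\geq\Rcal_\delta(u)$, which extends to the full sequence by the subsequence principle. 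For the recovery sequence I would first reduce to Lipschitz test functions via a density argument in $\Dom\Rcal_\delta$: for such $v$ the Lipschitz bound $|v(x)-v(y)|\leq L|x-y|$ combined with the quadratic branch of the estimate above gives an integrable majorant of order $aL^2|x-y|^{1-n}/\delta$ on a neighbourhood $[\delta/2,2\delta]$ of $\delta$, and dominated convergence (after discarding the negligible set where $|v(x)-v(y)|/\delta$ hits one of the finitely many jump points of $\varphi$) then gives $\Rcal_{\delta_k}(v)\to\Rcal_\delta(v)$; a diagonal extraction produces the desired recovery sequence.

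For $\delta=0$ the goal is to upgrade the $\Gamma(L^1)$-convergence of Theorem~\ref{thm:BN-thm1}\,a)(i) to Mosco$(L^2)$-convergence. The liminf along $u_k\rightharpoonup u$ in $L^2(\Omega)$ follows by exactly the same compactness-to-strong-$L^1$ extraction as above, now using part a)(ii), combined with the $\Gamma(L^1)$-liminf from a)(i). The recovery direction is the principal technical obstacle, since the Brezis--Nguyen sequence is only guaranteed to converge in $L^1(\Omega)$ whereas Mosco requires strong $L^2$-convergence. I would circumvent this by approximating $u\in BV(\Omega)\cap L^2(\Omega)$ by smooth $v_l\in C^\infty(\overline{\Omega})$ with $v_l\to u$ in $L^2(\Omega)$ \emph{and} $|Dv_l|(\Omega)\to|Du|(\Omega)$ via an Anzellotti--Giaquinta type construction adapted to preserve $L^2$-integrability, and then invoking the direct smooth-function computation that $\Rcal_{\delta'}(v_l)\to K(\varphi)\int_\Omega|\nabla v_l|\,\dx$ as $\delta'\to 0$ (carried out in \cite{BrNg18}). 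A diagonal extraction then yields a strongly $L^2$-convergent recovery sequence with $\limsup\Rcal_{\delta_k}(u_k)\leq K(\varphi)|Du|(\Omega)=\Rcal_0(u)$; when $u\notin BV(\Omega)$ the target is $+\infty$ and the constant sequence suffices.

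For $\delta=\infty$ the liminf is immediate from $\Rcal_{\delta_k}\geq 0=\overline{\Rcal}_\infty$. For the recovery, I first note that every Lipschitz $v\in C^\infty(\overline{\Omega})$ satisfies, by the quadratic branch of the basic estimate combined with $|v(x)-v(y)|\leq\|\nabla v\|_\infty|x-y|$,
\begin{equation*}
\Rcal_{\delta'}(v)\leq \frac{a\|\nabla v\|_\infty^2}{\delta'}\int_\Omega\int_\Omega|x-y|^{1-n}\,\dx\,\dy\longrightarrow 0\qquad\text{as }\delta'\to\infty,
\end{equation*}
and then use density of such $v$ in $L^2(\Omega)$ together with a diagonal argument to produce a strongly $L^2$-convergent sequence $u_k\to u$ with $\Rcal_{\delta_k}(u_k)\to 0$. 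Overall, the main anticipated hurdle is the $\delta=0$ recovery, where the approximation of $u\in BV(\Omega)\cap L^2(\Omega)$ must be strict in the $BV$-norm and simultaneously strong in $L^2(\Omega)$.
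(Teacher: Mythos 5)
Your liminf arguments in all three regimes and your recovery at $\delta=\infty$ are essentially the paper's (for $\delta\in(0,\infty)$ you should spell out that the monotonicity (H$3_\delta$) lets you pass from $\sup_k\Rcal_{\delta_k}(u_k)<\infty$ to $\sup_k\Rcal_{\bar\delta}(u_k)<\infty$ for one \emph{fixed} $\bar\delta$ before invoking Theorem~\ref{thm:BN-thm1}\,b), but that is a detail). The recovery sequence at $\delta=0$, however, is built on a false premise. For a fixed smooth (or Lipschitz) function $v$, the pointwise limit of $\Rcal_{\delta'}(v)$ as $\delta'\to0$ is $\int_\Omega|\nabla v|\,\dx$ with constant $1$ --- that is exactly what the normalization (H$4_\delta$) encodes --- whereas the $\Gamma$-limit constant $K(\varphi)\in(0,1]$ can be strictly smaller than $1$; this discrepancy is one of the main points of Brezis--Nguyen, and there is no ``direct smooth-function computation'' in \cite{BrNg18} giving the constant $K(\varphi)$. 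Consequently your diagonal over strict smooth approximations only yields $\limsup_k\Rcal_{\delta_k}(u_k)\leq|Du|(\Omega)$, which misses the required bound $K(\varphi)|Du|(\Omega)=\Rcal_0(u)$ whenever $K(\varphi)<1$. (The strict-$BV$/$L^2$ approximation you single out as the main hurdle is in fact harmless, e.g.\ by mollification.) The correct route is the one the paper takes: start from the $L^1$-recovery sequence $(v_k)_k$ provided by the $\Gamma(L^1)$-convergence in Theorem~\ref{thm:BN-thm1}\,a)\,$(i)$ (which may oscillate and only converges in $L^1$) and upgrade it to strong $L^2$-convergence by truncation, $u_k:=T^{l_k}\circ v_k$ with $l_k\to\infty$ chosen so that $l_k\norm{v_k-u}_{L^1(\Omega)}\to0$; monotonicity (H$3_\delta$) gives $\Rcal_{\delta_k}(u_k)\leq\Rcal_{\delta_k}(v_k)$, and the elementary bound $\norm{T^{l_k}\circ v_k-T^{l_k}\circ u}_{L^2(\Omega)}^2\leq 2l_k\norm{v_k-u}_{L^1(\Omega)}$ together with $T^{l_k}\circ u\to u$ in $L^2(\Omega)$ gives $u_k\to u$ in $L^2(\Omega)$.

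The recovery at interior $\delta\in(0,\infty)$ also has genuine gaps. First, the reduction to Lipschitz test functions requires density \emph{in energy}, i.e.\ approximations $v_l\to u$ in $L^2(\Omega)$ with $\Rcal_\delta(v_l)\to\Rcal_\delta(u)$; since $\varphi$ is neither convex nor continuous, mollification does not obviously achieve this, and no such result is established or cited. Second, and decisively, the dominated-convergence step fails as stated: the set where $|v(x)-v(y)|/\delta$ equals a jump point $t_0$ of $\varphi$ need not be negligible. Take a Lipschitz $v$ with two plateaus whose values differ by exactly $\delta t_0$; the product of the plateaus has positive measure in $\Omega\times\Omega$, and if $\delta_k\nearrow\delta$ the integrand converges there to the right-hand limit of $\varphi$ at $t_0$, which may exceed $\varphi(t_0)$, so $\delta'\mapsto\Rcal_{\delta'}(v)$ is in general not continuous at $\delta$. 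Both problems vanish if, instead of freezing the function and moving $\delta$, you rescale the function as in the paper: $u_k:=(\delta_k/\delta)u$ converges to $u$ strongly in $L^2(\Omega)$ and satisfies the exact identity $\Rcal_{\delta_k}(u_k)=(\delta_k/\delta)\Rcal_\delta(u)\to\Rcal_\delta(u)$, so it is a recovery sequence for every $u\in L^2(\Omega)$ with no approximation and no continuity of $\varphi$ needed.
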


\begin{proof}
Considering a sequence $(\delta_k)_k\subset (0,\infty)$ with limit $\delta \in [0,\infty]$, one needs to verify that the Mosco-limit of $(\Rcal_{{\delta}_k})_k$ exist and is given by the right-hand side of \eqref{eq:BNgammanonlocal}. We split the proof into three cases. \smallskip

\textit{Case 1: $\delta =0$.} Let \((u_k)_k\subset
L^2(\Omega)\) and \(u\in
L^2(\Omega)\) be such that \(u_k\weakly  u\) in \(L^2(\Omega)\).
We aim to show that
\begin{equation}\label{eq:liminf0}
\Rcal_0(u) 
\leq \liminf_{k\to\infty} \Rcal_{\delta_k}(u).
\end{equation}
One may thus  assume without loss of generality
that the limit inferior on the right-hand side of \eqref{eq:liminf0}
is finite, and, after extracting a subsequence if necessary, also
\begin{equation*}
\begin{aligned}
\sup_k \Rcal_{\delta_k}(u_k)
<\infty.
\end{aligned}
\end{equation*}
Hence, by Theorem~\ref{thm:BN-thm1}\,a)\,$(ii)$, it follows that
$u_k \to u \text{ in }  L^1(\Omega)$,
which together with Theorem~\ref{thm:BN-thm1}\,a)\,$(i)$ yields \eqref{eq:liminf0}.

To complement this lower bound, we need to obtain for each \(u\in L^2(\Omega)\cap BV(\Omega)\)
 a sequence \((u_k)_k\subset
L^2(\Omega)\) such that  \(u_k\to  u\) in \(L^2(\Omega)\)
and
\begin{equation}\label{eq:liminf02}
\begin{aligned}
\Rcal_0(u)
 \geq \limsup_{k\to\infty} \Rcal_{\delta_k}(u_k). 
\end{aligned}
\end{equation}  
The idea is to suitably truncate a recovery sequence of the $\Gamma$-limit $\Gamma(L^1)$-$\lim_{k\to \infty}\Rcal_{\delta_k}$ from Theorem~\ref{thm:BN-thm1}\,$(i)$. 
 For the details, fix \(l\in\NN\) and consider the truncation function, \(T^l:\RR\to\RR\),
\begin{align*}
T^l(t):=\begin{cases}
l & \hbox{if } t\geq l,\\
t & \hbox{if } -l\leq t\leq l,\\
-l & \hbox{if } t\leq -l.
\end{cases}
\end{align*} 
By Theorem~\ref{thm:BN-thm1}\,$(i)$, there exists a sequence \((v_k)_k\subset L^1(\Omega)\) such that \(v_k\to u\) in \(L^1(\Omega)\) and 
\begin{equation}\label{eq:recseq}
\begin{aligned}
\lim_{k\to\infty} \Rcal_{\delta_k}(v_k)
= K(\ffi)|Du|(\Omega) = \Rcal_0(u).
\end{aligned}
\end{equation}
Choosing a sequence $(l_k)_k \subset \R$ such that $l_k \to \infty$ and $l_k\norm{v_k-u}_{L^1(\Omega)} \to 0$ as $k \to \infty$, we define 
\[
u_k:=T^{l_k} \circ v_k  \in L^\infty(\Omega) \quad \text{for all $k\in \N$}.
\]
Then, an application of H\"{o}lder's inequality shows that
\begin{align*}
\norm{u_k -u}_{L^2(\Omega)} &\leq \norm{u_k-T^{l_k}\circ u}_{L^2(\Omega)}+\norm{T^{l_k} \circ u-u}_{L^2(\Omega)}\\ &\leq \left(2l_k\norm{v_k-u}_{L^1(\Omega)}\right)^{1/2} + \norm{T^{l_k} \circ u-u}_{L^2(\Omega)}  \to 0,
\end{align*}
as $k \to \infty$. Therefore, $u_k \to u$ in $L^2(\Omega)$ and, in view of the monotonicity of $\varphi$ in (H$3_\delta$), we conclude that
\begin{align*}
\limsup_{k\to \infty} \Rcal_{\delta_k}(u_k) 
&=  \limsup_{k\to\infty}
\delta_k\int_{\Omega}\int_{\Omega}
\frac{\varphi(\delta_k^{-1}|T^{l_k} (v_k (x))-T^{l_k} (v_k (y))|)}{|x-y|^{n+1}} \,\dx\,\dy\\
&\leq \lim_{k\to\infty} \delta_k\int_{\Omega}\int_{\Omega}
\frac{\varphi(\delta_k^{-1}| v_k(x)-v_k (y)|)}{|x-y|^{n+1}}
\,\dx\,\dy =\lim_{k\to\infty} \Rcal_{\delta_k}(v_k),
\end{align*}
 \color{black}
which implies \eqref{eq:liminf02}  by \eqref{eq:recseq}. \smallskip

\textit{Case 2: $\delta \in (0,\infty)$.} Consider a sequence \((u_k)_k\subset L^2(\Omega)\) and \(u\in
L^2(\Omega)\) such that \(u_k\weakly  u\) in \(L^2(\Omega)\) and
\begin{equation*}
\begin{aligned}
\sup_k \Rcal_{\delta_k}(u_k)
<\infty.
\end{aligned}
\end{equation*}
 
We start by observing that there
exist \(\bar \delta > 0\) and \(K\in
\NN\) such that for all   \(k\geq K\), we have
$\bar{\delta}/2 \leq \delta_k \leq \bar{\delta}$. Hence, the previous estimate
and~(H$3_\delta$) yield
\begin{equation*}
\begin{aligned}
\sup_{k\geq K} \Rcal_{\bar\delta}(u_k)=
\sup_{k \geq K} \bigg(  \bar \delta\int_{\Omega}\int_{\Omega}
\frac{\varphi({\bar \delta}^{-1}|u_k(x)-u_k(y)|)}{|x-y|^{n+1}}
\,\dx\,\dy\bigg) \leq 2\sup_{k}  \Rcal_{\delta_k}(u_k) <\infty.
\end{aligned}
\end{equation*}
Consequently, in view of Theorem~\ref{thm:BN-thm1}\, b), we may further
assume that 
\begin{equation}\label{eq:liminf+1}
\begin{aligned}
u_k \to u \text{ in }  L^1(\Omega) \qquad \text{ and } \qquad u_k (x) \to u(x)
\text{ for a.e.~$x\in\Omega$}.
\end{aligned}
\end{equation}
Using Fatou's
lemma first, and then \eqref{eq:liminf+1} together with the lower semicontinuity
of \(\ffi\) on \([0,\infty)\), we get
\begin{align*}
\liminf_{k\to \infty}\Rcal_{\delta_k} (u_k) &= \liminf_{k\to\infty} \delta_k\int_{\Omega}\int_{\Omega}
\frac{\varphi(\delta_k^{-1}|u_k(x)-u_k(y)|)}{|x-y|^{n+1}} \,\dx\,\dy\\
 &\geq \,\delta \int_{\Omega}\int_{\Omega} \liminf_{k\to\infty}
\frac{\varphi(\delta_k^{-1}|u_k(x)-u_k(y)|)}{|x-y|^{n+1}} \,\dx\,\dy\\
 &\geq \,\delta \int_{\Omega}\int_{\Omega}
\frac{\varphi(\delta^{-1}|u(x)-u(y)|)}{|x-y|^{n+1}} \,\dx\,\dy= \Rcal_\delta(u),
\end{align*}
which proves the liminf inequality.

\color{black}
For the recovery sequence, fix $u \in L^2(\Omega)$ and take $u_k = \frac{\delta_k}{\delta}u$ for $k \in \N$. Then, $u_k \to u$ in $L^2(\Omega)$ as $k \to \infty$ and
\[
\lim_{k \to \infty} \Rcal_{\delta_k}(u_k) = \lim_{k \to \infty} \frac{\delta_k}{\delta}\Rcal_{\delta}(u) = \Rcal_{\delta}(u),
\]
as desired. \smallskip

\textit{Case 3: $\delta = \infty$.} The lower bound follows immediately by the non-negativity of $\Rcal_{\delta_k}$ for $k \in \N$.  As a recovery sequence for $u \in L^2(\Omega)$, take a sequence $(u_k)_k \subset L^2(\Omega)$ such that $u_k \to u$ in $L^2(\Omega)$ and $\mathrm{Lip}(u_k) \leq \delta_k^{1/4}$, which is possible since $\delta_k \to \infty$ as $k \to \infty$. Then, using (H$2_\delta$), 
\begin{align*}
\Rcal_{\delta_k}(u_k)&= \delta_k\int_{\Omega}\int_{\Omega}
\frac{\varphi(\delta_k^{-1}|u_k(x)-u_k(y)|)}{|x-y|^{n+1}} \,\dx\,\dy \\
&\leq a\frac{\mathrm{Lip}(u_k)^2}{\delta_k}\int_{\Omega}\int_{\Omega} \frac{1}{\abs{x-y}^{n-1}}\,\dx\,\dy \leq  a \delta_k^{-1/2} \int_{\Omega}\int_{\Omega} \frac{1}{\abs{x-y}^{n-1}}\,\dx\,\dy.
\end{align*}
Hence, $\Rcal_{\delta_k}(u_k) \to 0$ as $k \to \infty$, which concludes the proof.
\end{proof}

\subsection{Aubert \& Kornprobst setting}
\label{sub:ak}
Let $\Omega \subset \R^n$ be a bounded Lipschitz domain. We fix a nonnegative function $\rho:[0,\infty) \to [0,\infty)$  satisfying\vspace{0.2cm}
\begin{itemize}
\item[(H$6_\delta$)] $\rho$ is non-increasing and $\displaystyle \int_{\R^n}\rho(\abs{x})\,\dd x=1$,
\end{itemize}
\vspace{0.2cm}
and consider the regularizers given for $\delta \in \Lambda=(0,\infty)$ and $u \in L^2(\Omega)$  by
\begin{align}\label{Rcal_delta}
\Rcal_{\delta}(u) = \frac{1}{\delta^n}\int_{\Omega} \int_{\Omega} \frac{\abs{u(x)-u(y)}}{\abs{x-y}}\rho\left(\frac{\abs{x-y}}{\delta}\right)\dd x\, \dd y.
\end{align}

\color{black}
\begin{remark}\label{rem:nonlocal}
a) As $\rho$ is non-increasing, we have for all $0<\delta<\bar{\delta}$ and $x,y \in \Omega$ that
$\rho(\abs{x-y}/\delta) \leq \rho(\abs{x-y}/\bar{\delta})$;  consequently,
\[
\Rcal_{\delta}(u) \leq \frac{\bar{\delta}^n}{ \delta^n}\Rcal_{\bar{\delta}}(u)
\]
for all $u \in L^2(\Omega)$. \smallskip

b) Note that the assumption \eqref{eq:assumptions} from Section~\ref{sec:general} is satisfied here; in particular, $\Rcal_{\delta}$ is $L^2$-weakly lower semicontinuous. Indeed, as the dependence of the integrand on $u$ is convex, it is enough to prove strong lower semicontinuity in $L^2(\Omega)$. This is in turn a simple consequence of Fatou's lemma. \smallskip

c) In this set-up, the sets $K_{\delta,j}$  in \eqref{eq:kdeltaj} consist of a single element $w^{(\delta)}_j \in L^2(\Omega)$ in light of the strict convexity of the fidelity term and convexity of $\Rcal_{\delta}$. The upper-level functional from \eqref{Ical_sec5} then becomes
\[
\Ical:(0,\infty) \to [0,\infty), \quad \Ical(\delta) = \norm{w^{(\delta)}-u^c}_{L^2(\Omega;\R^N)}^2.
\]
\end{remark}

The nonlocal functionals in~\eqref{Rcal_delta} have been applied to problems in imaging in~\cite{AuKo09},  providing a derivative-free alternative to popular local models. The localization behavior of these functionals as $\delta \to 0$ is well-studied, originally by Bourgain, Brezis, \& Mironescu \cite{BBM01} and later extended to the $BV$-case in \cite{Dav02, Pon04}. Using these results, we show that, as $\delta \to 0$, the reconstruction functional in our bi-level scheme turns into the $TV$-reconstruction functional, see Proposition~\ref{th:gammanonlocal} below. Moreover, in order to get structural stability inside the domain $\Lambda$, we exploit the monotonicity properties of the functional $\Rcal_{\delta}$, cf.~Remark~\ref{rem:nonlocal}\,a). Lastly, as $\delta \to \infty$, we observe that the regularization term vanishes.

\color{black}
\begin{proposition}[Mosco-convergence of the regularizers]
\label{th:gammanonlocal}
Let $\Lambda =(0,\infty)$, $\Omega \subset \R^n$ be a bounded Lipschitz domain and assume that {\rm (H$6_\delta$)} holds. Then, 
\begin{equation}\label{eq:BNgammanonlocal}
\overline{\Rcal}_{\delta}:=\text{\rm Mosc}(L^2)\text{-}\lim_{\delta'\to \delta}\Rcal_{\delta'}=\begin{cases}
\Rcal_\delta &\text{if} \ \delta \in (0,\infty),\\
\Rcal_{0} &\text{if} \ \delta=0,\\
0 &\text{if $\delta=\infty$},
\end{cases} \quad \text{for $\delta \in \overline{\Lambda}=[0,\infty]$,}
\end{equation}
where 
\begin{equation}\label{Rcal0AK}
\Rcal_0:L^2(\Omega) \to [0,\infty], \quad \Rcal_0(u)=\begin{cases}
\kappa_n \abs{Du}(\Omega), \qquad&\text{if $u \in BV(\Omega)$},\\
\infty \qquad&\text{if $u \in L^2(\Omega) \setminus BV(\Omega),$}
\end{cases}
\end{equation}
with $\kappa_n =\displaystyle \dashint_{\Sbb^{n-1}}| e\cdot \sigma|\, \dd\sigma$ for any $e\in \Sbb^{n-1}$.
\end{proposition}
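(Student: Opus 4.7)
The plan is to establish the Mosco-convergence case by case for sequences $(\delta_k)_k\subset(0,\infty)$ with $\delta_k\to\delta\in[0,\infty]$, mirroring the structure of Proposition~\ref{th:BNgammanonlocal}. The $\delta=0$ case relies on the $L^1$-$\Gamma$-convergence result of Ponce~\cite{Pon04} (the $BV$ extension of Bourgain--Brezis--Mironescu), the case $\delta\in(0,\infty)$ exploits the monotonicity of $\rho$ in~(H$6_\delta$), and the case $\delta=\infty$ uses the normalization $\int_{\R^n}\rho(|x|)\,\dd x=1$.

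For $\delta\in(0,\infty)$, the liminf inequality goes as follows. Given $u_k\weakly u$ in $L^2(\Omega)$ and any $\delta'\in(0,\delta)$, eventually $\delta_k\geq\delta'$, and since $\rho$ is non-increasing, $\rho(|x-y|/\delta_k)\geq\rho(|x-y|/\delta')$; thus
\begin{equation*}
\Rcal_{\delta_k}(u_k) \geq (\delta'/\delta_k)^n\,\Rcal_{\delta'}(u_k).
\end{equation*}
Taking $\liminf$ and using the weak $L^2$-lower semicontinuity of $\Rcal_{\delta'}$ (Remark~\ref{rem:nonlocal}\,b)) gives $\liminf_k\Rcal_{\delta_k}(u_k)\geq(\delta'/\delta)^n\Rcal_{\delta'}(u)$; letting $\delta'\nearrow\delta$, monotone convergence (noting that $\rho$ has at most countably many discontinuities) yields the bound $\Rcal_\delta(u)$. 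For the recovery sequence one takes $u_k=u$: when $u$ is Lipschitz on $\bar\Omega$, a change of variables $z=(x-y)/\delta_k$ provides the uniform bound $\Rcal_{\delta_k}(u)\leq\mathrm{Lip}(u)|\Omega|$, so dominated convergence yields $\Rcal_{\delta_k}(u)\to\Rcal_\delta(u)$; the general case follows by approximating $u$ by mollifications and exploiting convexity and strong $L^2$-lower semicontinuity of $\Rcal_\delta$ in $u$.

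The case $\delta=0$ closely mirrors Case~1 in the proof of Proposition~\ref{th:BNgammanonlocal}. Ponce's theorem~\cite{Pon04} provides the $L^1$-$\Gamma$-convergence $\Gamma(L^1)$-$\lim_k \Rcal_{\delta_k}=\Rcal_0$ and an $L^1$-compactness: $L^1$-bounded sequences $(u_k)_k$ with $\sup_k\Rcal_{\delta_k}(u_k)<\infty$ are relatively compact in $L^1(\Omega)$ with limits in $BV(\Omega)$. Since any $L^2$-weakly convergent sequence on the bounded domain $\Omega$ is $L^1$-bounded, this compactness transfers the liminf inequality from $L^1$ to the $L^2$-weak topology. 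For the $L^2$-recovery sequence, starting from an $L^1$-recovery sequence $(v_k)_k$ for $u\in L^2(\Omega)\cap BV(\Omega)$, we truncate to $u_k:=T^{l_k}\circ v_k$ with $l_k\to\infty$ chosen so that $l_k\|v_k-u\|_{L^1(\Omega)}\to 0$; via the estimate $\|T^{l_k}\circ v_k -T^{l_k}\circ u\|_{L^2(\Omega)}^2\leq 2l_k\|v_k-u\|_{L^1(\Omega)}$, one ensures $u_k\to u$ in $L^2(\Omega)$, and since truncation is $1$-Lipschitz, $\Rcal_{\delta_k}(u_k)\leq\Rcal_{\delta_k}(v_k)$, so $\limsup_k\Rcal_{\delta_k}(u_k)\leq\Rcal_0(u)$.

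Finally, for $\delta=\infty$, the liminf is trivial from $\Rcal_{\delta_k}\geq 0$. For any Lipschitz $v\in L^2(\Omega)$, using $|v(x)-v(y)|/|x-y|\leq\mathrm{Lip}(v)$ and the change of variables $z=(x-y)/\delta_k$,
\begin{equation*}
\Rcal_{\delta_k}(v) \leq \mathrm{Lip}(v)\,|\Omega|\int_{B_{R/\delta_k}(0)}\rho(|z|)\,\dd z\to 0 \quad \text{as } k\to\infty,
\end{equation*}
with $R=\diam(\Omega)$, by absolute continuity of the Lebesgue integral. A diagonal extraction with Lipschitz approximations of $u$ in $L^2(\Omega)$ then yields the desired recovery sequence with $\Rcal_{\delta_k}(u_k)\to 0$. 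The main obstacle I anticipate lies in the case $\delta=0$, in carefully bridging Ponce's $L^1$-framework to the $L^2$-Mosco setting; the key enabling features are the $L^1$-compactness provided by~\cite{Pon04} (for the liminf) and the monotonicity of $\Rcal_{\delta_k}$ under $1$-Lipschitz post-compositions such as truncation (for the recovery sequence).
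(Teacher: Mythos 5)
Most of your plan coincides with the paper's proof: the liminf at $\delta=0$ via the $L^1$-compactness of energy-bounded sequences and Ponce's $\Gamma$-liminf, the liminf for $\delta\in(0,\infty)$ via the monotonicity $\Rcal_{\delta_k}\geq(\delta'/\delta_k)^n\Rcal_{\delta'}$ plus weak lower semicontinuity and monotone convergence, and the $\delta=\infty$ case are exactly the paper's arguments. For the recovery sequence at $\delta=0$ you take a small detour: the paper simply uses the constant sequence, since by~\cite[Corollary~1]{Pon04} the functionals converge pointwise to $\kappa_n|Du|(\Omega)$ on $L^2(\Omega)\cap BV(\Omega)$; your truncation of an $L^1$-recovery sequence (mirroring the Brezis--Nguyen case) is also correct, because truncation is $1$-Lipschitz and hence decreases $\Rcal_{\delta_k}$, but it is unnecessary here.

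The one genuine gap is in the limsup inequality for $\delta\in(0,\infty)$ with general $u\in L^2(\Omega)$, $\Rcal_\delta(u)<\infty$. You say ``the general case follows by approximating $u$ by mollifications and exploiting convexity and strong $L^2$-lower semicontinuity of $\Rcal_\delta$.'' Two problems: first, lower semicontinuity only gives $\liminf_l\Rcal_\delta(u_l)\geq\Rcal_\delta(u)$, which is the wrong direction; what you need is $\limsup_l\Rcal_\delta(u_l)\leq\Rcal_\delta(u)$, and that comes from Jensen's inequality and Fubini applied to the mollified function. Second, and more seriously, mollification requires values of $u$ outside $\Omega$, and the extension by zero does not work in general: (H$6_\delta$) does not make $z\mapsto\rho(|z|/\delta)/|z|$ integrable near the origin, so the jump of the zero extension across $\partial\Omega$ can carry infinite nonlocal energy, destroying the Jensen/Fubini bound over the enlarged domain. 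The paper resolves this by first extending $u$ by reflection across the Lipschitz boundary to some $\bar u\in L^2(\R^n)$ with
\[
\int_{\R^n}\int_{\R^n}\frac{|\bar u(x)-\bar u(y)|}{|x-y|}\,\rho\Bigl(\frac{|x-y|}{\delta}\Bigr)\dd x\,\dd y<\infty,
\]
as in the proof of \cite[Theorem~4]{BBM01} --- this is precisely where the Lipschitz regularity of $\Omega$ enters --- and only then mollifies, obtaining Lipschitz $u_l\to u$ in $L^2(\Omega)$ with $\Rcal_\delta(u_l)\to\Rcal_\delta(u)$, followed by the diagonal argument with the Lipschitz case (where, as in the paper, one can use $\|\rho_{\delta_k}-\rho_\delta\|_{L^1(\R^n)}\to0$, or your domination argument, to get $\Rcal_{\delta_k}(u_l)\to\Rcal_\delta(u_l)$). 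Without such an extension step your construction of the recovery sequence for general $u$ does not go through as stated.
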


\begin{proof}
Given $(\delta_k)_k\subset (0,\infty)$ with limit $\delta \in [0,\infty]$, the arguments below, subdivided into three different regimes, show that the Mosco-limit of $(\Rcal_{{\delta}_k})_k$ exists and is equal to the right-hand side of \eqref{eq:BNgammanonlocal}.  \smallskip

\textit{Case 1: $\delta=0$.} For the lower bound, take a sequence $u_k \weakly u$ in $L^2(\Omega)$ and assume without loss of generality that
\[
\sup_k \Rcal_{\delta_k}(u_k) <\infty.
\]
By \cite[Theorem~4]{BBM01}, $(u_k)_k$ is relatively compact in $L^1(\Omega)$, so that $u_k\to u$ in $L^1(\Omega)$. We now use the $\Gamma$-liminf result with respect to the $L^1(\Omega)$-convergence in~\cite[Corollary~8]{Pon04},
to deduce that
\[
\Rcal_0(u) \leq \liminf_{k \to \infty} \Rcal_{\delta_k}(u_k),
\]
as desired. For the recovery sequence, we may suppose that $u \in L^2(\Omega) \cap BV(\Omega)$. Then, it follows from~\cite[Corollary~1]{Pon04} that
\[
\lim_{k \to \infty} \frac{1}{\delta_k^n}\int_{\Omega} \int_{\Omega} \frac{\abs{u(x)-u(y)}}{\abs{x-y}}\rho\left(\frac{\abs{x-y}}{\delta_k}\right)\dd x\, \dd y =  \kappa_n  \abs{Du}(\Omega),
\]
showing that the constant sequence $u_k=u$ for all $k \in \N$ provides a recovery sequence.\smallskip

\textit{Case 2: $\delta \in (0,\infty)$.} For the liminf inequality, take a sequence $(u_k)_k$ converging weakly to $u$ in $L^2(\Omega)$. If $\bar{\delta} \in (0,\delta)$, then $\delta_k >\bar{\delta}$ for all $k \in \N$ large enough. Hence, it follows from Remark~\ref{rem:nonlocal}\,a) that
\[
\liminf_{k \to \infty} \Rcal_{\delta_k}(u_k) \geq \liminf_{k \to \infty} \frac{\bar{\delta}^n}{\delta_k^n}\Rcal_{\bar{\delta}}(u_k) \geq \frac{\bar{\delta}^n}{\delta^n}\Rcal_{\bar{\delta}}(u),
\]
where the last inequality uses the weak lower semicontinuity of $\Rcal_{\bar{\delta}}$, cf.~Remark~\ref{rem:nonlocal}\,b).  Letting $\bar{\delta} \nearrow \delta$ and using the monotone convergence theorem gives
\[
\liminf_{k \to \infty} \Rcal_{\delta_k}(u_k) \geq \Rcal_{\delta}(u).
\]

For the limsup inequality, consider $u \in L^2(\Omega)$ with $\Rcal_{\delta}(u) <\infty$. Since $\rho$ is non-increasing by (H$6_\delta$), we may extend $u$ to a function $\bar{u} \in L^2(\R^n)$ by reflection across the boundary of the Lipschitz domain $\Omega$ such that
\[
\int_{\R^n}\int_{\R^n} \frac{\abs{\bar{u}(x)-\bar{u}(y)}}{\abs{x-y}}\rho\left(\frac{\abs{x-y}}{\delta}\right)\dd x\, \dd y < \infty,
\]
cf.~\cite[Proof of Theorem~4]{BBM01}. With $(\varphi_{\varepsilon})_{\varepsilon}$ a family of smooth standard mollifiers, the sequence $u_l :=\varphi_{1/l}*\bar{u}$ for $l \in \N$ converges to $u$ in $L^2(\Omega)$ as $l \to \infty$, and we may argue similarly to the proof of the smooth approximation property $(iv)$ in Section~\ref{sec:4} to conclude that
\[
\lim_{l \to \infty} \Rcal_{\delta}(u_l) = \Rcal_\delta(u).
\]
\color{black}
With $\rho_{\delta}:=\delta^{-n}\rho(\abs{\cdot}/\delta)$ and for a fixed $l \in \N$, we find
that\begin{align*}
\abs{\Rcal_{\delta}(u_l)-\Rcal_{\delta_k}(u_l)} &\leq \int_{\Omega} \int_{\Omega} \frac{\abs{u_l(x)-u_l(y)}}{\abs{x-y}}\abs{\rho_{\delta}(x-y)-\rho_{\delta_k}(x-y)}\,\dd x\, \dd y \\[0.1cm]
&\leq \mathrm{Lip}(u_l)\abs{\Omega}\norm{\rho_{\delta}-\rho_{\delta_k}}_{L^1(\R^n)},
\end{align*}
where $\mathrm{Lip}(u_l)$ is the Lipschitz constant of $u_l$.
We have $\rho_{\delta_k} \to \rho_{\delta}$ in $L^1(\R^n)$ as $k \to \infty$ by a standard argument approximating $\rho$ with smooth functions. Hence, we obtain
\[
\lim_{k \to \infty} \Rcal_{\delta_k}(u_l)=\Rcal_{\delta}(u_l),
\]
and, letting $l \to \infty$, results in
\[
\lim_{l \to \infty} \lim_{k \to \infty} \Rcal_{\delta_k}(u_l)=\Rcal_{\delta}(u). 
\]
The limsup inequality now follows by extracting an appropriate diagonal sequence. \smallskip

\color{black}
\textit{Case 3: $\delta = \infty$.} The only nontrivial case is the limsup inequality, for which we take a sequence $(u_l)_l \subset C_c^{\infty}(\R^n)$ that converges to $u$ in $L^2(\Omega)$. Then, with $R$ larger than the diameter of $\Omega$, one obtains for every $l \in \N$ that
\begin{align*}
\Rcal_{\delta_k}(u_l)&=\frac{1}{\delta_k^n}\int_{\Omega}\int_{\Omega}\frac{\abs{u_l(x)-u_l(y)}}{\abs{x-y}}\rho\left(\frac{\abs{x-y}}{\delta_k}\right)\dd x\, \dd y\\
&\leq \mathrm{Lip}(u_l)\int_{\Omega}\int_{\Omega/\delta_k}\rho\left(\absB{z-\frac{y}{\delta_k}}\right)\dd z\, \dd y\leq \mathrm{Lip}(u_l)\int_{\Omega}\int_{B_{\frac{R}{\delta_k}}(0)}\rho(\abs{w})\,\dd w\, \dd y.
\end{align*}
As $k \to \infty$, the last quantity goes to zero since $\rho(\abs{\cdot}) \in L^1(\R^n)$. Therefore, we deduce that
\[
\lim_{k \to \infty} \Rcal_{\delta_k}(u_l)=0,
\]
and conclude again with a diagonal argument.
\end{proof}

\subsection{Conclusions and examples}
In both the Brezis \& Nguyen  and  the Aubert \& Kornprobst settings, we now find that the extension $\overline{\Ical}:[0,\infty] \to [0,\infty]$ is given by
\[
\overline{\Ical}(\delta) = \begin{cases}
\Ical(\delta) \qquad&\text{if $\delta \in (0,\infty)$},\\
\norm{w^{(0)}-u^c}_{L^2(\Omega;\R^N)}^2 \qquad&\text{if $\delta = 0$},\\
\norm{u^{\eta}-u^c}^2_{L^2(\Omega;\R^N)} \qquad&\text{if $\delta = \infty$},
\end{cases}
\]
where $w^{(0)}_j$ for $j \in \{1,\dots,N\}$ is the unique minimizer of the $TV$-reconstruction functional $\overline{\Jcal}_{0,j}$ (with different weight factors in the two cases).
In particular, we deduce from Theorem~\ref{theo:relax2} and Corollary~\ref{cor:relax} that $\overline{\Ical}$ is the relaxation of $\Ical$ and that these extended upper-level functionals $\overline{\Ical}$ admit minimizers $\bar{\delta} \in [0,\infty]$. To get an intuition about when this optimal parameter is attained at the boundary or in the interior of $\Lambda$, we present the following examples. 
\begin{example}\label{ex:nonlocal}
a) For both settings analyzed in this section, it is clear that if the noisy and clean image coincide, $u^{c} \equiv u^{\eta}$, then the reconstruction model with parameter $\delta = \infty$ gives the exact clean image back. Hence, in this case the optimal parameter is attained at the boundary point $\delta = \infty$. \smallskip

b) Next, we illustrate the case when the optimal parameter is attained at the boundary point $\delta = 0$. Consider the Aubert \& Kornprobst setting in Subsection \ref{sub:ak} and let $\Omega = (-1,1)$, $N=1$, $u^{c}=0$, and $u^{\eta}(x) = \kappa_nx$ for $x\in (-1,1)$.
The reconstruction of $u^{\eta}$ with the total variation regularizer $\Rcal_0$ in~\eqref{Rcal0AK} is of the form 
\begin{align*}
w^{(0)}= \max\{\theta_1, \min\{\theta_2, u^\eta\}\} \quad \text{ for some $\theta_1,\theta_2 \in \R$.}
\end{align*} 
To see this, we observe that $\overline{\Jcal}_0(\tilde u)\leq \overline{\Jcal}_{0}(u)$  for any $u \in BV(-1,1)$ with 
\[
\tilde{u} = \max\{u^{-}, \min\{u^{+}, u^\eta\}\},
\]
where $u^{-} := \mathrm{ess\,inf}_{x \in (-1,1)}u(x)$ and $u^{+} := \esssup_{x \in (-1,1)}u(x)$. Indeed, the map $\tilde{u}$ has at most the same total variation as $u$ and does not increase the distance to $u^{\eta}$ anywhere. Next, since $u^\eta$ is an odd function, the same should hold for the minimizer, meaning that $-\theta_1=\theta_2=:\theta \in [0,\kappa_n]$.
We can now determine the value of $\theta$ by optimizing the quantity $\overline{\Jcal}_{0}(w^{(0)})$ in $\theta$. This boils down to minimizing
\[
\frac{2}{3}\kappa_n^2\left(1-\frac{\theta}{\kappa_n}\right)^3 + 2\kappa_n\theta,
\]
and yields $\theta = 0$.  Hence, the reconstruction model for $\delta = 0$ yields the exact clean image, so that $\overline{\Ical}(0)=0$.
The same conclusions can be drawn for the Brezis \& Nguyen setting by replacing $\kappa_n$ in the example above with $K(\varphi)$. \smallskip

c) Let us finally address the case when $\overline{\Ical}$ becomes minimal inside $\Lambda=(0,\infty)$.  We work once again with the Aubert \& Kornprobst model from Subsection \ref{sub:ak}, and assume in addition to (H$6_\delta$) that the function $\rho$ is equal to $1$ in a neighborhood of zero. We consider the following conditions on the  pair of data points $(u^c,u^{\eta}) \in L^2(\Omega;\R^N)\times L^2(\Omega;\R^N)$:\smallskip
\begin{itemize}
\item[(H$7_\delta$)] $\norm{u^\eta-u^{c}}_{L^2(\Omega;\R^N)}^2 < \norm{w^{(0)}-u^c}_{L^2(\Omega;\R^N)}^2;$  \smallskip \item[(H$8_\delta$)] $\sum_{j=1}^N\widetilde{\Rcal}(u^c_j)<\sum_{j=1}^{N}\widetilde{\Rcal}(u^\eta_j);$
\end{itemize}
\smallskip
here, $w^{(0)}$ is the componentwise minimizer of the $TV$-reconstruction functional $\overline{\Jcal}_0$ and we set
\begin{align}\label{Rcaltilde_delta}
\widetilde{\Rcal}(u):=\int_{\Omega}\int_{\Omega} \frac{\abs{u(x)-u(y)}}{\abs{x-y}} \,\dd x \,\dd y \quad\text{for $u\in L^2(\Omega)$.}
\end{align}
The two hypotheses above can be realized, for example, by taking $u^{\eta} =(1+\varepsilon)u^{c}$ for some small $\varepsilon >0$ and $w^{(0)}\not = u^c$. 

Notice that (H$7_\delta$) immediately rules out $\delta=0$ as an optimal candidate, since the reconstruction at $\delta=\infty$ is better.  On the other hand, $\rho$ is supposed to be equal to $1$ near the zero, so that we  infer for large enough $\delta$ that
\begin{align}\label{Rcal_delta2}
\Rcal_{\delta}(u)=\frac{1}{\delta^n}\int_{\Omega}\int_{\Omega}\frac{\abs{u(x)-u(y)}}{\abs{x-y}}\,\dd x\, \dd y = \frac{1}{\delta^n}\widetilde{\Rcal}(u)
\end{align}
for all $u \in L^2(\Omega)$. Since, for large $\delta$, the dependence of the regularizer on $\delta$ is of the same type as the weight case from Section~\ref{sec:weight}, we may apply Lemma~\ref{lemma:not-min}\,$(i)$ in view of~(H$8_\delta$). This yields, for all $\delta$ large enough, that
\[
\norm{u^c-w^{(\delta)}}^2_{L^2(\Omega;\R^N)} < \norm{u^c-u^\eta}^2_{L^2(\Omega;\R^N)},
\]
with $w^{(\delta)}$ the minimizer of $\Jcal_\delta$.  This shows that the optimal parameter is not attained at $\delta=\infty$ either and, as a result, needs to be attained inside $\Lambda=(0,\infty)$. Hence, the optimal regularizer lies within the class we started with.

The same conclusions can be drawn for the Brezis \& Nguyen case described in Subsection \ref{sub:BN} if we assume that $\varphi(t) = ct^r$ for small $t$ with $c>0$ and $r \geq 2$. One may take, for instance, the normalized version of the second function in Example~\ref{egAcal}. 
We then suppose that the pair of data points $(u^c, u^\eta)$ satisfies (H$7_\delta$)  and (H$8_\delta$), but now instead of~\eqref{Rcaltilde_delta}, take
\begin{align}\label{Rcaltilde_delta}
\widetilde{\Rcal}(u):=c\int_{\Omega}\int_{\Omega} \frac{\abs{u(x)-u(y)}^r}{\abs{x-y}^{n+1}}\, \dd x\, \dd y\quad\text{for $u\in L^2(\Omega)$.}
\end{align}
 
We observe with $l=\norm{u^{\eta}}_{L^{\infty}(\Omega;\R^N)}$ (which we assume to be finite) and $T^l$ the truncation as in the proof of Proposition~\ref{th:BNgammanonlocal} that
\[
\Jcal_{\delta}(T^l \circ u) \leq \Jcal_{\delta}(u)
\]
for all $u \in L^2(\Omega)$ and $\delta \in (0,\infty)$. Therefore, we may restrict our analysis to functions $u \in L^2(\Omega)$ with $\abs{u(x)-u(y)} \leq 2l$ for all $x,y \in \Omega$. By additionally considering $\delta$ large enough, we now find
\[
\varphi\left(\frac{\abs{u(x)-u(y)}}{\delta}\right)=c\,\frac{\abs{u(x)-u(y)}^r}{\delta^r};
\]
 hence,
\[
\Rcal_{\delta}(u) = \frac{c}{\delta^{r-1}}\int_{\Omega}\int_{\Omega} \frac{\abs{u(x)-u(y)}^r}{\abs{x-y}^{n+1}} \,\dd x \, \dd y =\frac{1}{\delta^{r-1}}\widetilde{\Rcal}(u)
\] 
in analogy to~\eqref{Rcal_delta2}.
\end{example}

\section{Tuning the fractional parameter}
\label{sec: fractional}

This final section revolves around regularization via the $L^2$-norm of the spectral fractional Laplacian of order $s/2$, with $s$ in the parameter range $\Lambda =(0,1)$. Our aim here is twofold. First, we determine the Mosco-limits of the regularizers, which allows us to conclude in view of the general theory in Section~\ref{sec:general} that the extended bi-level problem recovers local models at the boundary points of $\overline{\Lambda}=[0,1]$. Second, we provide analytic conditions ensuring that the optimal parameter lies in the interior of $(0,1)$, and illustrate them with an explicit example.

The motivation behind the fractional Laplacian as a regularizer comes from \cite{AnB17}, where the authors show that replacing the total variation in the classical ROF model~\cite{ROF92} with a spectral fractional Laplacian can lead to comparable reconstruction results with a much smaller computational cost, if the order is chosen correctly. An abstract optimization of the fractional parameter for  the spectral fractional Laplacian has already been undertaken in \cite{BaW20}, although we remark that a convex penalization term is added there to the model  to ensure that the optimal fractional parameter lies inside $(0,1)$.

We begin with the problem set-up. Let $\Omega \subset \R^n$ be a bounded Lipschitz domain and let $(\psi_m)_{m \in \N} \subset H_0^1(\Omega)$ be a sequence of eigenfunctions associated with the Laplace operator  $(-\Delta)$ forming an orthonormal basis of $L^2(\Omega)$. With the corresponding eigenvalues  $0<\lambda_1 \leq \lambda_2\leq \lambda_3\leq  \dots \nearrow \infty$, it holds  for every $m\in \N$ that
\begin{equation}\label{eq:eigenbasis}
\begin{cases}
(-\Delta)\psi_m = \lambda_m \psi_m \qquad&\text{in $\Omega$},\\
\psi_m = 0 \qquad&\text{on  $\partial \Omega$}.
\end{cases}
\end{equation}
Denoting the projection of any $u\in L^2(\Omega)$ onto the $m$th eigenfunction $\psi_m$ by
\begin{align*}
\hat u_m:=\langle u,\psi_m \rangle_{L^2(\Omega)},
\end{align*}
we have the representation $u=\sum_{m=1}^\infty \hat u_m \psi_m$.

With this at hand, one can define for $s  \in (0,1)$ the fractional Sobolev spaces
\begin{align*}
\Hbb^{ s }(\Omega):=\Bigl\{u = \sum_{m=1}^{\infty}\hat{u}_m\psi_m \in L^2(\Omega) \, : \, \sum_{m=1}^{\infty}\lambda_m^{ s }\hat{u}_m^2<\infty\Bigr\},
\end{align*} 
endowed with the inner product
\[
\dprlr{u,v}_{\Hbb^{ s }(\Omega)} := \sum_{m=1}^{\infty}\lambda_m^{ s }\hat{u}_m \hat{v}_m.
\]
It holds that $\Hbb^s(\Omega)$ is a Hilbert space for every $s\in (0,1)$; for more details on these spaces, we refer, e.g.,~to~\cite{CaSt16, NOS15}.
In view of \eqref{eq:eigenbasis}, the so-called spectral fractional Laplacian of order $ s /2$ (with Dirichlet boundary conditions) on these spaces is defined as
\[
(-\Delta_D)^{ s /2}:\Hbb^{ s }(\Omega) \to L^2(\Omega), \quad (-\Delta_D)^{ s /2}u = \sum_{m=1}^{\infty}\lambda_{m}^{ s /2}\hat{u}_m\psi_m.
\]

For $ s  \in (0,1)$, we consider the regularizer 
\begin{align}\label{Ralpha6}
\Rcal_{ s }: L^2(\Omega) \to [0,\infty], \quad \Rcal_{ s }(u) = \begin{cases}
\mu\norm{(-\Delta_{D})^{ s /2}u}_{L^2(\Omega)}^2 &\text{for $u \in \Hbb^{ s }(\Omega)$},\\
\infty &\text{otherwise},
\end{cases}
\end{align}
with some $\mu >0$. At the end of this section (see Remark~\ref{rmk:str-pres}), the weight parameter \(\mu\) will be used to exhibit examples where structure preservation holds.
The regularizers $\Rcal_{s}$ coincide with $\mu \norm{\cdot}^2_{\Hbb^{ s }(\Omega)}$ on $\Hbb^{ s }(\Omega)$, and are $L^2$-weakly lower semicontinuous because $u_k \weakly u$ in $L^2(\Omega)$ yields\[
\liminf_{k \to \infty} \Rcal_s(u_k)= \liminf_{k \to \infty}\mu\sum_{m=1}^{\infty}\lambda^s_m\widehat{(u_k)}_m^2\geq \mu \sum_{m=1}^{\infty}\lambda^s_m\widehat{u}_m^2=\Rcal_s(u)
\]
by a discrete version of Fatou's lemma. Therefore, the hypotheses in \eqref{eq:assumptions} from Section~\ref{sec:general} are satisfied.

Next, we determine the Mosco-limits of the regularizers, and thereby, provide the basis for extending the upper-level functional according to Section~\ref{sec:general}. 

\begin{proposition}[\boldmath{Mosco}-convergence of the regularizers]\label{prop:gammafractional}
Let \( \Lambda:=(0,1)\) and $\Rcal_s$ for each \( s  \in \Lambda\) be given by \eqref{Ralpha6}. Then, for $u\in L^2(\Omega)$ and $s\in \overline{\Lambda}=[0,1]$,
\begin{equation}\label{eq:gammafractional}
\overline{\Rcal}_s(u)={\rm Mosc}(L^2)\text{-}\lim_{ s ' \to  s } \Rcal_{s'}(u)=\begin{cases}
\Rcal_{ s}(u) &\text{if} \  s  \in (0,1),\\
\mu\norm{u}^2_{L^2(\Omega)}&\text{if} \  s  =0,\\
\mu\norm{\nabla u}^2_{L^2(\Omega)} + \chi_{H^1_0(\Omega)}(u) &\text{if} \  s =1.
\end{cases}
\end{equation}
\end{proposition}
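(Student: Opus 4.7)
The plan is to fix an arbitrary sequence $(s_k)_k \subset \Lambda = (0,1)$ converging to $s \in [0,1]$ and, in each of the three cases for $s$, verify both a weak-$L^2$ liminf inequality and the existence of a strongly-$L^2$ convergent recovery sequence. The key tool throughout is the orthonormal expansion $u = \sum_{m=1}^\infty \hat{u}_m \psi_m$: weak $L^2$-convergence $u_k \weakly u$ yields termwise convergence $\dpr{u_k,\psi_m}_{L^2(\Omega)} \to \hat u_m$ for every $m \in \N$, which, coupled with the continuity of $s \mapsto \lambda_m^s$, lets me pass to the limit in each summand of the series defining $\Rcal_{s_k}$.

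For the liminf inequality I would truncate at level $N$ and write
\begin{equation*}
\liminf_{k\to\infty} \Rcal_{s_k}(u_k) \geq \liminf_{k\to\infty} \mu \sum_{m=1}^N \lambda_m^{s_k}\dpr{u_k,\psi_m}_{L^2(\Omega)}^2 = \mu \sum_{m=1}^N \lambda_m^s \hat{u}_m^2,
\end{equation*}
and then send $N \to \infty$. For $s \in (0,1)$ this recovers $\Rcal_s(u)$ directly, for $s = 0$ it yields $\mu\|u\|_{L^2(\Omega)}^2$ via $\lambda_m^0 = 1$, and for $s = 1$ it gives $\mu\sum_m \lambda_m \hat u_m^2$, which equals $\mu\|\nabla u\|_{L^2(\Omega)}^2$ when $u\in H^1_0(\Omega) = \Hbb^1(\Omega)$ and diverges to $+\infty$ otherwise, matching the indicator $\chi_{H^1_0(\Omega)}$.

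The recovery sequence is the more delicate step, and also the main obstacle I anticipate. The naive choice $u_k = u$ fails in general when $s \in [0,1)$ because $u\in\Hbb^s(\Omega)$ need not lie in $\Hbb^{s_k}(\Omega)$ for $s_k>s$. My proposal is to use the Galerkin truncations $v_M := \sum_{m=1}^M \hat u_m\psi_m$, which belong to $\Hbb^{s'}(\Omega)$ for every $s'$. For fixed $M$, continuity of the finite sum in $s$ gives $\Rcal_{s_k}(v_M)\to \mu\sum_{m=1}^M \lambda_m^s \hat u_m^2$ as $k\to\infty$, while the right-hand side tends to $\Rcal_s(u)$ (respectively $\mu\|u\|_{L^2(\Omega)}^2$ when $s=0$) as $M\to\infty$; since $v_M\to u$ strongly in $L^2(\Omega)$, a standard diagonal extraction yields $M_k\to\infty$ such that $u_k:=v_{M_k}$ is a strong recovery sequence. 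For $s=1$, the constant sequence $u_k = u \in \Hbb^1(\Omega) = H^1_0(\Omega)$ works: dominated convergence in $\sum_m \lambda_m^{s_k}\hat u_m^2$ is justified by the bound $\lambda_m^{s_k} \leq 1+\lambda_m$ for $s_k \in (0,1)$ together with the summability $\sum_m(1+\lambda_m)\hat u_m^2 = \|u\|_{L^2(\Omega)}^2 + \|\nabla u\|_{L^2(\Omega)}^2 < \infty$.
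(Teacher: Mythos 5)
Your argument is correct and essentially coincides with the paper's proof: the liminf inequality via termwise convergence of the coefficients $\widehat{(u_k)}_m$ under weak $L^2$-convergence combined with truncation (the paper phrases this as a discrete Fatou lemma, together with the identities $\norm{u}^2_{L^2(\Omega)}=\sum_m \widehat{u}_m^2$ and $\norm{\nabla u}^2_{L^2(\Omega)}+\chi_{H^1_0(\Omega)}(u)=\sum_m \lambda_m\widehat{u}_m^2$), and the recovery sequence via the spectral truncations $\sum_{m=1}^M \hat u_m\psi_m$ followed by a diagonal extraction, with dominated convergence for the constant sequence where it applies. The only difference is cosmetic: the paper first treats $u\in H^1_0(\Omega)$ by dominated convergence and then truncates, whereas you pass to finite sums directly, which is an equivalent reorganization.
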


\begin{proof}
Let us observe up front that for all $u \in L^2(\Omega)$,
\begin{equation}\label{eq:regident}
\norm{u}^2_{L^2(\Omega)}=\sum_{m=1}^{\infty} \widehat{u}_m^2
\quad \text{and} \quad \norm{\nabla u}^2_{L^2(\Omega)} + \chi_{H^1_0(\Omega)}(u)=\sum_{m=1}^{\infty} \lambda_m\widehat{u}_m^2;
\end{equation}
indeed, the  first formula is simply Parseval's identity, while the second one is a consequence of  $\nabla u=\sum_{m=1}^\infty \hat u_m\nabla \psi_m$ for $u\in H_0^1(\Omega)$ and the orthogonality  in $L^2(\Omega;\R^n)$ of the gradients
$(\nabla \psi_m)_m$ with
\[
\norm{\nabla \psi_m}^2_{L^2(\Omega;\R^n)} =-\int_{\Omega} \psi_m\, \Delta\psi_m\,\dd{x}=\int_\Omega\lambda_m\psi_m^2\,\dd{x} =\lambda_m.
\]

Fixing a sequence $( s _k)_k\subset (0,1)$ with limit $ s  \in [0,1]$, we want to prove now that the  Mosco-limit of $(\Rcal_{{ s_k }})_k$ exists and is given by the right-hand side of \eqref{eq:gammafractional}. 
\smallskip

\textit{Step 1: The liminf-inequality.} Let $u_k \rightharpoonup u$ in $L^2(\Omega)$, and assume without loss of generality that $\liminf_{k\to \infty}\Rcal_{ s _k}(u_k)<\infty$. Then, since $\widehat{(u_k)}_m \to \widehat{u}_m$ for each $m \in \N$ as $k \to \infty$, it follows from a discrete version of Fatou's lemma that
\[
\infty > \liminf_{k\to \infty}\Rcal_{s_k}(u_k) = \liminf_{k \to \infty} \mu\sum_{m=1}^{\infty} \lambda_m^{s_k}\widehat{(u_k)}_m^2 \geq \mu\sum_{m=1}^{\infty} \lambda_m^s \widehat{u}_m^2.
\]
In light of \eqref{eq:regident} for the cases $s\in\{0,1\}$, the last quantity equals the regularizer on the right hand side of \eqref{eq:gammafractional} in all the three regimes. This finishes the proof of the lower bound. \smallskip

\textit{Step 2: Construction of a recovery sequence}. We first consider the $u\in H^1_0(\Omega)$ case. By the regularity of \(u\) and Lebesgue's dominated converge theorem (applied
to the counting measure), we get
\[
\lim_{k \to \infty}\Rcal_{ s _k}(u)=\lim_{k \to \infty}\mu\sum_{m=1}^{\infty}
\lambda_m^{ s _k}\widehat{u}_m^2=\mu\sum_{m=1}^{\infty}
\lambda_m^{ s }\widehat{u}_m^2,
\]
which concludes the proof for $u\in H^1_0(\Omega)$.

In the general case where $u\in \Hbb^{ s }(\Omega)$, we consider the sequence $(u_l)_l \subset H^1_0(\Omega)$ defined by 
$u_l:=\sum_{m=1}^l \hat{u}_m\psi_m$ for every $l\in \mathbb{N}$. Then, by construction, $u_l\to u$ strongly in $L^2(\Omega)$ and  
\[
\lim_{l \to \infty} \sum_{m=1}^{\infty} \lambda_m^{s}\widehat{(u_l)}_m^2=\lim_{l \to \infty}\sum_{m=1}^l \lambda_m^s \widehat{u}_m^2=\sum_{m=1}^{\infty} \lambda_m^{s} \widehat{u}_m^2.
\] 
The existence of a recovery sequence follows then by classical diagonalization arguments, using the previous case. 
\end{proof}

Given clean and  noisy images, $\ucl$, $\uet \in L^2(\Omega;\R^N)$, we work with the reconstruction functionals
\[\Jcal_{ s , j}:L^2(\R^n) \to [0,\infty],\quad 
\Jcal_{ s ,j}(u)=\norm{u-\uet_j}_{L^2(\Omega)}^2+\Rcal_{ s }(u)
\]
for $s\in (0,1)$ and $j\in\{1, \ldots, N\}$.  
 Recalling \eqref{training} and~\eqref{trainingextended}, we obtain as a consequence of Proposition~\ref{prop:gammafractional}  that the extension of the upper-level functional $\Ical$ to $\overline{\Lambda}$ is given by
\begin{equation*}
\overline{\Ical}:[0,1] \to [0,\infty], \qquad \overline{\Ical}( s ) = \norm{ w^{(s)} - \ucl}_{L^2(\Omega;\R^N)}^2;
\end{equation*}
here, $w^{(s)}=(w^{(s)}_1, \ldots, w^{(s)}_N)$ with $w^{(s)}_j$ the unique minimizer of the strictly convex functional
\begin{align}\label{reconstructJsj}
\overline{\Jcal}_{ s ,j}(u)=\norm{u-\uet_j}_{L^2(\Omega)}^2+\overline{\Rcal}_{ s }(u) =\sum_{m=1}^{\infty} (\hat{u}_m-\widehat{(\uet_j)}_m)^2+\mu\lambda_m^{ s }\hat{u}_m^2.
\end{align}  
By Theorem~\ref{theo:relax2}, $\overline{\Ical}$ is then the relaxation of $\Ical$ and has a minimizer in $\overline{\Lambda}=[0,1]$.

We now continue by exhibiting conditions under which the minimum of $\overline{\Ical}$ is attained inside $(0,1)$. 
This is based on a direct approach, observing that the components of $w^{(s)}$ can be determined explicitly by minimizing the entries of the sum in \eqref{reconstructJsj} individually. This gives the representation
\begin{equation}\label{eq:fracminimizer}
 w^{(s)}_j = \sum_{m=1}^{\infty} \frac{1}{1+\mu\lambda_m^{ s }}\widehat{(\uet_j)}_m\psi_m\qquad \text{for $j\in\{1, \ldots, N\}$. }
\end{equation}
The following lemma investigates how $ w^{(s)}$ varies with $ s $. In the  $s>0$ case, this lemma  is essentially  contained in \cite[Theorem~2]{BaW20} (i.e., in a slightly different setting
with periodic instead of Dirichlet boundary conditions). The proof below contains some additional details for the reader's convenience. 
\begin{lemma}
\label{lemma:der}
Assume that $u^\eta\in  \Hbb^{\varepsilon}(\Omega;\R^N)$ for some $\varepsilon \in (0,1)$. Then, the map $[0,1]\mapsto L^2(\Omega;\R^N),\ s\mapsto w^{(s)}$ is Fr\'echet-differentiable with derivative
\begin{equation}\label{eq:derivformula}
\partial_s  w^{(s)}=-\sum_{m=1}^{\infty} \frac{\mu\log(\lambda_m)\lambda_m^{ s }}{(1+\mu\lambda_m^{ s })^2} \widehat{\uet}_m\psi_m. 
\end{equation} 
\end{lemma}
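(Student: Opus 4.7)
The plan is to exploit the explicit series representation \eqref{eq:fracminimizer} and reduce matters to a scalar computation in the eigenbasis. Setting $F_m:[0,1]\to\R$ by $F_m(t):=1/(1+\mu\lambda_m^t)$, the candidate derivative \eqref{eq:derivformula} is obtained through formal term-by-term differentiation of $w^{(s)}_j=\sum_m F_m(s)\,\widehat{(u^\eta_j)}_m\,\psi_m$, using
\[
F_m'(t) = -\frac{\mu(\log \lambda_m)\lambda_m^t}{(1+\mu\lambda_m^t)^2}.
\]
Before handling the difference quotient, I would first check that the series in \eqref{eq:derivformula} actually defines an element of $L^2(\Omega;\R^N)$. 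The coefficients satisfy $|F_m'(t)|\leq C\,|\log\lambda_m|$ uniformly for $t\in[0,1]$, so combining this with the assumption $u^\eta\in\Hbb^\varepsilon(\Omega;\R^N)$ and the elementary growth bound $(\log\lambda_m)^2\leq C_\varepsilon\,\lambda_m^\varepsilon$ for $m$ large, Parseval's identity yields $\sum_m |F_m'(s)|^2\,|\widehat{u^\eta}_m|^2<\infty$.

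For the Fréchet differentiability itself at a given $s\in[0,1]$ (one-sided at the endpoints $s\in\{0,1\}$), I would apply a second-order Taylor expansion to each $F_m$ and write, for every $h$ with $s+h\in[0,1]$,
\[
w^{(s+h)}_j - w^{(s)}_j - h\,\partial_s w^{(s)}_j = \sum_{m=1}^{\infty} R_m(s,h)\,\widehat{(u^\eta_j)}_m\,\psi_m,
\]
where the remainder satisfies $|R_m(s,h)|\leq\tfrac{h^2}{2}\sup_{\tau\in[s,s+h]}|F_m''(\tau)|$. A direct calculation gives
\[
F_m''(\tau) = (\log\lambda_m)^2\,\frac{\mu\lambda_m^{\tau}(\mu\lambda_m^\tau-1)}{(1+\mu\lambda_m^\tau)^3},
\]
and the elementary estimate $|u(u-1)/(1+u)^3|\leq 1$ valid for all $u\geq 0$ produces $|F_m''(\tau)|\leq(\log\lambda_m)^2$ independently of $\tau\in[0,1]$. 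Orthonormality of $(\psi_m)_m$ together with Parseval then yields
\[
\bigl\|w^{(s+h)} - w^{(s)} - h\,\partial_s w^{(s)}\bigr\|_{L^2(\Omega;\R^N)}^2 \leq \frac{h^4}{4}\sum_{m=1}^{\infty}(\log\lambda_m)^4\,|\widehat{u^\eta}_m|^2.
\]
Using again $u^\eta\in\Hbb^\varepsilon$ and $(\log\lambda_m)^4=o(\lambda_m^\varepsilon)$ as $m\to\infty$, the sum on the right is finite, so the whole right-hand side is $O(h^4)$; after taking square roots this is $o(h)$, which establishes the Fréchet differentiability with derivative \eqref{eq:derivformula}.

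The main obstacle is controlling the logarithmic factors that appear upon differentiating $\lambda_m^t$ in $t$: the natural term-by-term estimates carry growth rates of the form $(\log\lambda_m)^k$ that are not \emph{a priori} summable against the Fourier coefficients $|\widehat{u^\eta}_m|^2$. This is precisely what the regularity hypothesis $u^\eta\in\Hbb^\varepsilon$ is designed to absorb, through the decay $(\log\lambda_m)^k=o(\lambda_m^\varepsilon)$ valid for any $k\in\N$ and any $\varepsilon>0$; without some strictly positive amount of extra regularity, the first-order derivative series already fails to converge in $L^2$ (already at $s=0$), which is why the assumption on $\varepsilon$ cannot be dropped.
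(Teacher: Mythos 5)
Your argument is correct and takes essentially the same route as the paper: expand $w^{(s)}$ in the eigenbasis, differentiate the scalar coefficients $F_m(t)=1/(1+\mu\lambda_m^t)$ term by term, and absorb the resulting $\log\lambda_m$ factors through the assumption $u^\eta\in\Hbb^{\varepsilon}(\Omega;\R^N)$. The only (harmless) technical difference is that the paper controls the difference quotients via the mean value theorem combined with dominated convergence for the counting measure, whereas you use a second-order Taylor remainder, which even gives the quantitative rate $O(h^2)$; just note that your closing remark that the derivative series ``already fails'' to converge for general $L^2$ data should read ``may fail'', though this aside plays no role in the proof.
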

\begin{proof}
For $j\in \{1, \ldots, N\}$, we set $$v_j:=-\sum_{m=1}^{\infty} \frac{\mu\log(\lambda_m)\lambda_m^{ s }}{(1+\mu\lambda_m^{ s })^2}\widehat{(\uet_j)}_m\psi_m,$$ which is a well-defined element of $L^2(\Omega)$ for all $s \in [0,1]$ because $u^{\eta}_j \in \Hbb^{\varepsilon}(\Omega)$. Since
\[
\frac{\ubet_j -  w^{(s)}_j}{t -  s } = \sum_{m=1}^{\infty}\frac{1}{t- s }\left(\frac{1}{1+\mu\lambda^{t}_m}-\frac{1}{1+\mu\lambda^{ s }_m}\right)\widehat{(\uet_j)}_m\psi_m, \quad s, t\in[0,1],
\]
in view of~\eqref{eq:fracminimizer}, we can apply the mean value theorem to obtain, for each $m\in \mathbb{N}$, a value $\gamma$ in between $ s $ and $t$ such that
\[
\absBB{\frac{1}{t- s }\left(\frac{1}{1+\mu\lambda^{t}_m}-\frac{1}{1+\mu\lambda^{ s }_m}\right)} \leq \absBB{\frac{\mu\log(\lambda_m)\lambda_m^{\gamma}}{(1+\mu\lambda_m^{\gamma})^2}}\leq  \abs{\log(\lambda_m)}.
\]
Exploiting once again that $u^\eta_j\in \Hbb^{\varepsilon}(\Omega)$ gives\begin{align*}
\normBB{\frac{\ubet_j -  w^{(s)}_j}{t -  s }-v_j}_{L^{2}(\Omega)}^2&=\sum_{m=1}^{\infty}\absBB{\frac{1}{t- s }\left(\frac{1}{1+\mu\lambda^{t}_m}-\frac{1}{1+\mu\lambda^{ s }_m}\right)+\frac{\mu\log(\lambda_m)\lambda_m^{ s }}{(1+\mu\lambda_m^{ s })^2}}^2\widehat{(\uet_j)}_m^2\\
&\leq\sum_{m=1}^{\infty}\abs{\log(\lambda_m)}^2\widehat{(\uet_j)}_m^2 <\infty.
\end{align*}
In particular, we may take the limit $t\to s$ on the left-hand side of the preceding estimate and interchange with the sum to show the claim.
\end{proof}
It follows as a consequence of Lemma \ref{lemma:der} that the upper level function $\overline{\Ical}:[0,1]\to [0,\infty]$ is differentiable with derivative
\[
\overline{\Ical}'(s)
= 2\left\langle\partial_s  w^{(s)},  w^{(s)}-\ucl\right\rangle_{L^2(\Omega;\R^N)}
\]
for $s\in [0,1]$; at the boundary points $s=0$ and $s=1$, $\overline{\Ical}'(s)$ stands for the one-sided derivative.
Therefore, the simple conditions
\[
 \overline{\Ical}'(0) < 0 \quad \text{and} \quad  
  \overline{\Ical}'(1)  > 0,
\]
imply that $\overline{\Ical}$ does not attain its minimizer at $ s  =0$ or at $ s  = 1$, respectively. Due to~\eqref{eq:derivformula} and~\eqref{eq:fracminimizer}, these requirements can be written as follows:
\begin{itemize}
\item[(H$1_s$)] $\displaystyle\sum_{j=1}^N\sum_{m=1}^{\infty}\log(\lambda_m)\widehat{(\uet_j)}_m\left(\widehat{(\uet_j)}_m-(1+\mu)\widehat{(\ucl_j)}_m\right) >0;$ 
\item[(H$2_s$)] $\displaystyle\sum_{j=1}^N\sum_{m=1}^{\infty}\frac{\log(\lambda_m)\lambda_m}{(1+\mu\lambda_m)^3}\widehat{(\uet_j)}_m\left(\widehat{(\uet_j)}_m-(1+\mu\lambda_m)\widehat{(\ucl_j)}_m\right) <0.$
\end{itemize}
\color{black}
Since (H$1_s$) guarantees that the minimizer of $\overline{\Ical}$ is not $ s =0$ and (H$2_s$) ensures the minimizer to be different from $ s  =1$, Corollary~\ref{cor:relax}\,$(iii)$ yields the following result.
\begin{corollary}
Suppose that $\uet \in \Hbb^{\varepsilon}(\Omega;\RR^N)$ for some $\varepsilon\in (0,1)$, and that assumptions {\rm (H$1_s$)} and {\rm (H$2_s$)} are satisfied. Then, $\Ical$ admits a minimizer $\bar{ s } \in (0,1)$.
\end{corollary}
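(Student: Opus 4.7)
The plan is to assemble three ingredients already established in the preceding discussion and apply them in sequence. First, I would invoke the general relaxation theory: by Proposition~\ref{prop:gammafractional} the Mosco-limits $\overline{\Rcal}_s$ exist for every $s\in[0,1]$, and the strict convexity of each $\overline{\Jcal}_{s,j}$ in~\eqref{reconstructJsj} ensures that the componentwise minimizer set $\overline{K}_s$ is a singleton. Hence conditions~$(i)$ and~$(ii)$ of Theorem~\ref{theo:relax2} are satisfied, and since $\overline{\Lambda}=[0,1]$ is compact, Corollary~\ref{cor:relax}$(i)$ yields the existence of a minimizer $\bar s\in[0,1]$ of the extension $\overline{\Ical}$.

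The second step is to exclude the boundary points. Using Lemma~\ref{lemma:der}, which applies precisely under the hypothesis $u^\eta\in \Hbb^\varepsilon(\Omega;\R^N)$, together with the chain rule for $\overline{\Ical}(s)=\|w^{(s)}-u^c\|_{L^2(\Omega;\R^N)}^2$, I obtain that $\overline{\Ical}$ is differentiable on $[0,1]$ (one-sided at the endpoints) with
\[
\overline{\Ical}'(s)=2\bigl\langle \partial_s w^{(s)},\,w^{(s)}-u^c\bigr\rangle_{L^2(\Omega;\R^N)}.
\]
Plugging the series representations~\eqref{eq:fracminimizer} and~\eqref{eq:derivformula} into this inner product, expanding in the orthonormal basis $(\psi_m)_m$, and evaluating at $s=0$ and $s=1$ gives (up to the positive constant $\mu$) exactly the expressions appearing in (H$1_s$) and (H$2_s$). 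Thus (H$1_s$) yields $\overline{\Ical}'(0)<0$, which prevents the minimum from being attained at $\bar s=0$, and (H$2_s$) gives $\overline{\Ical}'(1)>0$, ruling out $\bar s=1$. Consequently $\bar s\in(0,1)=\Lambda$.

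Finally, since $\bar s$ lies in the interior and minimizes the extension $\overline{\Ical}$, Corollary~\ref{cor:relax}$(iii)$ directly implies that $\bar s$ is also a minimizer of the original upper-level functional $\Ical$, as required.

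The only non-routine point is verifying the termwise manipulations that identify the derivatives at the endpoints with the sums in (H$1_s$) and (H$2_s$). The logarithmic factor $\log\lambda_m$ coming from differentiating $\lambda_m^s$ must be controlled against the coefficients $\widehat{(u^\eta_j)}_m$; this is exactly where the assumption $u^\eta\in \Hbb^\varepsilon(\Omega;\R^N)$ is used in Lemma~\ref{lemma:der}, since the extra regularity absorbs the logarithmic blow-up via the simple estimate $|\log\lambda_m|\lesssim \lambda_m^{\varepsilon/2}$ for $m$ large. Once this absolute convergence is in hand, both the differentiation and the sign extraction become straightforward, and the argument concludes as above.
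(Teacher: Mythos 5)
Your proposal is correct and follows essentially the same route as the paper: verify hypotheses $(i)$ and $(ii)$ of Theorem~\ref{theo:relax2} via Proposition~\ref{prop:gammafractional} and the strict convexity of $\overline{\Jcal}_{s,j}$, differentiate $\overline{\Ical}$ using Lemma~\ref{lemma:der}, identify the one-sided derivatives at $s=0$ and $s=1$ with the sums in (H$1_s$) and (H$2_s$) to exclude the endpoints, and conclude with Corollary~\ref{cor:relax}\,$(iii)$. The sign bookkeeping and the use of $u^\eta\in\Hbb^{\varepsilon}(\Omega;\R^N)$ to control the logarithmic factors are exactly as in the paper's argument.
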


We close this section with an interpretation of the conditions (H$1_s$) and (H$2_s$), and a specific example in which they are both satisfied. 

\begin{remark}\label{rmk:str-pres}
a) Suppose that $N=1$. Decomposing the noisy image into the sum of the clean image and the noise, i.e., $\uet = \ucl+\eta$, turns (H$1_s$) and (H$2_s$) into
\begin{equation}\label{eq:noisecond}
\begin{cases}
\displaystyle\sum_{m=1}^{\infty}\log(\lambda_m)\left(-\mu \widehat{\ucl}_m^2 + (1-\mu)\widehat{\ucl}_m\widehat{\eta}_m+\widehat{\eta}_m^2\right)>0,\\
\displaystyle\sum_{m=1}^{\infty}\frac{\log(\lambda_m)\lambda_m}{(1+\mu\lambda_m)^3}\left(-\mu\lambda_m \widehat{\ucl}_m^2 + (1-\mu\lambda_m)\widehat{\ucl}_m\widehat{\eta}_m+\widehat{\eta}_m^2\right)<0.
\end{cases}
\end{equation}
If we assume that the noise has mostly high frequencies and that the clean image has mostly moderate frequencies, then the mixed terms in~\eqref{eq:noisecond} will be small. The first condition is then close to
\[
-\mu\sum_{m=1}^{\infty}\log(\lambda_m)\widehat{\ucl}_m^2 + \sum_{m=1}^{\infty}\log(\lambda_m)\widehat{\eta}_m^2 >0,
\]
which holds for sufficiently small $\mu$. Similarly, for sufficiently large $\mu$, the second condition is satisfied. As we analyse in b) below, there are instances where we can find a range for \(\mu\) that implies both conditions.  \smallskip 

b) In the case where $\Omega = (0,\pi)^2$, by indexing the eigenfunctions via $m = (m_1,m_2) \in \Z^2, \ m_1, m_2 \not = 0$, we find
\[
\psi_{m}(x) = \sin(m_1x_1)\sin(m_2x_2)
\]
with corresponding eigenvalues $\lambda_m = m_1^2 + m_2^2$. By choosing $u^c = \psi_{(1,1)}$  as the clean image and  $\eta = \frac{1}{10}\psi_{(10,10)}$ as the noise, the condition~\eqref{eq:noisecond} turns into
\[
\begin{cases}
-100 \,\mu \log(2)+\log(200) >0,\\
\displaystyle -\mu \frac{4\log(2)}{(1+2\mu)^3}+\frac{2\log(200)}{(1+200\mu)^3}<0,
\end{cases}
\]
which is satisfied for
\[
0.0236 \approx \mu_{-} < \mu < \mu_{+} \approx 0.0764.
\]
On the other hand, when $\mu = 0.023$, then $ s  = 1$ is optimal, while the optimal solution for $\mu
= 0.11$  is $ s  = 0$. This can be seen numerically as for these values of $\mu$, the derivative $\overline{\Ical}'$ is either negative or positive on $[0,1]$, respectively. 
\end{remark}
 
 \section*{Acknowledgements}
The work of E.D. has been partially supported by the Austrian Science Fund (FWF) through the grants F65, V 662, Y1292, and I 4052.
R.F. was partially supported by King Abdullah University of Science and Technology (KAUST) baseline funds and KAUST OSR-CRG2021-4674.


\bibliographystyle{abbrv}


%

\end{document}